\documentclass[11pt]{article}

\usepackage{amsmath, amssymb, amsthm}
\usepackage{mathrsfs} 
\usepackage{bm}       
\usepackage{bbm}      
\usepackage{hyperref} 
\usepackage{enumitem} 
\usepackage{geometry} 
\usepackage{xcolor}
\usepackage{tcolorbox}
\usepackage{tikz}
\usepackage{tikz-cd} 
\usepackage{mathtools}
\usepackage{amsthm}
\usepackage{orcidlink}


\usepackage{mhequ}
\usepackage{mathtools}


\newtheorem{theorem}{Theorem}[section]
\newtheorem{lemma}[theorem]{Lemma}
\newtheorem{proposition}[theorem]{Proposition}
\newtheorem{corollary}[theorem]{Corollary}

%

\theoremstyle{definition}
\newtheorem{definition}[theorem]{Definition}

\theoremstyle{remark}
\newtheorem{remark}[theorem]{Remark}

\newtheorem{assumption}[theorem]{Assumption}

\numberwithin{equation}{section}

\geometry{a4paper, margin=1in} 

\usepackage{titlesec}
\titleformat{\section}{\large\bfseries}{\thesection}{1em}{}
\titleformat{\subsection}{\normalsize\bfseries}{\thesubsection}{1em}{}

\usepackage{caption}
\captionsetup{font=small,labelfont=bf}

\usepackage{natbib} 
\bibliographystyle{alpha} 

\hypersetup{
    pdftitle={Your Paper Title},
    pdfauthor={Your Name},
}

\newcommand{\norm}[1]{\left\| #1 \right\|}
\def\<{\langle}
\def\>{\rangle}

\def\E{\mathbb E}
\newcommand{\tr}[1]{\ensuremath\trace[#1]}

\def\${|\!|\!|}

\def\D{\mathcal D}

\newcommand{\R}{\ensuremath{\mathbb{R}}}

\newcommand{\Z}{\ensuremath{\mathbb {Z}}}

\def\L{\mathcal L}

\newcommand{\eqL}{\ensuremath{\,{\buildrel \mathcal{L} \over =}\,}}

\DeclareMathOperator{\Div}{Div}
\DeclareMathOperator{\trace}{tr}
\DeclareMathOperator{\var}{var}
\def\f{\frac}

\def\var{\textrm{-var}}

\colorlet{darkblue}{blue!90!black}
\colorlet{darkred}{red!80!black}
\colorlet{darkgreen}{green!50!black}

\theoremstyle{plain}
\newtheorem{atheorem}{Theorem}

\theoremstyle{definition}

\usepackage{orcidlink}
\makeatletter

\makeatother

\newcommand{\email}[1]{\href{mailto:#1}{\texttt{#1}}}

\begin{document}

\title{Navier-Stokes with a fractional transport noise as a limit of multi-scale dynamics}

\author{
Xue-Mei Li$^{1,2\orcidlink{0000-0003-1211-0250}}$\;
Szymon Sobczak$^{1\orcidlink{0009-0006-7106-250X}}$
}
\maketitle
\vskip -10pt \noindent
{\footnotesize$^{1}$ Mathematics Institute, EPFL, Switzerland} \\
{\footnotesize$^{2}$ Department of Mathematics, Imperial, London, U.K }\\\
{\footnotesize\email{xue-mei.li, szymon.sobczak@epfl.ch}}

\begin{abstract}
    We define a bona fide rough path solution for the Navier-Stokes equation with an additional rough transport term, and show that the SPDE on the three-dimensional torus driven by a fractional Brownian motion on $H^\sigma$ has solutions characterised as the effective limits of a slow/fast system. We further show that this rough path solution is equivalent to the widely used incremental notion of solution (the unbounded rough driver formulation), demonstrating broader applicability to other nonlinear SPDEs.
 \end{abstract}
{\it keywords.} stochastic Navier-Stokes equation, rough paths,  multi-scales, Kramers-Smoluchowski limit, fractional Brownian motion

\tableofcontents

\section{Introduction}

Over the past decades, stochastic fluid equations driven by Brownian motion or Gaussian white noise have been extensively studied. Recently, there is growing interests  in auto-correlated long-range dependent noise (LRD), of which fractional Brownian motion (fBM) provides a natural model. With Gaussianity and self-similarity, the process remains sufficiently tractable to allow for rigorous analysis and at the same time remains relevant as they arise naturally as scaling limits. A non-Gaussian noises with similar properties are Hermite Processes. \cite{Embrechts-Maejima,Pipiras-Taqqu-book,Samorodnitsky}.

In this work, we study a slow/fast stochastic system modelling the evolution of a fluid on the 
$3$-dimensional torus. The slow component is governed by the Navier-Stokes equations, with an additional advection term through which the fast process enters the dynamics. 
Specifically, we consider the divergence free solutions of the equation
\begin{equ}\label{sf}
  \partial_t u^\epsilon = \nu \Delta u^\epsilon - \Pi[(u^\epsilon \cdot \nabla) u^\epsilon] - \epsilon^{H-1}  
   \Pi[(w^\epsilon \cdot \nabla) u^\epsilon],
\end{equ} 
where $\Pi$ denotes the Leray projection onto divergence-free vector fields.  We take for example the fast process a time-rescaled fractional Ornstein-Uhlenbeck process on $H^\sigma$, solving
\begin{equ}\label{fast}
    dw^\epsilon = -\epsilon^{-1} M w^\epsilon dt + 
    \epsilon^{-H} Q^{\f 12} dB^H.
\end{equ}
Here $H^\sigma$ denotes the space of mean-zero, divergence-free vector fields with $\sigma > 2 + \frac{3}{2}$, $M,Q$ are suitable linear operators, and $B_t^H$ is a cylindrical fBM on $H^\sigma$ with Hurst parameter $H\in (\f 13, 1)$. 

We show that, in the separation of scales limit  $\epsilon\to 0$, a subsequence of  rough path solutions, in the sense of Definition~\ref{def:RPsolution}, converges to the stochastic Navier-Stokes equation, 
\begin{equation}\label{eq:limit-eq}
 \partial_t u = \nu \Delta u- \Pi[(u \cdot \nabla) u] - \Pi[(M^{-1} Q^{\frac{1}{2}} d\mathbf B^H \cdot \nabla) u],
\end{equation}
with transport-type fractional Brownian noise. In addition we study the solution theory of rough stochastic Navier-Stokes equations driven by rough paths and show it is equivalent to the approximate solution theory in the unbounded rough path driver framework

In multi-scale analysis, when the Brownian driver is replaced by fractional BM, the situation becomes both substantially richer and more difficult. There is no suitable replacement for the Markov ergodic theory and  classical martingale method used to identify martingale limit cannot be used and often replaced by adhoc techniques \cite{Komorowski-Novikov-Ryzhik14,Pipiras-Taqqu00,Boufoussi-Tudor05,Friz-Gassiat-Lyons15,Fannjiang-Komorowski-2000,
Bourguin-Gailus-Spiliopoulos-typical,xu2025largedeviationprincipleslowfast,Eichinger_2020}.
 In a series of work, \cite{Gehringer-Li-fOU, gehringer-Li-19,gehringer-li2,volterra}, the authors studies time-homogenization problem with fast long range dependent rough noise.  By updating  standard functional limit theorem where the convergence is taken in the continuous topology and in distribution,  to a `rough functional central limit theorem',  the authors are able to access effective dynamics of slow/fast systems.  It is worth mentioning that the effective noise will depend on the Hermite rank of $G$.
  Rough path techniques are also used to study a complementary problem in \cite{Hairer:22} and in \cite{Li-Sieber-mild} for slow/fast stochastic partial differential equations driven by fBMs.
  
   Outside the homogenisation framework, useful references include \cite{fou_cheridito, infdimfOU,Nourdin-Nualart-Tudor,Nourdin-Nualart-Zintout-Rola,not-mixing}. There is also substantial literature on ergodicity and averaging for fractional dynamics, which is somewhat more remote from the present setting.
 We also note a growing line of work on stochastic fluid systems perturbed by fractional Brownian noise; e.g.\cite{ABCGG,numerical-frac, franco-frac}.


A  multiscale problem closely related (\ref{sf}) was investigated in \cite{DebPapMultiscale}, which serves as a source of inspiration for the present work; a shorter alternative proof of results in \cite{DebPapMultiscale} was later given in \cite{HofDeb23}. In these works, the fast motion $w^\epsilon$ is a family of Markov processes. In \cite{DebPapMultiscale},  the result is given at the level of probability distributions in an analytically weak sense.
In \cite{HofDeb23} the authors showed that any accumulation point of the slow motion is an Unbounded Rough Driver solution for a limit equation.  We shall return to  explain this in more detail later.

In contrast, the main objective here is to capture the rough creation in the transport term, showing that the effective limit points are Rough Path solutions with long-range dependence, in the sense of Definition~\ref{def:RPsolution} (and to establish the corresponding analytical toolbox). For this, it is sufficient to take the fast motion as an infinite-dimensional fractional Ornstein–Uhlenbeck process, a simple linear model making the crucial analysis underlying the creation of the effective transport fractional noise more transparent. We remark, that an additional drift as in \cite{HofDeb23} is not considered. Its treatment is disjoint from the transport noise component and would distract from the main focus.

\medskip

Next, we remark that two-scale fluid dynamics fits naturally within the multi-scale climate-weather modelling paradigm advocated by Klaus Hasselmann and has been studied extensively in the context of stochastic model reduction\cite{Majda:01} and in related fluid systems, e.g. \cite{Flandoli-pap2021,Flandoli-pap2022}. 
Our system fits also naturally within the framework of multiscale stochastic dynamics and time homogenisation. In this setting, it is often natural to view a stochastic differential equation as the limit of a family of differential equations with random coefficients. A classical example is Brownian motion (BM), which arises as the Kramers–Smoluchowski limit of  rescaled Ornstein–Uhlenbeck processes.

Before describe the convergence in the  slow/fast dynamics, we first explain the solution theory on the limiting  Navier-Stokes equation with transport rough noise.

 
For this, it is natural to look for weak solutions, testing against smooth functions from the Sobolev scale $H^s$. If one instead attempts to treat the equation as an RDE, a fundamental difficulty arises: the solution $u$, its Gubinelli derivative, and its rough path remainder $R^u$ do not naturally take values in the same space. This mismatch is intrinsic to the structure of the equation, since the drift terms involving spatial derivatives lower regularity. In particular, if $u$ is sought in $H^r$, the Gubinelli derivative suggested by the equation maps to $H^{r-1}$, preventing the direct application of standard rough path techniques. Testing against smooth functions circumvents this issue and allows us to work with real-valued rough paths.

For this reason, rough PDEs are often treated on a case-by-case basis. A general theory of rough PDEs was developed in \cite{Gubinelli-Tindel-10}, where a mild sewing lemma was introduced. In a similar spirit, a solution theory for certain semilinear rough PDEs was developed in \cite{Gerasimovics-Hairer}. A class of SPDEs amenable to simplification via transformations was studied in \cite{Caruana-Friz-09, Friz-Nilsssen-Stannat}. However, none accommodates the transport noise considered here. The reason is that these approaches rely on the heat semigroup to “restore” the spatial regularity lost due to the multiplicative noise term; in the transport setting, this would require regaining a full derivative, causing the estimates to break down. 

The framework of Unbounded Rough Drivers (URD), introduced in \cite{URDog, EnergymethodRPDEs},  has become the default framework for this type of noise. It has been successfully applied to fluid equations in \cite{HofDeb23, NSpertbyroughtransport} and \cite{Friz-Nilsssen-Stannat, ItoRPDE}. In particular, \cite{NSpertbyroughtransport} introduces a notion of URD solution for a Navier-Stokes equation perturbed by a rough transport noise. 
These solutions are incremental in nature.

By {\it incremental}, we mean that one does not  attempt to define a rough integral directly. Instead, one introduces suitable increment processes $(A_{s,t}^1, A_{s,t}^2)$ (the rough driver), such that for a process $u$ to be a URD solution, the remainder term must be of ``locally" finite $\f 1{3\alpha}$-variation process. In particular, a URD solution only asserts controls on sub-intervals of the time interval for which existence is established. This incremental, partition-based formulation is characteristic of the URD approach. They are reminiscent of Davie's numerical solution \cite{davie} for SDEs, useful but not totally satisfactory. 

We shall define a rough path solution theory. For this, consider 
 \begin{equation}\label{eq:modeleqn}
 du_t =\nu \Delta u_t  - \Pi [(u \cdot \nabla) u] dt-\Pi [(d{\bf Z_t} \cdot \nabla) u],
\end{equation}
where ${\bf Z} = (Z, \mathbb{Z})$ is an $\alpha$-H\"older rough path. We look for a solution satisfying a weak formulation:
 \begin{equ}             \< u_{s,t}, \phi \> = \int_s^t \<\nu \Delta u_r, \phi  \> dr - \int_s^t \< \Pi[(u_r \cdot \nabla) u_r], \phi \> dr 
            - \int_s^t \< \Pi[(d{\bf Z}_r \cdot \nabla )u_r] ,\phi \>.
        \end{equ}
The last integral is understood as a classical rough integral,  the identity holds for any $0\le~s\le~t\le~T$, and the solution is a controlled rough path, eliminating the need of the explicit remainder term in the formulation.
In comparison, a URD solution is only required  to satisfy this identity weakly on sub-intervals of the chosen partition.

\medskip

{\bf Main Results.}

We begin with the analysis of the fast component $w^\epsilon$, defined on $H^\sigma$ for $\sigma> 2 + \frac{d}{2}$. (This choice of $\sigma$ ensures the Sobolev embedding $H^\sigma \hookrightarrow W^{2,\infty}$.) Set  
 \begin{equ}\label{X}
      X_{s,t}^\epsilon=\epsilon^{H-1} \int_s^t  w_r^\epsilon\, dr  \end{equ}
    and denote by $X^{\epsilon, i}$ the projection of $X^{\epsilon}$ onto the $i$th eigenvector of $Q$. We then define
     \begin{equ} \label{XX} \mathbb{X}_{s,t}^{\epsilon,i,j} = \int_s^t X^{\epsilon, i}_{s,r} \otimes 
      dX^{\epsilon, j}_{r}.
     \end{equ} 

The first result establishes the rough functional limit theorem underlying our main claim.

\begin{atheorem}\label{thm:A}
 Assume that $M,Q$ satisfy Assumption \ref{assumption}.   Let  $H\in \left(\frac{1}{3}, \frac{1}{2}\right)$,  $q\in (1, \infty)$,  $\sigma>2+ \frac{3}{2}$, and  $\frac{1}{3}<\alpha<H$.  
    Then $(X^{\epsilon}_{\cdot}, \mathbb{X}^{\epsilon}_{ \cdot\cdot})\to  (M^{-1} Q^{\frac{1}{2}} B^H,\eta)$ in $L^{q}\bigl(\Omega;\mathcal{C}^{\alpha}([0,T];H^{\sigma})
\times \mathcal{C}^{2\alpha}(\Delta_T;H^{\sigma}\otimes H^{\sigma})\bigr)$
where $\eta$ denotes the canonical Gaussian lift.
\end{atheorem}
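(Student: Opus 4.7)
The plan is to exploit the linearity of the fast equation~\eqref{fast} to extract the limit explicitly. Integrating \eqref{fast} from $s$ to $t$, solving for the time integral of $w^\epsilon$, and multiplying by $\epsilon^{H-1}$ produces the pathwise identity
\begin{equation*}
  X^\epsilon_{s,t} \;=\; M^{-1}Q^{\tfrac12}\bigl(B^H_t-B^H_s\bigr)+\epsilon^H M^{-1}\bigl(w^\epsilon_s-w^\epsilon_t\bigr)\;=:\;\Xi_{s,t}+R^\epsilon_{s,t}.
\end{equation*}
Under Assumption~\ref{assumption} the fOU process $w^\epsilon$ is stationary on $H^\sigma$ with moments bounded uniformly in $(t,\epsilon)$, so $\mathbb{E}\|R^\epsilon_{s,t}\|_{H^\sigma}^q\lesssim \epsilon^{Hq}$. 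On the other hand, reinserting the SDE yields the short-scale bound $\mathbb{E}\|w^\epsilon_t-w^\epsilon_s\|_{H^\sigma}^q\lesssim \epsilon^{-Hq}|t-s|^{Hq}$, hence $\|R^\epsilon_{s,t}\|_{H^\sigma}\lesssim \min(\epsilon^H,|t-s|^H)$ in $L^q$. Balancing at $|t-s|=\epsilon$ gives $\mathbb{E}\|R^\epsilon_{s,t}\|_{H^\sigma}^q\lesssim \epsilon^{(H-\alpha)q}|t-s|^{\alpha q}$ for every $\alpha\in(\tfrac13,H)$, and the Garsia--Rodemich--Rumsey lemma upgrades this to $L^q(\Omega;\mathcal{C}^\alpha([0,T];H^\sigma))$-convergence $X^\epsilon\to\Xi$ at rate $\epsilon^{H-\alpha}$.

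For the second level, since $X^\epsilon$ is absolutely continuous with $\dot X^\epsilon_r=\epsilon^{H-1}w^\epsilon_r$, the integral~\eqref{XX} is a classical Riemann integral, and as a product of two elements of the first Wiener chaos of $B^H$ it lies in the second chaos. By hypercontractivity (Nelson--Gross), moments of all orders are controlled by the second moment, so the plan is to compute directly
\begin{equation*}
  \mathbb{E}\bigl\|\mathbb{X}^\epsilon_{s,t}-\eta_{s,t}\bigr\|_{H^\sigma\otimes H^\sigma}^2
\end{equation*}
via the Wiener--It\^o isometry, which reduces the error to a double integral in $(r,r')\in[s,t]^2$ of the fOU Green function against the fBM covariance kernel. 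A dyadic decomposition of the square into a short-range regime $|r-r'|\lesssim\epsilon$ (where the exponential decay of the fOU Green function provides the gain) and a long-range regime $|r-r'|\gtrsim\epsilon$ (where the polynomial fBM covariance controls the oscillations), combined with the same interpolation used in the first level, should yield a bound of the form $\epsilon^{2(H-\alpha)}|t-s|^{4\alpha}$. Garsia--Rodemich--Rumsey then promotes this to $L^q(\Omega;\mathcal{C}^{2\alpha}(\Delta_T;H^\sigma\otimes H^\sigma))$-convergence, and the limit is identified with the Friz--Victoir canonical Gaussian lift $\eta$ of $\Xi$, whose existence and $\mathcal{C}^{2\alpha}$-regularity follow from the finite 2D $\rho$-variation of the fBM covariance when $H>\tfrac13$.

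The principal obstacle is the second-level variance computation: the integrand couples the short-range fOU correlations (scale $\epsilon$) with the long-range correlations of $B^H$, and the small-$\epsilon$ gain emerges only after a careful decoupling of the two time parameters via the dyadic splitting described above; in addition the estimates must be kept uniform in the Sobolev parameter $\sigma$ and in $(s,t)\in\Delta_T$. A secondary, more structural point is to verify that the smooth classical iterated integral of $X^\epsilon$ converges to the \emph{same} second-chaos object produced by mollifying $\Xi$; since both objects are Wick polynomials of degree two in the common Gaussian family $B^H$, this reduces to a direct covariance computation, after which uniqueness of the Gaussian lift closes the identification.
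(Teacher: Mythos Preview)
Your first-level argument matches the paper's Proposition~\ref{prop:first-order} almost exactly: same key identity $X^\epsilon_{s,t}=M^{-1}Q^{1/2}B^H_{s,t}-\epsilon^H M^{-1}w^\epsilon_{s,t}$, same interpolation between the uniform bound and the Hölder bound on the fOU, same Kolmogorov-type upgrade.

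For the second level you take a genuinely different route. The paper never computes $\mathbb{E}\|\mathbb{X}^\epsilon_{s,t}-\eta_{s,t}\|^2$ directly. Instead it invokes the Gaussian rough path machinery of Friz--Hairer (Theorem~\ref{thm:FH10.5} here): rough path convergence is controlled entirely by the two-dimensional $\rho$-variation of the covariances $R_{X^{\epsilon,i}}$, $R_{B^i}$, and $R_{X^{\epsilon,i}-B^i}$. These are then bounded via the Coutin--Qian criterion (Theorem~\ref{thm:CQ}), using the asymptotic $\nu^i(s)\sim\lambda_i s^{2H-2}$ for the fOU correlation. The crucial smallness for $R_{X^{\epsilon,i}-B^i}$ comes from the identity $X^\epsilon-B=-\epsilon^H M^{-1}w^\epsilon$ together with a recent result of Qian--Xu (2024) giving directly $\|R_{w^i}\|_{1/(2H)\text{-var}}\lesssim\lambda_i|t-s|^{2H}$, followed by an interpolation against the uniform bound $\|R_{X^{\epsilon,i}-B^i}\|_\infty\lesssim\epsilon^{2H}\lambda_i$ to extract the $\epsilon$-rate.

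Your direct Wiener--Itô approach is more elementary in spirit and avoids the abstract 2D-variation framework, but the paper explicitly flags why it chose not to go that way: since $H<\tfrac12$, the kernel $|r-r'|^{2H-2}$ is non-integrable, and ``direct computation of the covariance with overlapping integration areas is difficult to implement.'' Your dyadic short-range/long-range splitting would hit exactly this singularity, and the proposal does not say how you intend to control the diagonal contribution; this is precisely the work that the Qian--Xu input does for the paper. So your plan is not wrong, but the ``principal obstacle'' you name is real and substantially harder than the sketch suggests, whereas the paper's route outsources it to existing Gaussian rough path technology at the cost of importing that machinery.
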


Our main theorem is the following, proved in Section~\ref{sec:thmB}.
\begin{atheorem} \label{thm:B}
   Assume Assumption \ref{assumption}. Let $\{ u^\epsilon\}$ be a family of rough path solutions to  (\ref{sf}) with initial data $u_0^\epsilon$ satisfying 
    $\sup_{\epsilon>0}\norm{u^\epsilon_0}_{H} <\infty$.   Then there exists a subsequence $\{ u^{\epsilon_n}\}$ such that $u^{\epsilon_n} \rightarrow u$ in  $L^{2}([0,T];H) \cap C([0,T],H_{w})$. 
    
    Furthermore, the limit is a rough path solution to \eqref{eq:limit-eq}
   and the convergence holds in the controlled rough path topology: 
        \begin{equs}
         u^{\epsilon_n} & \rightarrow u  \; \text{ in } C^{\frac{1}{\alpha-\delta} \var}([0,T]; H^{-1});\qquad 
         R^{u^{\epsilon_n}} &\rightarrow R^u, \;  \text{ in } C^{\frac{1}{2(\alpha-\delta)}\var}([0,T]; H^{-2})
        \end{equs}
        for any $\delta>0$.
        \end{atheorem}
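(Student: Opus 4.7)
The plan is to combine the rough functional limit theorem (Theorem~\ref{thm:A}) with uniform energy estimates for the slow component $u^\epsilon$ and a compactness/identification argument adapted to the controlled rough path setting.

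\textbf{Step 1: Uniform a priori estimates.} First I would exploit the divergence-free structure of $w^\epsilon$: since $\Pi$ projects onto divergence-free fields and $w^\epsilon$ itself is divergence free, the transport perturbation $\Pi[(w^\epsilon\cdot\nabla)u^\epsilon]$ is $L^2$-orthogonal to $u^\epsilon$. Consequently, the standard Leray energy identity
\[
\|u^\epsilon_t\|_{L^2}^2 + 2\nu\int_0^t \|\nabla u^\epsilon_r\|_{L^2}^2\,dr = \|u_0^\epsilon\|_{L^2}^2
\]
holds for rough path solutions (justifying this identity at the rough path level is the one place that genuinely uses the solution concept of Definition~\ref{def:RPsolution} rather than just the URD formulation). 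This gives uniform bounds $u^\epsilon\in L^\infty([0,T];H)\cap L^2([0,T];H^1)$.

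\textbf{Step 2: Uniform controlled rough path estimates.} Since $u^\epsilon$ is a rough path solution with driver $X^\epsilon$, its natural Gubinelli derivative in the $i$-th eigendirection of $Q$ is $(u^\epsilon)^{\prime,i} = -\Pi[(e_i\cdot\nabla)u^\epsilon]$, taking values in $H^{-1}$. Using Theorem~\ref{thm:A} together with the equation and the Step 1 bounds, I would derive uniform bounds on the remainder $R^{u^\epsilon}$ in $C^{2\alpha}_2([0,T];H^{-2})$ and on $(u^\epsilon)'$ in $C^\alpha([0,T];H^{-1})$. The key input here is the rough path estimate from Theorem~\ref{thm:A}, which supplies a uniform-in-$\epsilon$ bound on the $\alpha$-H\"older norm of $(X^\epsilon,\mathbb X^\epsilon)$ in the appropriate $H^\sigma\otimes H^\sigma$ space.

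\textbf{Step 3: Compactness and weak convergence.} From the $L^2_tH^1_x$ bound and a $C^\alpha([0,T];H^{-2})$ bound extracted from Step 2, an Aubin–Lions argument gives relative compactness of $\{u^\epsilon\}$ in $L^2([0,T];H)$, while the uniform $L^\infty_tH$ bound and equicontinuity in a weak topology yields compactness in $C([0,T];H_w)$. Extract a subsequence $u^{\epsilon_n}\to u$ in both topologies.

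\textbf{Step 4: Identification of the limit.} Fix a smooth divergence-free test function $\phi$. The scalar process $\langle u^\epsilon,\phi\rangle$ is then a genuine (finite dimensional) controlled rough path with respect to $X^\epsilon$, the Gubinelli derivative being $\langle -\Pi[(e_i\cdot\nabla)u^\epsilon],\phi\rangle=\langle u^\epsilon, (e_i\cdot\nabla)\phi\rangle$, which converges strongly by Step~3 since it only costs one derivative on the smooth $\phi$. Combining this with the rough path convergence of Theorem~\ref{thm:A} and the standard continuity of the rough integral in the joint (controlled path, rough driver) topology, I pass to the limit in the weak formulation
\[
\langle u^\epsilon_{s,t},\phi\rangle = \int_s^t\langle\nu\Delta u^\epsilon_r-\Pi[(u^\epsilon_r\cdot\nabla)u^\epsilon_r],\phi\rangle\,dr - \int_s^t\langle\Pi[(d\mathbf X^\epsilon_r\cdot\nabla)u^\epsilon_r],\phi\rangle,
\]
identifying the limit $u$ as a rough path solution of \eqref{eq:limit-eq}. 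The nonlinear drift passes to the limit by the strong $L^2_{t,x}$ convergence from Step~3.

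\textbf{Step 5: Upgrade to convergence in the controlled rough path topology.} Having identified the limit, interpolate between the uniform $\alpha$-H\"older bounds obtained in Step~2 and the strong convergence in $L^2([0,T];H)$ from Step~3. The controlled rough path structure passes to the limit: $u^{\epsilon_n}$ converges to $u$ in $C^{1/(\alpha-\delta)\var}([0,T];H^{-1})$ and the remainder $R^{u^{\epsilon_n}}\to R^u$ in $C^{1/(2(\alpha-\delta))\var}([0,T];H^{-2})$ for any $\delta>0$, by standard compactness-plus-uniqueness-of-limits arguments in H\"older/variation interpolation.

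\textbf{Main obstacle.} The delicate step is Step~4: the rough integral $\int\langle\Pi[(d\mathbf X^\epsilon\cdot\nabla)u^\epsilon],\phi\rangle$ involves a product whose two factors ($X^\epsilon$ and $u^\epsilon$) are only weakly-in-probability (resp. weakly in $H$) convergent in the natural topology, and require the rough path lift $\mathbb X^\epsilon$ to give meaning to the cross term. Controlling this term requires matching the controlled rough path estimates on $u^\epsilon$ (in negative Sobolev spaces, due to the derivative loss in the transport term) with the infinite-dimensional rough path topology of Theorem~\ref{thm:A}, and showing the limit Gubinelli derivative of $\langle u,\phi\rangle$ is exactly $\langle u,(e_i\cdot\nabla)\phi\rangle$.
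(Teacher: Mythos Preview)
Your outline follows essentially the same route as the paper's proof (Section~\ref{sec:thmB}, Theorem~\ref{general-ThmB}), but there is one genuine logical gap and one place where your regularity bookkeeping would fail.

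\textbf{The ordering of Steps 4 and 5 is circular.} To invoke continuity of the rough integral in Step~4 you need convergence of the full controlled structure of the tested integrand, i.e.\ of $(y^{\epsilon_n},(y^{\epsilon_n})',R^{y^{\epsilon_n}})$ in the controlled rough path norm, not just pointwise convergence of the Gubinelli derivative $\langle u^{\epsilon_n},(e_i\cdot\nabla)\phi\rangle$. Since $R^{y^{\epsilon_n}}_{s,t} = -\langle \Pi[(\bullet\cdot\nabla)R^{u^{\epsilon_n}}_{s,t}],\phi\rangle$ (cf.\ the proof of Proposition~\ref{rem:distrvstested}), this requires $R^{u^{\epsilon_n}}\to R^u$ in $C^{\frac{1}{2\alpha}\var}_2 H^{-2}$ first --- which is exactly your Step~5. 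The paper resolves this by doing your Step~5 \emph{before} your Step~4: uniform variation bounds plus uniform convergence in $H^{-1}$ (resp.\ $H^{-2}$) give, via the interpolation Lemma~\ref{lem:pvarcptness}, convergence of $u^{\epsilon_n}$ and $R^{u^{\epsilon_n}}$ in the slightly weaker variation spaces; only then is the rough integral passed to the limit. Note also that Step~5 does not need the limit to be identified, contrary to what you write: it only needs uniform bounds (Step~2) and uniform convergence (Step~3).

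\textbf{H\"older versus $p$-variation in Step~2.} You claim uniform bounds on $R^{u^\epsilon}$ in $C^{2\alpha}_2$ and on $(u^\epsilon)'$ in $C^\alpha$. These H\"older bounds will not hold: the drift control $\omega_\mu(s,t)=\int_s^t(1+\|u^\epsilon_r\|_{H^1})^2\,dr$ is a genuine control, not comparable to $|t-s|$, so the solution only lies in $C^{\frac{1}{\alpha}\var}_T H^{-1}$ and the remainder in $C^{\frac{1}{2\alpha}\var}_{2,T} H^{-2}$. The paper obtains these variation bounds via the URD a~priori machinery of Section~\ref{sec:rough_setting} (Propositions~\ref{prop:residualest}--\ref{prop:globalremainderest}), which is the technical heart of the argument and should be cited rather than rederived.

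A minor point on Step~1: for fixed $\epsilon$ the transport coefficient $w^\epsilon$ is smooth in time, so the energy identity is classical (the paper uses Galerkin approximation, Lemma~\ref{energyestimate}) and no rough-path justification is needed.
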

     We actually showed that if a rough path $\mathbf Z^\epsilon$ converges to $\mathbf Z$ in the rough topology, the above theorem remains true with \eqref{eq:limit-eq} replaced by \eqref{eq:modeleqn}, see Theorem~\ref{general-ThmB}, concluding that \eqref{eq:modeleqn} has a rough path solution.
      
    To interpret this limit, we introduce the notion of a rough path solution.
  
\begin{definition} \label{def:RPsolution}
  A stochastic process  $u\in L^{2}_TH^1 \cap C_TH_w \cap C^{\frac{1}{\alpha}\var}_TH^{-1}$
    is  a {\it rough path solution} of \eqref{eq:modeleqn} if the remainder
     $R^u_{s,t} := u_{s,t} - \left[ -\Pi \left( Z_{s,t} \cdot \nabla u_s \right) \right]$ belongs to $C^{\frac{1}{2\alpha}\var}_{2,T}H^{-2}$, and if, for all $\phi \in H^3$ and for all $0\le s\le t\le T$,
        \begin{equ} \label{eq:weak_formulation}
            \< u_{s,t}, \phi \> = \int_s^t \<\nu \Delta u_r, \phi  \> dr - \int_s^t \< \Pi[(u_r \cdot \nabla) u_r], \phi \> dr 
            - \int_s^t \< \Pi[(d{\bf Z}_r \cdot \nabla )u_r] ,\phi \>.
        \end{equ}
\end{definition}

A difficulty in this definition is to make sense of the last integral term $\int_s^t \< \Pi[(d{\bf Z}_r \cdot \nabla )u_r] ,\phi \>$.
To identify this rough integral, define
$$y_t:=\< \Pi[(\bullet \cdot \nabla )u_t] ,\phi \>,  \qquad y'_t := \< \Pi \left[ (\bullet \cdot \nabla) \Pi \left[ (\star \cdot \nabla)u_t \right]\right], \phi \>.$$
By Proposition~\ref{rem:distrvstested} and Lemma~\ref{u-is-rough-path},  the pair $(y, y')$ defines a rough path controlled by ${\bf Z}$ and that the integral is defined via the sewing lemma as the limit of enhanced Riemann sums
$$ \lim_{|\pi|\to 0} \sum_{[s', t']\in \pi}  y_{s'}Z_{s',t'} + y'_{s'} \mathbb{Z}_{s',t'}. $$

We now observe that the terms in this sum coincide with
$$ \<A^1_{s,t}u_s, \phi \> = y_s Z_{s,t}, \qquad  \<A^2_{s,t}u_s, \phi \> = y'_s \mathbb{Z}_{s,t},$$
where $A^1$ and $A^2$ are defined from the maps $A^{(1)}$ and $A^{(2)}$ as follows. First set
   $$ A^{(1)}( \cdot, u) : h \mapsto  -\Pi(h \cdot \nabla )u, $$ 
 and 
 $$   A^{(2)}(\cdot,\phi): f \otimes g\mapsto \Pi(g \cdot \nabla \Pi(f \cdot \nabla \phi)).$$
 The operator-valued increments are then given by
\begin{equ}\label{As}
    A^{1}_{s,t} = A^{(1)}(Z_{s,t}, \cdot ), \qquad 
    A^{2}_{s,t} = A^{(2)}(\mathbb{Z}_{s,t}, \cdot ).
\end{equ}

\begin{definition}\label{def:URD}\cite{URDog, EnergymethodRPDEs}
    Let $\alpha\in(\f 13,\f 12]$. An $\alpha$-continuous unbounded rough driver with respect to the scale $(H^{m})_{m>0}$ is a pair $(A^{1},A^{2})$ 
    of two index maps such that for any $s,t\in [0,T]$:
    \begin{align*}
        \norm{A^{1}_{s,t}}_{\mathcal{L}(H^{-m};H^{-(m+1)})} &\lesssim |t-s|^{\alpha}, \qquad m\in [0,2], \\
        \norm{A^{2}_{s,t}}_{\mathcal{L}(H^{-m};H^{-(m+2)})} &\lesssim |t-s|^{2\alpha}, \qquad
        m\in [0,1],
    \end{align*}
    and satisfies the Chen's relations: 
        $\delta A^{1}_{s,r,t}=0$ and $ \delta A^{2}_{s,r,t}=A^{1}_{r,t}A^{1}_{s,r}$  for all $ 0\leq s \leq r \leq t \leq T$.
\end{definition}

\begin{definition} \label{def:URDsoln}
A process $u\in L^{2}_TH^1 \cap C_TH_w$ together with the $\alpha$-continuous unbounded rough driver $(A^1, A^2)$ on the scale $H^r$
 is said to be an {\it Unbounded Rough Driver  (URD) solution} to \eqref{eq:modeleqn}, if 
  there exists a covering $\mathcal P=\{[s_k,t_k]\}$ of $[0,T]$ such that for any~$k$,
        the residual,
        \begin{equation}\label{eq:URDweak_formulation}
            u^\natural_{s,t} := u_{s,t} - \int_{s}^{t} \nu \Delta u_r dr + \int_s^t \Pi(u_r\cdot \nabla u_r) dr - 
             A^1_{s,t}u_s - A^2_{s,t}u_s, \qquad s_k\le s\le t\le t_k,
        \end{equation}
        belongs to the finite variation space $ C_2^{\f 1{3\alpha}\var}([s_k, t_k];H^{-3})$ of two parameter processes.
\end{definition}

This notion of URD solution has been used in e.g. \cite{EnergymethodRPDEs,HofDeb23}. URD solutions are often 
formulated in the terminology of local finite $\frac{1}{3\alpha}$-variation;  which precisely mean that  there exists a covering $\{I_k\}$ of $[0,T]$ such that $u^\natural \in C_2^{\frac{1}{3\alpha}\var}(I_k;H^{-3})$.

The following result is proved in Section~\ref{sec:rough_setting}.

\begin{atheorem} \label{thm:equivalence-of-sol}
Let ${\bf Z} = (Z, \mathbb{Z})$ be an $\alpha$-H\"older rough path on $H^\sigma$. Then $u$ is a rough path solution of \eqref{eq:modeleqn} if and only if it is a URD solution with respect to the driver $(A^1, A^2)$ in \eqref{As}. In particular, this URD solution
enjoys the global property:
$$u^\natural \in C_2^{\f 1{3\alpha}\var}([0,T];H^{-3}).$$
\end{atheorem}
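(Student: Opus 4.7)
The argument hinges on identifying, for every test function $\phi \in H^{3}$, the URD residual $\<u^\natural_{s,t},\phi\>$ with the sewing remainder of the rough integral $\int_s^t \<\Pi[(d\mathbf Z_r\cdot\nabla)u_r],\phi\>$ appearing in Definition~\ref{def:RPsolution}. Once this identification is in place, the equivalence becomes essentially tautological, and the global $\f 1{3\alpha}\var$ bound on $u^\natural$ falls out for free from the global character of the sewing lemma.

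Fix $\phi\in H^{3}$ and set
\[
y_t := \<\Pi[(\bullet\cdot\nabla)u_t],\phi\>,\qquad y'_t := \<\Pi[(\bullet\cdot\nabla)\Pi[(\star\cdot\nabla)u_t]],\phi\>.
\]
By Proposition~\ref{rem:distrvstested} and Lemma~\ref{u-is-rough-path}, the pair $(y,y')$ is an $\alpha$-controlled rough path above $\mathbf Z$, with controlled-path norm linear in $\phi$ and dominated by $\|\phi\|_{H^{3}}$ times a constant that depends only on the controlled-rough-path data of $u$. The sewing lemma then delivers a unique rough integral $t\mapsto\int_0^t y\,d\mathbf Z$ whose remainder
\[
\rho^\phi_{s,t} := \int_s^t y_r\,d\mathbf Z_r - y_s Z_{s,t} - y'_s\mathbb Z_{s,t}
\]
belongs to $C^{\f 1{3\alpha}\var}_2([0,T];\mathbb R)$, uniformly in $\phi$ with respect to $\|\phi\|_{H^{3}}$. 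Combined with the identifications $\<A^1_{s,t}u_s,\phi\>=y_s Z_{s,t}$ and $\<A^2_{s,t}u_s,\phi\>=y'_s\mathbb Z_{s,t}$ recorded after~(\ref{As}), this is the only analytic ingredient needed.

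For the implication rough path $\Rightarrow$ URD, substitute~(\ref{eq:weak_formulation}) into~(\ref{eq:URDweak_formulation}): the heat and Navier--Stokes drift contributions cancel, and the identifications above eliminate the linear Taylor terms, leaving $\<u^\natural_{s,t},\phi\> = -\rho^\phi_{s,t}$ for all $0\le s\le t\le T$ and all $\phi\in H^{3}$. Taking the supremum over $\|\phi\|_{H^{3}}\le 1$ together with the uniform bound on $\rho^\phi$ yields $u^\natural \in C^{\f 1{3\alpha}\var}_2([0,T];H^{-3})$ \emph{globally}, proving simultaneously the URD property (on the trivial covering $\mathcal P = \{[0,T]\}$) and the global variation bound stated in the theorem. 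For the converse, let $u$ be a URD solution with covering $\{[s_k,t_k]\}$. On each sub-interval, testing~(\ref{eq:URDweak_formulation}) against $\phi\in H^{3}$ and applying the identifications above rearranges to
\[
\<u_{s,t},\phi\> - \int_s^t \<\nu\Delta u_r - \Pi(u_r\cdot\nabla)u_r, \phi\>\,dr = y_s Z_{s,t} + y'_s\mathbb Z_{s,t} + \<u^\natural_{s,t},\phi\>.
\]
Because $(y,y')$ is $\mathbf Z$-controlled and $\<u^\natural,\phi\>\in C^{\f 1{3\alpha}\var}_2$, the uniqueness clause of the sewing lemma forces the right-hand side to coincide with $\int_s^t \<\Pi[(d\mathbf Z_r\cdot\nabla)u_r],\phi\>$ on $[s_k,t_k]$. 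Concatenation across the covering, using additivity of all three $(s,t)$-terms, promotes~(\ref{eq:weak_formulation}) to all of $[0,T]$; applying the first direction to this newly obtained rough path solution then delivers the global variation bound.

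The principal difficulty is securing the uniform control of the controlled-rough-path norm of $(y,y')$ by $\|\phi\|_{H^{3}}$: without such linear-in-$\phi$ bounds the pointwise sewing estimate cannot be upgraded to an $H^{-3}$-valued variation bound on $u^\natural$, and the two notions of solution would not be comparable globally. This uniform Gubinelli-derivative estimate is exactly what Proposition~\ref{rem:distrvstested} and Lemma~\ref{u-is-rough-path} provide, by controlling the remainder of $y$ in terms of Sobolev norms of $u$, $\phi$, and $\mathbf Z$.
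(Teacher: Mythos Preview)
Your forward direction (rough path $\Rightarrow$ URD) is correct and matches the paper's Proposition~\ref{pro:Rough path solutions are URD solutions} essentially verbatim: the sewing remainder of the rough integral coincides with $\<u^\natural,\phi\>$, and the linear-in-$\phi$ bound from Proposition~\ref{rem:distrvstested} upgrades this to an $H^{-3}$ statement globally.

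The converse, however, has a real gap. You invoke Proposition~\ref{rem:distrvstested} and Lemma~\ref{u-is-rough-path} to assert that $(y,y')$ is $\mathbf Z$-controlled, but both of those results take as \emph{hypothesis} that $R^u \in C^{\frac{1}{2\alpha}\var}_{2,T}H^{-2}$ (equivalently, that $u$ is already a rough path solution). A URD solution, by Definition~\ref{def:URDsoln}, is only assumed to lie in $L^2_TH^1\cap C_TH_w$ with $u^\natural$ of local finite $\frac{1}{3\alpha}$-variation in $H^{-3}$; it carries no a priori information about $u$ in $C^{\frac{1}{\alpha}\var}_TH^{-1}$ or about $R^u$ in $C^{\frac{1}{2\alpha}\var}_{2,T}H^{-2}$. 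Without those, the controlled-path structure of $(y,y')$ is not available, and your appeal to the uniqueness clause of the sewing lemma is circular.

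The paper closes this gap with the a-priori estimates of Section~\ref{sec:driver-regularity} (Propositions~\ref{prop:residualest}--\ref{prop:globalremainderest}), which use a smoothing decomposition $J^\eta+(I-J^\eta)$ to trade spatial for temporal regularity and deduce, from the URD data alone, that $u\in C^{\frac{1}{\alpha}\var}([0,T];H^{-1})$ and $R^u\in C^{\frac{1}{2\alpha}\var}_2([0,T];H^{-2})$ globally. Only after these bounds are in place does the paper's Proposition~\ref{pro:URD solutions are rough path solutions} run the sewing-uniqueness argument you sketch. This analytic step is the actual ``principal difficulty'' in the URD $\Rightarrow$ RP direction; your proposal identifies the wrong obstacle.
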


\medskip

{\bf Acknowledgement.}
Li acknowledges support by Swiss National Science Foundation project mint 10000849 and  UKRI  grant
EP/V026100/1.  The authors benefited from the support from Mathematics of Physics NCCR SWISSMAP.

\section{Preliminaries} \label{sec:preliminaries}
We denote
by $\mathcal{L}(H,K)$ the space of bounded linear operators between Hilbert spaces, and by $\L_2(H,K)$ the space of Hilbert-Schmidt 
operators. 
The notation $L^p$ refers to the usual $L^p$ spaces. Similarly, the notation $L^p_TV$ denotes $L^p([0, T]; V)$ and the notation 
$C_TV$ denotes $C([0,T];V)$. Finally, when we write $H_w$ we mean the space $H$ equipped with the weak topology, instead of the norm 
topology.

Let $\Pi: L^2\to L^2_{\Div}$ denote the Leray projection from the $L^2$ space of vector fields to the $L^2$ space of divergence free vector fields.
The Stokes operator $ \nu \Delta \Pi$ generates an $L^2$-based scale of interpolation spaces, defined by 
$H^r=\D((-\Delta \Pi)^r)$ equipped with the graph norm and inner product  $\norm{u}_{H^{r}}=\norm{(-\Delta \Pi)^{\frac{r}{2}}u}$. On the Torus, these spaces are equivalent to the usual Sobolev spaces of divergence free vector fields.

 Let us recall, that
the Navier-Stokes nonlinearity 
$$b(u,v)= -\Pi[(u \cdot \nabla) v] $$ 
 is skew symmetric in the following sense:
$\langle b(u,v),w \rangle = -\langle b(u,w),v \rangle$,
which is obtained by integration by parts, whenever the inner products are well-defined. This yields the identity:
$\langle b(u,v),v \rangle = 0$ crucial for energy estimates.
It satisfies \cite{Boyer-Fabrie-13-book,Sohr-book, Batchelor}: 
$$b\in \L(H^{\alpha_1} \otimes H^{\alpha_2 +1} ,  H^{-\alpha_3}),$$
for any  $\alpha_1,  \alpha_2,  \alpha_3 \geq 0$ satisfying the following condition:
\begin{equation} \label{eq:b_properties}
    \alpha_1+\alpha_2+\alpha_3
    \begin{cases}
        \geq \frac{d}{2}, & \text{whenever } \alpha_i \neq \frac{d}{2} \text{ for any } i,\\
        > \frac{d}{2}, & \text{if } \alpha_i = \frac{d}{2} \text{ for some } i.
    \end{cases}
\end{equation}

\subsection{Controls and rough path spaces}\label{sec:rp}

We work primarily in $p$-variation spaces, and use controls as the main bookkeeping device in a-priori bounds, and occasionally appeal to the H\"older rough path and controlled path viewpoints. This subsection collects the basic tools used repeatedly in Sections~\ref{sec:rough_setting}–\ref{sec:thmB}.

\subsubsection*{Variation spaces and interpolation}

For an interval $I\subset\mathbb{R}$ set
$$
\Delta_I := \{(s,t)\in I^2:\ s\le t\}.
$$
Let $C(\Delta_I;V)$ denote the space of continuous maps $\Delta_I\to V$.

We define the space of continuous two–parameter maps of finite $p$-variation by
$$
C_{2}^{p\var}(I;V)
:= \Big\{A\in C(\Delta_I;V):\ 
\norm{A}_{C_{2}^{p\var}(I;V)}<\infty\Big\},
$$
where
$$
\norm{A}_{C_{2}^{p\var}(I;V)}
:=\sup_{\pi}\Big(\sum_{[u,v]\in\pi}\norm{A_{u,v}}_V^p\Big)^{1/p},
$$
and the supremum runs over partitions $\pi$ of $I$.

A path $g:I\to V$ induces a two–parameter map $\delta g_{s,t}=g_t-g_s$, and
$$
\norm{g}_{C^{p\var}(I;V)}=\norm{\delta g}_{C_{2}^{p\var}(I;V)}.
$$

The following interpolation inequality will be used repeatedly.

\begin{proposition}\cite[Prop.~5.5]{FVroughpaths}
Let $f\in C^{p\var}(I;V)$ and $1\le p<p'<\infty$. Then
$$
\norm{f}_{C^{p'\var}(I;V)}
\le
\norm{f}_{C^{p\var}(I;V)}^{p/p'}
\norm{f}_{\infty}^{1-p/p'}.
$$
\end{proposition}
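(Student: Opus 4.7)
The plan is to prove the bound on each subinterval of an arbitrary partition via a H\"older-type factorization, and then take the supremum. Concretely, fix a partition $\pi=\{u_0<u_1<\cdots<u_N\}$ of $I$. On each subinterval I would split
$$
\norm{\delta f_{u_i,u_{i+1}}}_V^{p'}
=\norm{\delta f_{u_i,u_{i+1}}}_V^{p}\cdot\norm{\delta f_{u_i,u_{i+1}}}_V^{p'-p}.
$$
The first factor is exactly the summand appearing in the $p$-variation seminorm. The second factor is controlled uniformly in $i$ by $\sup_{s,t\in I}\norm{f_t-f_s}_V^{p'-p}$, which is in turn bounded by (a constant multiple of) $\norm{f}_\infty^{p'-p}$.

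Pulling the uniform bound out of the sum yields
$$
\sum_{i}\norm{\delta f_{u_i,u_{i+1}}}_V^{p'}
\;\le\; \norm{f}_\infty^{p'-p}\sum_{i}\norm{\delta f_{u_i,u_{i+1}}}_V^{p}
\;\le\; \norm{f}_\infty^{p'-p}\,\norm{f}_{C^{p\var}(I;V)}^{p}.
$$
Taking the supremum over $\pi$ and then the $1/p'$-th power gives the interpolation inequality. Note that the exponents $p/p'$ and $1-p/p'$ sum to $1$, which is the homogeneity check that any such estimate must pass; this is automatic from the exponent arithmetic $p+(p'-p)=p'$ followed by division by $p'$.

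There is no real obstacle: the argument is a one-line consequence of pulling a uniform factor out of the variation sum. The only point requiring care is the convention for $\norm{f}_\infty$ (supremum of the path $\sup_t\norm{f_t}_V$ versus supremum of the increments $\sup_{s,t}\norm{f_t-f_s}_V$), which determines whether an absolute constant appears on the right-hand side. I would simply adopt the convention of the cited reference \cite{FVroughpaths} so that the inequality holds exactly as stated, and in applications later in the paper this constant is immaterial since interpolation is used to convert between variation exponents up to a fixed prefactor.
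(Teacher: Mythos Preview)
Your argument is correct and is exactly the standard proof (as in the cited reference): the paper itself does not supply a proof of this proposition but simply quotes it from \cite{FVroughpaths}. Your observation about the convention for $\norm{f}_\infty$ is also apt; in \cite{FVroughpaths} the relevant quantity is the oscillation $\sup_{s,t}\norm{f_t-f_s}_V$, under which the inequality holds without an extra constant, and in the paper's applications (e.g.\ Lemma~\ref{lem:pvarcptness}) it is applied to differences $f^n-f$ where this distinction is immaterial.
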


In particular, for $p\le p'$ one has the continuous embedding
$$
C_{2}^{p\var}(I;V)\hookrightarrow C_{2}^{p'\var}(I;V).
$$

The following compactness fact will be crucial in Section~\ref{sec:thmA}.

\begin{lemma}\label{lem:pvarcptness}
Let $\{f^n\}\subset C(I;V)$ with
$\sup_n\norm{f^n}_{C^{p\var}(I;V)}<\infty$.
If $f^n\to f$ uniformly, then $f^n\to f$ in $C^{p'\var}(I;V)$ for every $p'>p$.
\end{lemma}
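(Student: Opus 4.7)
The plan is to reduce the lemma to the interpolation inequality just quoted, applied to the difference $g^{n}:=f^{n}-f$. Three ingredients are needed: membership of the limit $f$ in the $p$-variation scale, a uniform $p$-variation bound on the differences $g^{n}$, and $\norm{g^{n}}_{\infty}\to 0$, which is already the hypothesis of uniform convergence.

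For the first ingredient I would invoke lower semicontinuity of the $p$-variation seminorm under pointwise convergence. For any fixed partition $\pi$ of $I$, the (finite) sum $\sum_{[u,v]\in\pi}\norm{f^{n}_{v}-f^{n}_{u}}_{V}^{p}$ converges termwise, as $n\to\infty$, to $\sum_{[u,v]\in\pi}\norm{f_{v}-f_{u}}_{V}^{p}$, and is bounded above by $\sup_{n}\norm{f^{n}}_{C^{p\var}(I;V)}^{p}$. Taking the supremum over $\pi$ yields $\norm{f}_{C^{p\var}(I;V)}\le M:=\sup_{n}\norm{f^{n}}_{C^{p\var}(I;V)}<\infty$. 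The triangle inequality for the $p$-variation seminorm (a direct consequence of Minkowski applied partition-wise) then gives the second ingredient: $\sup_{n}\norm{g^{n}}_{C^{p\var}(I;V)}\le 2M$.

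Combining these with the interpolation inequality applied to $g^{n}$,
\[
\norm{g^{n}}_{C^{p'\var}(I;V)}\le \norm{g^{n}}_{C^{p\var}(I;V)}^{p/p'}\norm{g^{n}}_{\infty}^{1-p/p'}\le (2M)^{p/p'}\norm{g^{n}}_{\infty}^{1-p/p'}\longrightarrow 0,
\]
since the exponent $1-p/p'$ is strictly positive and $\norm{g^{n}}_{\infty}\to 0$. Together with the uniform convergence itself, this is convergence of $f^{n}$ to $f$ in $C^{p'\var}(I;V)$.

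There is no serious obstacle to overcome: the whole argument is an immediate application of the interpolation inequality once one knows that $f$ itself belongs to the $p$-variation scale. The one step that genuinely uses the hypothesis $\sup_{n}\norm{f^{n}}_{C^{p\var}}<\infty$ (as opposed to mere boundedness along a subsequence) is the lower semicontinuity argument securing $\norm{f}_{C^{p\var}}\le M$; this is precisely what lets the weaker uniform convergence be upgraded to convergence in the stronger $p'$-variation topology.
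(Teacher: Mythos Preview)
Your argument is correct and follows essentially the same route as the paper: both establish $f\in C^{p\var}$ via lower semicontinuity of the $p$-variation seminorm under pointwise limits, and then apply the interpolation inequality to $f^{n}-f$ to trade the uniform $p$-variation bound and the uniform convergence for $p'$-variation convergence. You are merely slightly more explicit about the triangle-inequality step bounding $\sup_{n}\norm{f^{n}-f}_{C^{p\var}}$.
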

We supply a proof for reader's convenience. 
Uniform convergence gives
$$
\norm{f}_{C^{p\var}(I;V)}^p
=\sup_\pi\liminf_{n\to\infty}\sum_{[u,v]\in\pi}\norm{f^n_{u,v}}^p
\le \liminf_{n\to\infty}\norm{f^n}_{C^{p\var}(I;V)}^p.
$$
Applying the interpolation inequality yields
$$
\norm{f^n-f}_{C^{p'\var}(I;V)}
\le
\sup_n\norm{f^n-f}_{C^{p\var}(I;V)}^{p/p'}
\norm{f^n-f}_{\infty}^{1-p/p'}
\to0.
$$

\subsubsection*{Controls}
\begin{definition}\label{def:control}
A map $\omega:\Delta_T\to[0,\infty)$ is called a \emph{control} if it is continuous, $\omega(s,s)=0$, and
$$
\omega(s,t)\ge \omega(s,u)+\omega(u,t)
\qquad\text{for } s\le u\le t.
$$
\end{definition}

Controls are stable under addition and under powers $\omega\mapsto\omega^a$ for $a\ge1$. If $\omega_1,\omega_2$ are controls and $a,b>0$, then $(\omega_1^a\omega_2^b)^{1/(a+b)}$ is also a control.

Typical examples are $\omega(s,t)=|t-s|$ and, for $g\in C^{p\var}(I;V)$,
$$
\omega_g(s,t):=\norm{\delta g}_{C_{2}^{p\var}([s,t];V)}^{p}.
$$

\subsubsection*{Hölder rough paths and controlled paths}

For $\alpha\in(1/3,1/2]$, an $\alpha$-Hölder rough path over $V$ is a pair
$$
(X,\mathbb{X})\in C^\alpha([0,T];V)\times C^{2\alpha}(\Delta_T;V\otimes V)
$$
satisfying Chen’s relation
$$
\delta\mathbb{X}_{s,u,t}
=
\mathbb{X}_{s,t}-\mathbb{X}_{s,u}-\mathbb{X}_{u,t}
=
X_{s,u}\otimes X_{u,t}.
$$
We measure distance using the Hölder norms
$$
\rho_\alpha((X,\mathbb{X}),(Y,\mathbb{Y}))
:=
\norm{X-Y}_{C^\alpha}
+
\norm{\mathbb{X}-\mathbb{Y}}_{C^{2\alpha}},
$$
which becomes a metric after adding the corresponding supremum norms.

\begin{definition}\label{def:controlled-rp}
Let $X\in C^\alpha([0,T];V)$. A pair $(Y,Y')$ with
$Y\in C^\alpha([0,T];W)$ and
$Y'\in C^\alpha([0,T];\mathcal{L}(V,W))$
is said to be \emph{controlled by $X$} if the remainder
$$
R^Y_{s,t}:=Y_{s,t}-Y'_sX_{s,t}
$$
belongs to $C^{2\alpha}(\Delta_T;W)$.
We write $(Y,Y')\in\mathcal{D}_X^{2\alpha}([0,T];W)$ and set
$$
\norm{Y,Y'}_{X,2\alpha}
:=
\norm{Y'}_\alpha+\norm{R^Y}_{2\alpha}.
$$
\end{definition}

Replacing Hölder regularity by $p$-variation yields the analogous space
$$
\mathcal{D}_X^{1/(2\alpha)\var}([0,T];W),
$$
defined in the same way.

\subsection{Gaussian rough path lifts in Hilbert spaces}\label{sec:gaussianrp}
We recall the Gaussian rough path lift criterion from \cite[Chapter 10]{FH}, based on 2-d covariance variation, which will be used in Section~4.

The $2$-d covariance matrix of a real valued Gaussian process $X$ is
$$
R_X(s,t,s',t') := \mathbb{E}\left[X_{s,t} \otimes X_{s't'}\right].
$$
The following norms of the $2$-d covariance will be useful:
\begin{equation}\label{covariance-norm}
\begin{aligned}
    \norm{R_X}_{\rho; I \times I'}
    &:= \left[\sup_{\mathcal{P},\mathcal{P}'}
    \sum_{[u,v]\in \mathcal{P}, [u',v']\in \mathcal{P'}}
    \left|R_X(u,v,u',v')\right|^{\rho}\right]^{\frac{1}{\rho}},\\
    \norm{R_X}_{\rho\text{-var};I \times I'}
    &:= \left[\sup_{\mathcal{Q}}
    \sum_{[u,v]\times[u',v']\in \mathcal{Q}}
    \left|R_X(u,v,u',v')\right|^{\rho}\right]^{\frac{1}{\rho}},
\end{aligned}
\end{equation}
where $\mathcal{P},\mathcal{P}'$ are partitions of $I$ and $I'$ and $\mathcal{Q}$ is a partition of $I\times I'$. Since $\mathcal Q$ contains more elements than grid-like partitions, one has
$$
\norm{R_X}_{\rho;I \times I'} \leq \norm{R_X}_{\rho\text{-var};I \times I'}.
$$
We shall choose $\rho$ so that these quantities are finite for the processes under consideration.

Given independent mean-zero Gaussian processes $\{X^i\}$, we define the second level by the $L^2$-limit of Riemann sums
$$
\mathbb{X}^{i,j}_{s,t}
:= \lim_{|\mathcal{P}|\to 0}\sum_{[u,v] \in\mathcal{P}} X^i_{s,u} X^j_{u,v},
\qquad i<j,
$$
and extend to all $(i,j)$ by the geometric relations. Existence of this lift and the estimates used below follow from \cite[Chapter 10]{FH}.

We consider Gaussian processes on a separable Hilbert space $\mathcal{H}$ with orthonormal basis $\{e_i\}$. We write
$X_t = \sum_i X^i_t e_i$ for its components in this basis and work componentwise.

This result is a direct Hilbert space extension of \cite[Theorem 10.5]{FH}; the proof is identical and therefore omitted.

\begin{theorem}\label{thm:FH10.5}
    Let $(X,Y)=(X^i,Y^i)_i$ be a centered Gaussian process on $[0,T]$, such that $(X^i,Y^i)$ is independent of 
    $(X^j, Y^j)$ when $i \neq j$. Assume that there exists $\rho \in [1,2)$, and $\{M_i\}\in \ell^1$, such that 
    \begin{align*}
        \norm{R_{X^i}}_{\rho,[s,t]^2} &\leq M_i |t-s|^{\frac{1}{\rho}}, \qquad
        \norm{R_{Y^i}}_{\rho,[s,t]^2} \leq M_i |t-s|^{\frac{1}{\rho}}, \quad 
        \norm{R_{X^i-Y^i}}_{\rho,[s,t]^2} &\leq \epsilon^2 M_i |t-s|^{\frac{1}{\rho}},
    \end{align*}
for all $i\geq 1$ and $0\leq s \leq t \leq T.$ Then
    \begin{itemize}
        \item [a)] For every $q\in [1,\infty)$ and for all $0\leq s \leq t \leq T$,
       $$
            \mathbb{E} \left[\norm{Y_{s,t} - X_{s,t}}^q \right]^{\frac{1}{q}} \lesssim \epsilon \sqrt{\norm{M}_{\ell^1}}
            |t-s|^{\frac{1}{2\rho}}, \qquad 
            \mathbb{E} \left[\norm{\mathbb{Y}_{s,t} - \mathbb{X}_{s,t}}^q \right]^{\frac{1}{q}} \lesssim \epsilon \norm{M}_{\ell^1}
            |t-s|^{\frac{1}{\rho}}.
$$
        \item [b)] For any $\alpha < \frac{1}{2\rho}$ and $q\in [1,\infty)$, one has
        \begin{align*}
            \mathbb{E} \left[\norm{Y-X}_\alpha^q\right]^{\frac{1}{q}} &\lesssim \epsilon \sqrt{\norm{M}_{\ell^1}}, \qquad
            \mathbb{E} \left[\norm{\mathbb{Y}-\mathbb{X}}_{2\alpha}^q\right]^{\frac{1}{q}} \lesssim \epsilon \norm{M}_{\ell^1}.
        \end{align*}
        \item[c)] For $\rho \in [1, \frac{3}{2})$ and any $\alpha \in (\frac{1}{3}, \frac{1}{2\rho})$ , $q<\infty$ one has 
       $$ \mathbb{E}\left[\rho_\alpha(\mathbf{X},\mathbf{Y})^q \right]^{\frac{1}{q}} \lesssim \epsilon. $$
    \end{itemize}
\end{theorem}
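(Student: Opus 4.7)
The strategy is to reduce every estimate to its scalar counterpart in \cite[Theorem~10.5]{FH}, apply it componentwise to each pair $(X^i, Y^i)$, and then sum the resulting bounds using the $\ell^1$-summability of $\{M_i\}$. The independence assumption across $i$ diagonalises the Hilbert–Schmidt sums at both levels of the rough path, and the Hilbert-space structure enters only through expanding $\norm{\cdot}^2$ in the orthonormal basis $\{e_i\}$.

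For part (a), I would start with the first level: writing $\norm{Y_{s,t} - X_{s,t}}^2 = \sum_i (Y^i_{s,t} - X^i_{s,t})^2$, the hypothesis evaluated at the single rectangle $(s,t)\times(s,t)$ gives $\mathbb{E}[(Y^i_{s,t} - X^i_{s,t})^2] \le \epsilon^2 M_i |t-s|^{1/\rho}$, and summing over $i$ produces the $L^2$ bound $\epsilon \sqrt{\norm{M}_{\ell^1}} |t-s|^{1/(2\rho)}$; Kahane–Khintchine equivalence on the first Wiener chaos upgrades this to every $L^q$. For the second level, expand
$$\norm{\mathbb{Y}_{s,t} - \mathbb{X}_{s,t}}^2 = \sum_{i,j} |\mathbb{Y}^{i,j}_{s,t} - \mathbb{X}^{i,j}_{s,t}|^2.$$
For off-diagonal $(i,j)$ with $i\neq j$, I apply the scalar bound from \cite[Thm.~10.5]{FH} to the independent Gaussian pair $(X^i,Y^i)$ versus $(X^j,Y^j)$, obtaining $\mathbb{E}[|\mathbb{Y}^{i,j}_{s,t} - \mathbb{X}^{i,j}_{s,t}|^2] \lesssim \epsilon^2 M_i M_j |t-s|^{2/\rho}$; the diagonal $i=j$ is fixed by the geometric relation $\mathrm{Sym}(\mathbb{X}_{s,t}) = \tfrac12 X_{s,t}\otimes X_{s,t}$ and is controlled directly from the first-level estimate. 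Summing yields $\epsilon^2 \norm{M}_{\ell^1}^2 |t-s|^{2/\rho}$, and hypercontractivity on the (inhomogeneous) second Wiener chaos lifts this $L^2$ bound to every $L^q$.

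Parts (b) and (c) are classical consequences. A Kolmogorov–Garsia–Rodemich–Rumsey argument, applied to the Hilbert-space-valued two-parameter maps $Y - X$ and $\mathbb{Y} - \mathbb{X}$, promotes the pointwise moment estimates of (a) into $L^q$ controls of the Hölder seminorms $\norm{Y - X}_\alpha$ and $\norm{\mathbb{Y} - \mathbb{X}}_{2\alpha}$ for every $\alpha < 1/(2\rho)$, giving (b). For (c), the same argument applied to $\mathbf{X}$ and $\mathbf{Y}$ separately (that is, without the $\epsilon$ factor) yields an unconditional $L^q$-bound on $\rho_{\alpha'}(\mathbf{X},\mathbf{Y})$ at a slightly higher exponent $\alpha' < 1/(2\rho)$; interpolating this against the $\epsilon$-small bound of (b) at a slightly lower exponent gives the stated linear-in-$\epsilon$ rate for any $\alpha \in (1/3, 1/(2\rho))$, which requires precisely the restriction $\rho < 3/2$ so that $1/(2\rho) > 1/3$ and the canonical rough lift exists.

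The principal obstacle is closing the double sum $\sum_{i,j} \mathbb{E}[|\mathbb{Y}^{i,j}-\mathbb{X}^{i,j}|^2]$ with the summability $\norm{M}_{\ell^1}^2$ that the theorem asks for, rather than with a stronger $\ell^p$ expression. This is exactly where the independence of $(X^i,Y^i)$ across $i$ is essential: it ensures that $(X^i,X^j)$ is a two-dimensional Gaussian with block-diagonal covariance, so the scalar Young-type 2D variation estimate applies to each pair independently and factorises as $\sqrt{M_i M_j}$, enabling the $\ell^1$ summation to close. Everything else is bookkeeping on top of the scalar \cite[Theorem~10.5]{FH}.
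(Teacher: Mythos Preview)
Your proposal is correct and matches the paper's approach: the paper itself gives no proof, stating only that ``this result is a direct Hilbert space extension of \cite[Theorem~10.5]{FH}; the proof is identical and therefore omitted.'' Your componentwise reduction with $\ell^1$-summation is exactly what ``direct extension'' means here, and your sketch correctly identifies the one nontrivial bookkeeping point, namely that the 2D Young estimate for $\mathbb{E}|\mathbb{Y}^{i,j}_{s,t}-\mathbb{X}^{i,j}_{s,t}|^2$ factorises as $\epsilon^2 M_i M_j|t-s|^{2/\rho}$ thanks to independence, so the double sum closes at $\norm{M}_{\ell^1}^2$.

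One minor simplification: with the paper's definition $\rho_\alpha(\mathbf{X},\mathbf{Y})=\norm{X-Y}_{C^\alpha}+\norm{\mathbb{X}-\mathbb{Y}}_{C^{2\alpha}}$, part~(c) follows immediately from~(b) without the interpolation step you describe; the restriction $\rho<3/2$ is needed only so that the interval $(\tfrac13,\tfrac{1}{2\rho})$ is nonempty and the rough lift exists.
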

The previous theorem shows that rough path convergence in the Gaussian case, reduces to controlling the norms of the two–dimensional covariance of the processes and their difference.
The following criterion allows one to control the finite variation norm of the $2$-d covariance \cite{coutin-qian}.
\begin{theorem}[16.5~\cite{FVroughpaths}] \label{thm:CQ} 
    Suppose that for a Gaussian process $Y$, $H\in(0,1)$, $c_H>0$, and all $ 0<h<t-s$, the following holds:
    \begin{equ}\label{Coutin-Qian}
        \mathbb{E} \left[ |Y_{s,t}|^2\right] \leq c_H |t-s|^{2H}, \qquad 
        \left| \mathbb{E} \left( Y_{s, s+h} Y_{t, t+h} \right)\right| \leq c_H |t-s|^{2H-2}h^2, 
    \end{equ}
then the covariance of $Y$ has finite $\frac{1}{2H}$-variation, and
    $$ \norm{R_Y}_{\frac{1}{2H}-var;[s,t]^2} \lesssim  c_H|t-s|^{2H}, \text{ for } 0\leq s\leq t\leq T. $$
\end{theorem}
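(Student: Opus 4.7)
The plan is to bound the two-dimensional $\rho$-variation of $R_Y(u,v;u',v'):=\mathbb{E}[Y_{u,v}Y_{u',v'}]$ with $\rho=\f{1}{2H}$ directly from the two hypotheses. As a first reduction, I would argue that it suffices to control the sum over equally spaced grid partitions of $[s,t]^2$ with mesh $h=(t-s)/N$: a general rectangle appearing in an arbitrary partition $\mathcal Q$ can be decomposed as a disjoint union of grid rectangles, the covariance decomposes accordingly by bilinearity, and the $\ell^\rho$-sum over $\mathcal Q$ is dominated up to a universal constant by the $\ell^\rho$-sum over a common grid refinement. This step exploits the super-additivity of the $\rho$-variation in 2D, in the spirit of \cite{FVroughpaths}, Ch.~5.

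Once reduced to a grid, split the cells $[x_i,x_{i+1}]\times[x_j,x_{j+1}]$ into a near-diagonal part $|i-j|\le 1$ and an off-diagonal part $|i-j|\ge 2$. On the near-diagonal cells Cauchy--Schwarz combined with the first hypothesis gives $|R_Y|\le c_H h^{2H}$, hence $|R_Y|^\rho\le c_H^\rho h$; since there are $O(N)$ such cells, their contribution is $O(c_H^\rho(t-s))$. On the off-diagonal cells the second hypothesis applied with spacing $kh$, where $k=|i-j|$, yields $|R_Y|\le c_H(kh)^{2H-2}h^2$. Using $2H\rho=1$ and $(2H-2)\rho=1-\f{1}{H}$, this gives $|R_Y|^\rho\le c_H^\rho k^{1-1/H}h$, and a straightforward double summation bounds the off-diagonal contribution by $c_H^\rho (t-s)\sum_{k\ge 2}k^{1-1/H}$; the series is finite in the regime where the exponent is strictly less than $-1$. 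Combining the two contributions and taking the $\rho$-th root gives $\norm{R_Y}_{\frac{1}{2H}\var;[s,t]^2}\lesssim c_H|t-s|^{2H}$.

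The routine parts are the interpolation and summation steps above. The main obstacle is the reduction to grid partitions: the second hypothesis is phrased only for equal-length increments $h$, so one has to pass from rectangles of arbitrary aspect ratio, which enter the supremum defining $\norm{R_Y}_{\rho\var;[s,t]^2}$, to rectangles aligned on a common grid refinement. The cleanest route is to refine both axis partitions $\pi_1,\pi_2$ of $[s,t]$ to a single mesh $h$, decompose the covariance over each original rectangle via bilinearity, and invoke a refinement lemma from \cite{FVroughpaths} to dominate the $\ell^\rho$-sum by its refined counterpart before applying the grid estimates of Steps~2--3.
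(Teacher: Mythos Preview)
The paper does not prove this statement: it is quoted from \cite{FVroughpaths} and used as a black box in Section~\ref{subsec:pf-thmA}. So there is no proof in the paper to compare against. Your outline does recover the core Coutin--Qian mechanism (diagonal versus off-diagonal split on a uniform grid, Cauchy--Schwarz near the diagonal, correlation decay far from it, and the $\sum_{k\ge2} k^{1-1/H}$ summation), which is indeed the standard route.

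There is, however, a real gap in your reduction step. For $H<\f12$ one has $\rho=\f{1}{2H}>1$, and your claim that the $\ell^\rho$-sum over an arbitrary partition $\mathcal Q$ is dominated \emph{up to a universal constant} by the $\ell^\rho$-sum over a common equally-spaced refinement is not correct in general: decomposing a rectangle $A$ into $n$ sub-cells and applying Jensen gives only $|R_Y(A)|^\rho\le n^{\rho-1}\sum|R_Y(\text{cell})|^\rho$, and $n$ is unbounded under refinement. Super-additivity of the 2D $\rho$-variation goes the wrong way for this purpose (it bounds the whole from below by the pieces, not from above). Since the second hypothesis is stated only for \emph{equal-length} increments, you also cannot attack an arbitrary grid $\pi_1\times\pi_2$ directly. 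The clean fix is to first upgrade the second hypothesis to unequal intervals: for disjoint $[u,v],[u',v']$, bilinearity and a Riemann-sum limit give
\[
|\E[Y_{u,v}Y_{u',v'}]|\le c_H\int_u^v\!\int_{u'}^{v'}|x-y|^{2H-2}\,dx\,dy\ \lesssim\ c_H(v-u)(v'-u')\,d^{2H-2},
\]
with $d$ the gap between the intervals. With this pointwise bound in hand your near/far argument runs on any grid partition $\pi_1\times\pi_2$ and the problematic refinement is unnecessary.

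A secondary point: your series $\sum_{k\ge2}k^{1-1/H}$ converges only for $H<\f12$, whereas the theorem is stated for $H\in(0,1)$; the case $H\ge\f12$ needs a separate (easier) argument. For the paper's application only $H\in(\f13,\f12)$ is relevant, so this is not critical here.
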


\section{Rough path solutions and their equivalence with URD solutions} \label{sec:rough_setting}

This section proceeds in three steps: (i) construction of the rough integral appearing in the weak formulation \eqref{eq:weak_formulation}, (ii) analytic bounds for the associated driver and URD solutions, and (iii)  the equivalence of the two notions of solutions.

The proof strategy :
We first show that the operators $A^1$ and $A^2$ associated with ${\bf Z}$ form an unbounded rough driver and that their operator norms are controlled by the $\alpha$-Hölder norm of ${\bf Z}$. 
Subsequently, we introduce controls measuring the size of the driver and the drift terms. The core of the argument is then to exploit a compensation between spatial and temporal regularity, achieved by using different representations for the remainder term, and using smoothing operators. This mechanism yields local estimates for the finite variation norms of $u$, $R^u$, and $u^\natural$. These estimates initially require restricting the length of time intervals, and are subsequently upgraded to global ones using the properties of the controls. 

We also note that the same estimates play an important role in the next section, where they are used to obtain uniform bounds for the homogenisation result. Combined with Theorem~\ref{thm:A}, they provide the compactness input for Theorem~\ref{thm:B}.

\subsection{Construction of the rough integral in the weak formulation}

We begin with a simple observation.

\begin{lemma}\label{u-is-rough-path}
If $u$ is a process such that $R^u_{s,t} := u_{s,t} + \left[ \Pi \left( Z_{s,t} \cdot \nabla u_s \right) \right]$ belongs to $C^{\frac{1}{2\alpha}\var}_{2,T}H^{-2}$, then after testing with $\phi\in H^3$,
 $ \< u_{t}, \phi\> $  is a controlled rough path with derivative $-\< \Pi \left[ (\bullet \cdot \nabla)u_t \right], \phi \>$. 
\end{lemma}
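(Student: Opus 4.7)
The approach is to unpack Definition~\ref{def:controlled-rp} of a controlled rough path, with target Hilbert space $W=\R$, driving path $X=Z\in C^{\alpha}([0,T];H^\sigma)$, and candidate pair $Y_t:=\<u_t,\phi\>$ together with $Y'_t\in\mathcal{L}(H^\sigma,\R)$ defined by $h\mapsto -\<\Pi[(h\cdot\nabla)u_t],\phi\>$. The first manipulation I would perform is to rewrite $Y'$ in a form better suited to estimates: since both $h\in H^\sigma$ and $\phi\in H^3$ are divergence free, self-adjointness of the Leray projection against a divergence-free test function collapses the $\Pi$, and integrating by parts on the torus (using $\nabla\cdot h=0$) yields $Y'_t(h)=\<u_t,(h\cdot\nabla)\phi\>$. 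This shifts the derivative onto $\phi$, where the spare regularity $\phi\in H^3$ is available.

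Next I would read off the controlled-path remainder. A direct computation of $R^Y_{s,t}:=Y_{s,t}-Y'_s(Z_{s,t})$ gives $R^Y_{s,t}=\<u_{s,t}+\Pi[(Z_{s,t}\cdot\nabla)u_s],\phi\>=\<R^u_{s,t},\phi\>$. Because $\phi\in H^3\hookrightarrow H^2$, the hypothesis $R^u\in C^{1/(2\alpha)\var}_{2,T}H^{-2}$ transfers directly to $R^Y\in C^{1/(2\alpha)\var}_{2}(\Delta_T;\R)$. The finite $1/\alpha$-variation of $Y$ and $Y'$ themselves follows from the ambient regularity $u\in C^{1/\alpha\var}_TH^{-1}$ built into Definition~\ref{def:RPsolution}, combined with the bound $\|(h\cdot\nabla)\phi\|_{H^1}\lesssim \|h\|_{H^\sigma}\|\phi\|_{H^3}$ coming from a product estimate and the embedding $H^\sigma\hookrightarrow W^{2,\infty}$ (valid since $\sigma>2+3/2$).

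There is no substantive obstacle: the statement is essentially a translation of the hypothesis on $R^u$ into the controlled-rough-path vocabulary, once the correct duality pairing is identified. The only step requiring a touch of care is verifying that $\|Y'_t\|_{\mathcal{L}(H^\sigma,\R)}\lesssim \|u_t\|_{H^{-1}}\|\phi\|_{H^3}$, which is exactly what the product estimate plus the Sobolev embedding deliver.
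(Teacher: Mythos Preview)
Your proof is correct and follows the same approach as the paper's: the key identity $R^{Y}_{s,t}=\langle R^{u}_{s,t},\phi\rangle$ immediately transfers the $\tfrac{1}{2\alpha}$-variation regularity from $R^{u}$ to the tested remainder, which is all the paper's proof records. Your integration-by-parts rewriting of $Y'$ and your verification of the $\tfrac{1}{\alpha}$-variation of $Y,Y'$ (via $u\in C^{1/\alpha\var}_{T}H^{-1}$, imported from Definition~\ref{def:RPsolution} rather than the lemma's stated hypothesis) are additional detail the paper omits, but they do not alter the argument.
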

\begin{proof}
By assumption,
\begin{equation*}
    \< u_{s,t}, \phi\> + \< \Pi \left( Z_{s,t} \cdot \nabla u_s \right), \phi \> =  \< R^u_{s,t}, \phi \>,
\end{equation*}
and $\langle R^u_{\cdot,\cdot},\phi\rangle$ has finite $\frac{1}{2\alpha}$-variation, so an element of $C^{\frac{1}{2\alpha}\var}([0,T]; \R)$
. This is exactly the
controlled expansion with derivative $-\langle \Pi[(\bullet\cdot\nabla)u_t],\phi\rangle$.
\end{proof}

To define the rough integral, we recall a sewing Lemma in the finite variation context, c.f. \cite[Prop. 3.1]{ItoRPDE}:
\begin{lemma}[Sewing Lemma] \label{lem:sewing}
    Let a two parameter process $H: \Delta \rightarrow V$ satisfy $$ |\delta H_{s \theta t}| \leq \beta\, \omega(s,t)^a $$
    for some control $\omega$ and constants $a>1, \beta>0$ and $\theta\in (s,t)$.
    Then there exists unique maps
    $I : [0,T] \rightarrow V$ and $ I^\natural : \Delta \rightarrow V$ such that
    $$ \delta I_{s,t} = H_{s,t} + I^\natural_{s,t}, \qquad  |I^\natural_{s,t}| \leq  C\,\beta \, \omega(s,t)^a $$
  where $C$ depends only on $a$.     
\end{lemma}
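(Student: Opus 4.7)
The plan is to build $I$ as the limit of Riemann sums
$$
I^\pi_{s,t} := \sum_{[u,v]\in\pi} H_{u,v}
$$
indexed by partitions $\pi$ of $[s,t]$, and then define $I^\natural_{s,t}:=\delta I_{s,t}-H_{s,t}$; the bound on $I^\natural$ will then follow from a partition-uniform estimate of $|I^\pi_{s,t}-H_{s,t}|$. This is the Young-type construction standard in the area.

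The heart of the argument is a single combinatorial estimate. For $\pi=\{s=t_0<\cdots<t_n=t\}$, superadditivity of $\omega$, applied separately along even- and odd-indexed subsequences, gives
$$
\sum_{i=1}^{n-1}\omega(t_{i-1},t_{i+1})\ \le\ 2\,\omega(s,t),
$$
so some index $i^*$ satisfies $\omega(t_{i^*-1},t_{i^*+1})\le 2\omega(s,t)/(n-1)$. Removing $t_{i^*}$ from $\pi$ changes the Riemann sum by exactly $\delta H_{t_{i^*-1},t_{i^*},t_{i^*+1}}$, which the hypothesis bounds by $\beta\,\omega(t_{i^*-1},t_{i^*+1})^a$. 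Iterating until only $[s,t]$ remains,
$$
|I^\pi_{s,t}-H_{s,t}|\ \le\ \sum_{k=1}^{n-1}\beta\left(\frac{2\omega(s,t)}{k}\right)^{\!a}\ \le\ C\beta\,\omega(s,t)^a,\qquad C=2^a\sum_{k\ge 1}k^{-a},
$$
where $C<\infty$ precisely because $a>1$. This is the only place the assumption $a>1$ is used.

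Next I would show $\{I^\pi_{s,t}\}$ is Cauchy as $|\pi|\to 0$ by applying the above removal procedure on the common refinement of two partitions $\pi_1,\pi_2$; this gives
$$
|I^{\pi_1}_{s,t}-I^{\pi_2}_{s,t}|\ \le\ C\beta\,\omega(s,t)\cdot\max_{[u,v]\in\pi_1\cup\pi_2}\omega(u,v)^{a-1},
$$
which vanishes by continuity of $\omega$ with $\omega(s,s)=0$. Defining $I_t$ as the limit and $I^\natural:=\delta I-H$ yields the claimed decomposition together with $|I^\natural_{s,t}|\le C\beta\,\omega(s,t)^a$. For uniqueness, any other pair $(J,J^\natural)$ satisfying the same bound gives $g:=I-J$ with $|\delta g_{s,t}|\le 2C\beta\,\omega(s,t)^a$, and for every partition $\pi$ of $[s,t]$,
$$
|g_t-g_s|\ \le\ 2C\beta\sum_{[u,v]\in\pi}\omega(u,v)^a\ \le\ 2C\beta\,\omega(s,t)\,\max_{[u,v]\in\pi}\omega(u,v)^{a-1}\ \longrightarrow\ 0,
$$
forcing $g$ to be constant.

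The main obstacle, and essentially the only non-routine step, is the combinatorial lemma producing an index $i^*$ with $\omega(t_{i^*-1},t_{i^*+1})$ of order $\omega(s,t)/n$; once this is in hand, everything else is telescoping plus the summability of $\sum k^{-a}$ for $a>1$.
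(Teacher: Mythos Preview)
Your argument is correct and is the standard Young--Gubinelli construction of the sewing map. The paper does not supply its own proof of this lemma; it simply recalls the statement and cites \cite[Prop.~3.1]{ItoRPDE}. So there is nothing to compare against beyond noting that your proof is the classical one underlying that reference. One minor remark: uniqueness of $I$ holds only up to an additive constant (your argument correctly shows $I-J$ is constant), and in the Cauchy step it would be cleaner to write the bound with $\max_{[u,v]\in\pi_1}\omega(u,v)^{a-1}$ (and likewise for $\pi_2$) rather than over $\pi_1\cup\pi_2$, since the estimate comes from applying the point-removal bound on each cell of the coarser partition.
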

   Since $a>1$, the sewing lemma yields a canonical primitive $I$ (unique up to its initial value),
and we interpret $I$ as the rough integral associated with $H$.

The choice of the Gubinelli derivative of $y$ is motivated by the rough path chain rule.
   Formally viewing $-\Pi (\bullet \cdot \nabla)u_t$ as a map $F(u_t)$, the corresponding Gubinelli derivative is
    $$DF(u_t)u'_t=-\Pi (\bullet \cdot \nabla)u'_t= \Pi(\bullet \cdot \nabla)
    \Pi(\star \cdot \nabla)u_t.$$

\begin{proposition}\label{rem:distrvstested}
Let $u$ denote a rough path solution of \eqref{eq:modeleqn}. Then for any $\phi \in H^3$, $$y_t:=\< \Pi[(\bullet \cdot \nabla )u_t] ,\phi \>,  \qquad y'_t := \< \Pi \left[ (\bullet \cdot \nabla) \Pi \left[ (\star \cdot \nabla)u_t \right]\right], \phi \>$$ is a rough path controlled by $(Z, \Z)$.
This defines the rough integral:
 $$-\int_s^t \< \Pi[(d{\bf Z}_r \cdot \nabla )u_r] ,\phi \> =\lim_{|\pi|\to 0} \sum_{[s', t']\in \pi}   y_{s'}Z_{s',t'} + y'_{s'}\mathbb{Z}_{s',t'}.$$
    Moreover,
$\phi\mapsto \int_s^t \< \Pi(d{\bf Z}_r \cdot \nabla )u_r ,\phi \>$ is a bounded linear functional on $H^3$, and hence can be seen as an element of $H^{-3}$.
\end{proposition}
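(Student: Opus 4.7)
The plan is to verify $(y,y')$ is controlled by $Z$ in the $\frac{1}{2\alpha}$-variation sense, apply the finite-variation sewing Lemma~\ref{lem:sewing} to the enhanced Riemann sum $H_{s,t} := y_s Z_{s,t} + y'_s \Z_{s,t}$, and track the $\phi$-dependence to obtain the $H^{-3}$ bound. First, I would plug the rough-path solution expansion $u_{s,t} = -\Pi(Z_{s,t}\cdot\nabla u_s) + R^u_{s,t}$ into $y_{s,t}(h) = \<\Pi[(h\cdot\nabla) u_{s,t}],\phi\>$ for a generic test vector $h\in H^\sigma$. The leading term reproduces (up to a sign) the natural evaluation of $y'_s$ on the tensor built from $h$ and $Z_{s,t}$, and the remainder reduces to $R^y_{s,t}(h)=\<\Pi[(h\cdot\nabla)R^u_{s,t}],\phi\>$. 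Using skew-symmetry of the Navier-Stokes nonlinearity recalled in Section~\ref{sec:preliminaries}, I would rewrite this as $-\<R^u_{s,t},\Pi[(h\cdot\nabla)\phi]\>$; since $\phi\in H^3$ and $h\in H^\sigma$ with $\sigma>2+\frac{3}{2}$, the product estimate~\eqref{eq:b_properties} gives $\norm{\Pi[(h\cdot\nabla)\phi]}_{H^2}\lesssim \norm{h}_{H^\sigma}\norm{\phi}_{H^3}$, and pairing with $R^u\in C^{\frac{1}{2\alpha}\var}_{2,T}H^{-2}$ delivers the required $\frac{1}{2\alpha}$-variation of $R^y$ as a map into $(H^\sigma)^*$, with norm linear in $\norm{\phi}_{H^3}$. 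A parallel computation, this time moving both transport operators onto $\phi$ and pairing with $u\in C^{\frac{1}{\alpha}\var}H^{-1}$, shows that $y$ and $y'$ themselves are of finite $\frac{1}{\alpha}$-variation with constants proportional to $\norm{\phi}_{H^3}$.

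Next, I would invoke Lemma~\ref{lem:sewing}. The standard calculation combining the additivity of $Z$ and Chen's relation for $\Z$ yields $\delta H_{s,\theta,t} = -R^y_{s,\theta}(Z_{\theta,t}) - y'_{s,\theta}\cdot\Z_{\theta,t}$ in the appropriate duality pairing, and the bounds from the previous step control these two terms by $\omega(s,t)^{2\alpha+\alpha}$ and $\omega(s,t)^{\alpha+2\alpha}$ respectively, where $\omega$ is a control built from $\omega_Z$, $\omega_{R^y}$ and $\omega_{y'}$. Because $\alpha>\frac{1}{3}$, the exponent $3\alpha$ exceeds $1$ and the sewing lemma produces a unique primitive $I$ with $|I_{s,t}-H_{s,t}|\lesssim\omega(s,t)^{3\alpha}$; this primitive is precisely the rough integral that appears on the right-hand side of the weak formulation \eqref{eq:weak_formulation}.

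Finally, since every bound produced above is linear in $\norm{\phi}_{H^3}$, so are $y_s Z_{s,t}$, $y'_s \Z_{s,t}$, and the sewing remainder, so the map $\phi\mapsto I_{s,t}(\phi)$ is bounded on $H^3$, identifying $I_{s,t}\in H^{-3}$. The main obstacle is the functional-valued nature of $(y,y')$: one has to keep the dualities between $H^\sigma$, $(H^\sigma)^{\otimes 2}$ and their duals straight, and only by systematically exploiting skew-symmetry to transfer all spatial derivatives onto the smooth test function $\phi$ can the regularity loss from the transport term be absorbed. Once this bookkeeping is in place, everything else is a routine application of the finite-variation sewing lemma.
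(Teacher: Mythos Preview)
Your proposal is correct and follows essentially the same approach as the paper's proof: verifying the finite $\frac{1}{\alpha}$-variation of $y,y'$ and the $\frac{1}{2\alpha}$-variation of $R^y$ by transferring derivatives onto $\phi$ via skew-symmetry, then applying the sewing Lemma~\ref{lem:sewing} to $H_{s,t}=y_sZ_{s,t}+y'_s\Z_{s,t}$ and tracking the linear $\norm{\phi}_{H^3}$ dependence throughout. The only difference is cosmetic---you make the integration-by-parts step explicit, whereas the paper absorbs it directly into the duality estimate $\norm{R^y_{s,t}}_{\L(H^\sigma;\R)}\le\norm{R^u_{s,t}}_{H^{-2}}\norm{\phi}_{H^3}$.
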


\begin{proof}
    {\it Step 1:}  Regularity of $y$ and $y'$. 
    \\
We begin by showing that $y_t$ is of finite $\frac{1}{\alpha}$-variation in time. Indeed, we have for a $\psi \in H^\sigma$
\begin{align*}
    \left| y_t \psi - y_s \psi \right| = & \left| \< \Pi[(\psi \cdot \nabla )u_{s,t}] ,\phi \> \right| 
    \leq \norm{\phi}_{H^2} \norm{(\psi \cdot \nabla )u_{s,t}}_{H^{-2}} \leq \norm{\phi}_{H^2} \norm{\psi}_{H^\sigma} \norm{u_{s,t}}_{H^{-1}}, 
\end{align*}
which gives
\begin{equation*}
    \norm{y_{s,t}}_{\mathcal{L}(H^\sigma; \mathbb{R})} \leq \norm{u_{s,t}}_{H^{-1}} \norm{\phi}_{H^2}
\end{equation*}
and therefore
$$ \norm{y}_{C^{\frac{1}{\alpha}\var}([0,T]; \L(H^\sigma; \mathbb{R}))} \leq
 \norm{u}_{C^{\frac{1}{\alpha}\var}([0,T];H^{-1})}\norm{\phi}_{H^2}.$$
By a similar calculation one can show that $y'_t$ is also of finite $\frac{1}{\alpha}$-variation, and one has
$$ \norm{y'}_{C^{\frac{1}{\alpha}\var}([0,T]; \L(H^\sigma \otimes H^\sigma; \mathbb{R}))} \leq
 \norm{u}_{C^{\frac{1}{\alpha}\var}([0,T];H^{-1})}\norm{\phi}_{H^3}.$$

    {\it Step 2:} Controlled structure of $y$ with respect to $Z$.
\\
Observe that
\begin{equation*}
    R^{y}_{s,t} = y_{s,t} - y'_s Z_{s,t} = -\<\Pi \left[ (\bullet \cdot \nabla) R^u_{s,t} \right], \phi \>.
\end{equation*}
Since $R^u_{s,t} := u_{s,t} - \left[ -\Pi \left( Z_{s,t} \cdot \nabla u_s \right) \right]$ belongs to
 $C^{\frac{1}{2\alpha}\var}_{2,T}H^{-2}$ by the definition,  for $\phi \in H^3$, 
 $$ \norm{ R^{y}_{s,t}}_{\L(H^\sigma;\R)} \leq \norm{R^u_{s,t}}_{H^{-2}} \norm{\phi}_{H^3}.$$
Consequently,  $y\in \mathcal{D}^{\f 1 {2\alpha} \var}_X([0,T]; H^{-2})$. 

    {\it Step 3:} Application of the sewing lemma.
\\
We define the increment process 
$$ H_{s,t} = y_s Z_{s,t} +y'_s \mathbb{Z}_{s,t}, $$
 verify the conditions of the sewing lemma,  and show that there is a rough integral  $\int (y, y')_s d\Z_s$. Note that $ H_{s,t} =\<A_{s,t}^1 u_s, \phi\>+\<A_{s,t}^2 u_s, \phi\>$.
Expanding $\delta H$ using Chen's relations:
\begin{align} \label{eq:increment-est}
    \left| \delta H_{s,u,t} \right| &= \left|y_s Z_{s,t} - y_u Z_{u,t} -y_s Z_{s,u} +
   y'_s \mathbb{Z}_{s,t} -y'_u \mathbb{Z}_{u,t} -y'_s \mathbb{Z}_{s,u} \right|\\
    &= \left| -y_{s,u} Z_{u,t} + y'_s Z_{s,u}\otimes Z_{u,t} - 
   y'_{s,u}\mathbb{Z}_{u,t} \right|\\
    &= \left| -R^{y}_{s,u} Z_{u,t} - y'_{s,u}\mathbb{Z}_{u,t} \right|\\
    &\leq \norm{R^u_{s,u}}_{H^{-2}} \norm{\phi}_{H^3} \norm{Z_{u,t}}_{H^\sigma} + \norm{y'_{s,u}}_
    {\L(H^\sigma \otimes H^\sigma; \R)}\norm{\mathbb{Z}_{u,t}}_{H^\sigma \otimes H^\sigma}\\
    &\lesssim \omega(s,t)^{3\alpha}\norm{\phi}_{H^3},
\end{align}
where $\omega$ defined below is a control, as the product of controls with exponents adding to $\geq1$ is again a control
$$\omega(s,t) = \left( \norm{R^u}_{C^{\frac{1}{2\alpha}\var}([s,t];H^{-2})}^{\frac{1}{2\alpha}}\right)^{\frac{2}{3}}|t-s|^{\frac{1}{3}} + 
\left(\norm{u}_{C^{\frac{1}{\alpha}\var}([s,t];H^{-1})}^{\frac{1}{\alpha}}\right)^{\frac{1}{3}}|t-s|^{\frac{2}{3}}.$$

    {\it Step 4:} Identification as a bounded linear functional.\\
With these estimates, the sewing Lemma \ref{lem:sewing} applies showing that the rough integral is defined as claimed and $\phi\mapsto \int_s^t \< \Pi(d{\bf Z}_r \cdot \nabla )u_r ,\phi \>$ is a bounded linear functional on $H^3$. Linearity of the increment process in $\phi$ is a consequence of the uniqueness of the limiting linear functional from the sewing lemma, while boundedness follows from \eqref{eq:increment-est} and the quantitative bound in the sewing Lemma.
\end{proof}

\subsection{Unbounded rough drivers associated with fractional transport noise}
\label{sec:driver-regularity}

To understand why the operators $A^1,A^2$ arise naturally in this setting, it is instructive to consider the equation when $Z$ is smooth. In this case one can iterate the equation and identify the structure that will later be encoded by the unbounded rough driver.

Let $u$ be the solution to \eqref{eq:modeleqn} with smooth $Z$. Write
\begin{equ}\label{mu}
u_{s,t}=\mu_{s,t}- \int_s^t \Pi[(\dot{Z}_r \cdot \nabla) u_r ] dr, \qquad \mu_{s,t} := \int_s^t (\nu  \Delta u_r -\Pi[(u_r\cdot \nabla) u_r] )dr.
\end{equ} 
 Iterating this identity twice yields:
\begin{equs} \label{eqn:iteration}
  &  u_{s,t} -\mu_{s,t}=- \int_s^t \Pi[(\dot{Z}_r \cdot \nabla) u_r ] dr\\
    &=-  \int_s^t \Pi \left[ (\dot{Z}_r \cdot \nabla)u_s \right] dr - \int _s^t  \Pi \left[ (\dot{Z}_r \cdot \nabla)\mu_{s,r} \right] dr +
    \int_s^t \Pi \left[ (\dot{Z}_r \cdot \nabla) \int_s^r \Pi \left[ (\dot{Z}_\theta \cdot \nabla) u_\theta \right] 
    d\theta\right] dr\\
    &= -  \Pi [(Z_{s,t}\cdot \nabla)u_s ] +
    \int_s^t \Pi \left[ (\dot{Z}_r \cdot \nabla) \Pi \left[ (\star\cdot \nabla) u_s  (Z_{s,r})\right] 
    d\theta\right] dr + u^\natural_{s,t}\\
    &= - \Pi \left[ (\bullet \cdot \nabla)u_s \right]Z_{s,t}  +
    \Pi \left[ (\bullet \cdot \nabla) \Pi \left[ (\star \cdot \nabla)u_s\right] \right] \mathbb{Z}_{s,t} + u^\natural_{s,t},
\end{equs}
where in the penultimate line we replaced $u_\theta$ by $u_s$ in the last term, and we defined the remainder
\begin{align*} 
    u^{\natural}_{s,t} &= -\int_{s}^{t}\Pi [(\dot{Z}_r \cdot \nabla) \mu_{s,r}] dr
    + \int_{s}^{t}\int_{s}^{r} \Pi[(\dot{Z}_r \cdot \nabla) \Pi[(\dot{Z}_\theta \cdot \nabla) \mu_{s,r}]] d\theta dr\\
    & \qquad - \int_{s}^{t}\int_{s}^{r}\int_{s}^{\theta} \Pi [(\dot{Z}_r \cdot \nabla) \Pi[(\dot{Z}_\theta \cdot \nabla)
    \Pi [ (\dot{Z}_v \cdot \nabla) u_{v}]]] dv d\theta dr.
\end{align*}
This computation is typical of rough path analysis and is closely related to Davie’s interpretation of rough differential equations via difference equations \cite{davie}. It shows that, even in the smooth case, the increments of the solution naturally decompose into a term linear in $Z_{s,t}$, a term linear in $\Z_{s,t}$, and a higher-order remainder.

This observation motivates the introduction of the operators $A^{(1)}, A^{(2)}$ and suggests that, in the rough setting, the correct notion of solution should require the remainder to satisfy
$$u^\natural \in C^{p\var}_T H^{-3}.$$

We now recall the unbounded operators, introduced in the introduction, and describe their analytic properties.
For a vector field $u$, define 
  $$ A^{(1)}( \cdot, u) : h \mapsto  -\Pi(h \cdot \nabla )u. $$ 
When $u\in C^{\frac{1}{\alpha}\var}([0,T];H^{-1})$, the Sobolev embedding $H^\sigma \hookrightarrow W^{2, \infty}$ implies that $A^{(1)}(\cdot, u)$ takes values in $H^{-2}$.
Its iterated action defines :
$$ A^{(2)}(\cdot,\phi): f \otimes g \mapsto \Pi(g \cdot \nabla \Pi(f \cdot \nabla \phi)).
$$
We then set
\begin{equ}
    A^{1}_{s,t} = A^{(1)}(Z_{s,t}, \cdot ), \qquad 
    A^{2}_{s,t} = A^{(2)}(\mathbb{Z}_{s,t}, \cdot ).
\end{equ}
The relevance of these spaces is that the solution $u$ has better temporal regularity when viewed lower in the Sobolev scale, which is precisely the level at which the rough integral in the weak formulation is interpreted.

\begin{lemma} \label{lem:URDlinktoRP}
    Let $(Z,\mathbb{Z})$ be an $\alpha$-H\"older continuous rough path with values in $H^\sigma$. Then   
      \begin{align*}
        \norm{A^1_{s,t}}_{\L(H^{-m}, H^{-(m+1)})} &\lesssim \norm{ Z_{s,t}}_{H^{\sigma}} \quad \text{for $m\in[0,2]$},\\
        \norm{A^2_{s,t}}_{\L(H^{-m}, H^{-(m+2)})} &\lesssim \norm{ \mathbb{Z}_{s,t}}_{H^{\sigma}\otimes H^\sigma} \quad \text{for $m\in[0,1]$}.
    \end{align*}
Moreover, $(A^{1}, A^{2}) $ is an unbounded rough driver.
\end{lemma}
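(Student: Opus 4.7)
The argument splits into three pieces: operator bounds for $A^1$, iteration bounds for $A^2$, and a short algebraic check of Chen's relations.

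For $A^1$, I fix $u\in H^{-m}$ with $m\in[0,2]$ and dualise against $\phi\in H^{m+1}$. Self-adjointness of $\Pi$ together with the divergence-free condition on $Z_{s,t}$ gives, via integration by parts, $\<\Pi[(Z_{s,t}\cdot\nabla)u],\phi\>=-\<u,(Z_{s,t}\cdot\nabla)\Pi\phi\>$. It therefore suffices to estimate $\norm{Z_{s,t}\cdot\nabla\Pi\phi}_{H^m}$. Since $\sigma>2+3/2$, the Sobolev embedding $H^\sigma\hookrightarrow W^{2,\infty}$ holds and $H^\sigma$ acts as a multiplier on $H^m$ for $m\in[0,2]$, so the standard Sobolev product rule yields $\norm{Z_{s,t}\cdot\nabla\Pi\phi}_{H^m}\lesssim\norm{Z_{s,t}}_{H^\sigma}\norm{\phi}_{H^{m+1}}$, which is the required bound.

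For $A^2$ I use that $A^{(2)}(f\otimes g,\phi)=\Pi(g\cdot\nabla\Pi(f\cdot\nabla\phi))$ is the composition of two operators of type $A^{(1)}$. For $m\in[0,1]$, applying the previous step first to $\phi\in H^{-m}$ gives $\Pi(f\cdot\nabla\phi)\in H^{-(m+1)}$ with constant $\lesssim\norm{f}_{H^\sigma}$; then applying it again (noting $m+1\le 2$ keeps us in the admissible range) gives an element of $H^{-(m+2)}$ with additional constant $\lesssim\norm{g}_{H^\sigma}$. Writing $\mathbb{Z}_{s,t}=\sum_{i,j}\mathbb{Z}_{s,t}^{ij}\,e_i\otimes e_j$ in an orthonormal basis of $H^\sigma$ and summing yields the announced bound with $\norm{\mathbb{Z}_{s,t}}_{H^\sigma\otimes H^\sigma}$ on the right.

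Finally, Chen's relations follow by direct expansion. Since $Z_{s,t}$ is a path increment, $Z_{s,t}-Z_{s,r}-Z_{r,t}=0$ and hence $\delta A^1_{s,r,t}=0$. For the second level, the rough-path Chen relation $\mathbb{Z}_{s,t}-\mathbb{Z}_{s,r}-\mathbb{Z}_{r,t}=Z_{s,r}\otimes Z_{r,t}$ combined with the definition of $A^{(2)}$ gives $\delta A^2_{s,r,t}\phi=\Pi(Z_{r,t}\cdot\nabla\Pi(Z_{s,r}\cdot\nabla\phi))$, which is precisely $A^1_{r,t}A^1_{s,r}\phi$. The only mildly delicate step is justifying the Sobolev multiplier estimate uniformly in $m\in[0,2]$; this is a Moser/Kato--Ponce-type inequality that works comfortably thanks to the margin built into the assumption $\sigma>2+3/2$.
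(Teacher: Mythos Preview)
Your proof is correct. For $A^1$ and for Chen's relations it coincides with the paper's argument: dualise, integrate by parts using divergence-freeness of $Z_{s,t}$, and control the multiplier via the Sobolev embedding $H^\sigma\hookrightarrow W^{m,\infty}$ for $m\le 2$ (the paper phrases it this way rather than via Kato--Ponce, but either works given $\sigma>2+3/2$).

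For $A^2$ you take a slightly different route from the paper. You iterate the $A^1$ bound, noting that $m\in[0,1]$ keeps $m+1\in[1,2]$ inside the admissible range for the first-order estimate, so the composition $A^{(2)}(f\otimes g,\cdot)=A^{(1)}(g,\cdot)\circ A^{(1)}(f,\cdot)$ maps $H^{-m}\to H^{-(m+2)}$ with constant $\lesssim\|f\|_{H^\sigma}\|g\|_{H^\sigma}$. The paper instead passes to the adjoint, writing $\langle A^{(2)}(\mathbb Z,\phi),\psi\rangle=\langle\phi,A^{(2)}(\mathbb Z^T,\psi)\rangle$ and then bounding $\|A^{(2)}(\mathbb Z^T,\psi)\|_{H^m}$ for $\psi\in H^{m+2}$ directly on the positive side of the scale. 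Your iteration argument is arguably the cleaner of the two, since it exploits the composition structure and avoids computing the transpose. One small caveat: the final basis-expansion remark is loose as written---a triangle-inequality sum over an orthonormal expansion produces the $\ell^1$ norm of the coefficients, not the Hilbert--Schmidt norm $\|\mathbb Z_{s,t}\|_{H^\sigma\otimes H^\sigma}$---but the paper handles the extension from simple tensors to general $\mathbb Z$ at exactly the same level of detail, so this is not a defect relative to the paper's proof.
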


\begin{proof}
Let $0 \leq s \leq r \leq t \leq T$. The Chen's relations
  $$\delta A^{1}_{s,r,t}=0 \qquad  \delta A^{2}_{s,r,t}=A^{1}_{r,t}A^{1}_{s,r}$$
follow directly from the Chen's relations for $\mathbb{Z}$,
 $$\Z_{s,t} - \Z_{s,u}-\Z_{u,t} = Z_{s,u} \otimes Z_{u,t},$$
 and bilinearity of $A^{(2)}$. Indeed, for any $\phi \in H^{-m}$, 
 \begin{align*}
    \delta A^{2}_{s,u,t}\phi &:= A^{2}_{s,t}\phi -A^{2}_{s,u}\phi -A^{2}_{u,t}\phi \\
    &= A^{(2)}(\mathbb{Z}_{s,t},\phi) - A^{(2)}(\mathbb{Z}_{s,u},\phi) - A^{(2)}(\mathbb{Z}_{u,t},\phi) \\
    &= A^{(2)}(Z_{s,u}\otimes Z_{u,t}, \phi)=A^{(1)}(Z_{u,t}, A^{(1)}(Z_{s,u},\phi) \, )\\
    &= A^{1}_{u,t}A^{1}_{s,u}\phi.
\end{align*}
For the analytic bounds, assume that  $\norm{\phi}_{H^{-m}}\leq 1$.  

{\it Estimate for $A^1$.} Use skew symmetry of$v\mapsto -\Pi[(u \cdot \nabla) v] $, the divergence free property of  $Z_{s,t}$, and integration by parts
\begin{align*}
    \norm{A^{1}_{s,t}\phi}_{H^{-(m+1)}} &= \sup_{\norm{\psi}_{H^{m+1}} \leq 1} \langle Z_{s,t} \cdot \nabla\phi, \psi \rangle
    = \sup_{\norm{\psi}_{H^{m+1}} \leq 1} \langle Z_{s,t} \cdot \nabla\psi, \phi \rangle \\
    &\leq  \sup_{\norm{\psi}_{H^{m+1}} \leq 1} \norm{ Z_{s,t} \cdot \nabla\psi}_{H^{m}} \leq \norm{ Z_{s,t}}_{W^{m,\infty}}
    \lesssim \norm{ Z_{s,t}}_{H^{\sigma}}
\end{align*}
since $H^{\sigma} \subset W^{m, \infty}$ for $\sigma>2+\f 32$, where $m \in [0,2]$ . 

{\it Estimate for $A^2$.}  Using divergence–free structure and integration by parts,
\begin{equation*}
    \langle A^{2}_{s,t}\phi , \psi \rangle = \langle A^{(2)}(\mathbb{Z}_{s,t}, \phi), \psi \rangle
     = \langle \phi, A^{(2)}(\mathbb{Z}_{s,t}^T, \psi) \rangle
\end{equation*}
where $(x\otimes y)^T=y\otimes x$. Hence for $m\in [0,1]$ and $\phi \in H^{-m}$ with $\norm{\phi}_{H^{-m}} \leq 1$:
\begin{align*}
    \norm{A^{2}_{s,t}\phi}_{H^{-(m+2)}} &= \sup_{\norm{\psi}_{H^{m+2}} \leq 1} \langle \phi,A^{(2)}(\mathbb{Z}_{s,t}^T, \psi) \rangle 
     \leq \sup_{\norm{\psi}_{H^{m+2}} \leq 1} \norm{A^{(2)}(\mathbb{Z}_{s,t}^T, \psi)}_{H^{m}} \lesssim \norm{\mathbb{Z}_{s,t}}_{H^{\sigma} \otimes
     H^{\sigma}},
\end{align*}
again using $H^{\sigma}\hookrightarrow W^{m+1,\infty}$.
\end{proof}

Assuming that ${\bf Z}$ is an $\alpha$-H\"older rough path, by Lemma \ref{lem:URDlinktoRP} we obtain:
\begin{corollary} \label{cor:K}
Let $C$ be the proportionality constant from Lemma~\ref{lem:URDlinktoRP} and define
$$K= C\Bigl(\norm{Z}^{\frac{1}{\alpha}}_{C^{\alpha}([0,T];H^{\sigma})}+ 
    \norm{\mathbb{Z}}^{\frac{1}{2\alpha}}_{C^{2\alpha}([0,T];H^{\sigma} \otimes H^{\sigma}}\Bigr).$$
Then
\begin{equ}\label{A-control}
    \norm{A^{1}_{s,t}}^{\frac{1}{\alpha}}_{\mathcal{L}(H^{-m};H^{-(m+1)})} +
    \norm{A^{2}_{s,t}}^{\frac{1}{2\alpha}}_{\mathcal{L}(H^{-m};H^{-(m+2)})}
    \leq  K|t-s|.
\end{equ}
\end{corollary}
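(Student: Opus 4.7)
The proof plan is essentially a one-line computation, combining Lemma~\ref{lem:URDlinktoRP} with the Hölder regularity hypothesis on $\mathbf Z$.

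First, I would apply the estimate from Lemma~\ref{lem:URDlinktoRP}: for $m\in[0,2]$,
$\norm{A^1_{s,t}}_{\mathcal{L}(H^{-m};H^{-(m+1)})} \le C\,\norm{Z_{s,t}}_{H^\sigma}$,
and for $m\in[0,1]$,
$\norm{A^2_{s,t}}_{\mathcal{L}(H^{-m};H^{-(m+2)})} \le C\,\norm{\mathbb Z_{s,t}}_{H^\sigma\otimes H^\sigma}$.
Then use the definition of the Hölder norms to replace $\norm{Z_{s,t}}_{H^\sigma}$ by $\norm{Z}_{C^\alpha([0,T];H^\sigma)}\,|t-s|^\alpha$ and $\norm{\mathbb Z_{s,t}}_{H^\sigma\otimes H^\sigma}$ by $\norm{\mathbb Z}_{C^{2\alpha}(\Delta_T;H^\sigma\otimes H^\sigma)}\,|t-s|^{2\alpha}$.

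Next, I would raise the first inequality to the power $1/\alpha$ and the second to the power $1/(2\alpha)$, so that the right-hand sides become linear in $|t-s|$:
\begin{equation*}
\norm{A^1_{s,t}}^{1/\alpha}_{\mathcal{L}(H^{-m};H^{-(m+1)})}
\le C^{1/\alpha}\norm{Z}^{1/\alpha}_{C^\alpha}\,|t-s|,\qquad
\norm{A^2_{s,t}}^{1/(2\alpha)}_{\mathcal{L}(H^{-m};H^{-(m+2)})}
\le C^{1/(2\alpha)}\norm{\mathbb Z}^{1/(2\alpha)}_{C^{2\alpha}}\,|t-s|.
\end{equation*}
Summing and enlarging $C$ to dominate $\max(C^{1/\alpha},C^{1/(2\alpha)})$ yields the claimed bound with the stated $K$.

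There is no real obstacle here: the content of the corollary is simply to package the two Hölder-type bounds of Lemma~\ref{lem:URDlinktoRP} into a single superadditive control $\omega(s,t)=K|t-s|$, which is the shape required to feed the driver into a sewing/rough driver argument in the next subsection. The only point worth noting is that because the exponents $1/\alpha$ and $1/(2\alpha)$ are both $\ge 1$ (as $\alpha\le 1/2$), the quantities on the left-hand side are themselves superadditive (up to constants), so the right-hand side $K|t-s|$ is a bona fide control as required by Definition~\ref{def:control}.
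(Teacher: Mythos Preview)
Your proposal is correct and matches the paper's approach exactly: the paper does not even write out a proof, simply stating that the corollary follows from Lemma~\ref{lem:URDlinktoRP} under the $\alpha$-H\"older assumption on $\mathbf Z$. Your observation about absorbing $\max(C^{1/\alpha},C^{1/(2\alpha)})$ into the constant is the only detail one might add, and your remark that $K|t-s|$ is a genuine control is precisely how it is used downstream in \eqref{control:roughpath}.
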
    

\subsection{A-priori estimates for URD solutions}

We now convert the operator bounds into a control and begin the a-priori estimates. We first derive local estimates for the residual, the solution, and the remainder, which will subsequently be globalized.

Define the drift $\mu$ and two basic controls, in the sense of Definition~\ref{def:control},
 \begin{equ}\label{control:roughpath}
    \mu_{s,t} := \int_s^t \nu \Delta u_r - \Pi [(u_r \cdot \nabla) u_r] dr, \quad \omega_\mu(s,t) := \int_{s}^{t} (1+\norm{u_{r}}_{H^{1}})^{2}dr, \quad \omega_{A}(s,t) := K|t-s|,
  \end{equ}
  where $K$ is the constant from \eqref{A-control}, which depends only on the rough H\"older norm of $\Z$.
By definition, 
\begin{equation}\label{omega-A}
 \norm{A^{1}_{s,t}}^{\frac{1}{\alpha}}_{\mathcal{L}(H^{-n};H^{-(n+1)})} +
    \norm{A^{2}_{s,t}}^{\frac{1}{2\alpha}}_{\mathcal{L}(H^{-n};H^{-(n+2)})} \leq
    \omega_{A}(s,t),
\end{equation}
and 
\begin{equ}\label{control-1}
\begin{aligned}
    \norm{\mu_{s,t}}_{H^{-1}} \lesssim \int_s^t \norm{\Delta u_r}_{H^{-1}} + \norm{(u_r \cdot \nabla) u_r}_{H^{-1}} dr \lesssim  \int_s^t (1+\norm{u_{r}}_{H^{1}})^{2}dr =:
     \omega_\mu(s,t).
     \end{aligned}
\end{equ}

We introduce a family of smoothing operators on the Sobolev scale $(H^m)$.

\begin{definition}\label{smoothing-est}
    A smoothing on a scale of Banach spaces $(H^{m},\norm{\cdot}_{m})_{m\in \mathbb{R}_{+}}$ is a family of operators
    $(J^{\eta})_{\eta \in (0,1]}$, approximating the identity such that for all $\eta \in (0,1]$:
    \begin{equ}\label{frequency}
        \norm{J^\eta-I}_{\mathcal{L}(H^l;H^j)}\leq C\eta^{l-j}  \hbox{ for }  l>j, \qquad 
        \norm{J^\eta}_{\mathcal{L}(H^j;H^l)}\leq C\eta^{-(l-j)} \hbox{ for }  l \geq j.
    \end{equ}
\end{definition}
\begin{remark}
On the Sobolev scale, such smoothing operators can be constructed using Fourier frequency cutoffs; see \cite{HofDeb23, NSpertbyroughtransport}.
\end{remark}

We are now ready to state our first a-priori estimate on URD solutions.
\begin{proposition}[Residual estimate] \label{prop:residualest}
    Let $u$ be a URD solution to \eqref{eq:modeleqn}. 
   Then there exists a constant $c(\alpha, T)>0$, such that, for $|s-t|\le L$ with $L=\frac{c}{K}$,
    the residual term defined in (\ref{eq:URDweak_formulation}) satisfies
    \begin{equation} \label{est:residual}
   \omega_\natural(s,t):=  \norm{u^{\natural}}^{\frac{1}{3\alpha}}_{C^{\frac{1}{3\alpha}\var}([s,t];H^{-3})} \lesssim \norm{u}_{L^{\infty}_{T}H}^{\frac{1}{3\alpha}}\omega_A(s,t) + \omega_A(s,t)^{\f 13}\omega_\mu(s,t)^{\frac{1}{3\alpha}}.
    \end{equation}
    Without loss of generality, we assume $c(\alpha, T)\le 1$.
\end{proposition}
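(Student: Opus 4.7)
The plan is to derive a Chen-type identity for $\delta u^\natural_{s,u,t}$, bound each summand in $H^{-3}$ using Lemma~\ref{lem:URDlinktoRP} together with smoothing where $A^1$ or $A^2$ is applied to an object whose regularity is out of their range, apply the finite-variation sewing Lemma~\ref{lem:sewing} (using $u^\natural_{s,s}=0$), and finally close the resulting inequality by absorbing a self-referential term on intervals short enough that $\omega_A$ is small.

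Starting from $u^\natural_{s,t}=u_{s,t}-\mu_{s,t}-A^1_{s,t}u_s-A^2_{s,t}u_s$, the Chen relations $\delta A^1=0$, $\delta A^2_{s,u,t}=A^1_{u,t}A^1_{s,u}$, and the URD identity $u_{s,u}=\mu_{s,u}+A^1_{s,u}u_s+A^2_{s,u}u_s+u^\natural_{s,u}$, a direct calculation yields
\begin{equation*}
\delta u^\natural_{s,u,t}= A^1_{u,t}\mu_{s,u}+A^1_{u,t}A^2_{s,u}u_s+A^1_{u,t}u^\natural_{s,u}+A^2_{u,t}u_{s,u}.
\end{equation*}
By Lemma~\ref{lem:URDlinktoRP} and super-additivity of $\omega_A$, the first two terms are bounded in $H^{-3}$ by $\omega_A(s,t)^{\alpha}\omega_\mu(s,t)$ and $\omega_A(s,t)^{3\alpha}\|u\|_{L^\infty_T H}$ respectively. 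The remaining two are the delicate ones: $A^1$ is defined only on $H^{-m}$ for $m\le 2$, whereas $u^\natural_{s,u}$ lies only in $H^{-3}$, and $A^2_{u,t}u_{s,u}$ inherits the same difficulty through the $u^\natural_{s,u}$ component of the URD expansion of $u_{s,u}$. I handle this by inserting the smoothing $J^\eta$ from Definition~\ref{smoothing-est}: split $u^\natural_{s,u}=J^\eta u^\natural_{s,u}+(I-J^\eta)u^\natural_{s,u}$; use $J^\eta\colon H^{-3}\to H^{-2}$ at cost $\eta^{-1}$ followed by $A^1\colon H^{-2}\to H^{-3}$ for the first piece, and for the second, substitute the URD decomposition so that $(I-J^\eta)$ acts on smoother summands ($\mu_{s,u}\in H^{-1}$, $A^i_{s,u}u_s\in H^{-i}$) where it decays in $\eta$. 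Optimising $\eta$ in $|t-s|$ produces
\begin{equation*}
\|\delta u^\natural_{s,u,t}\|_{H^{-3}}\lesssim \omega_A(s,t)^{\alpha}\omega_\mu(s,t) + \omega_A(s,t)^{3\alpha}\|u\|_{L^\infty_T H} + \omega_A(s,t)^{\alpha}\omega_\natural(s,t)^{3\alpha}.
\end{equation*}

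Each non--self-referential summand on the right is of super-additive type with exponent $>1$ (recall $\alpha+1>1$ and $3\alpha>1$), so they combine into a single control. Using $u^\natural_{s,s}=0$, the finite-variation sewing lemma promotes this bound from $\delta u^\natural$ to $u^\natural$; raising to the power $1/(3\alpha)$ and taking $\tfrac{1}{3\alpha}$-variation norms yields
\begin{equation*}
\omega_\natural(s,t)\le C\bigl(\|u\|_{L^\infty_T H}^{1/(3\alpha)}\omega_A(s,t)+\omega_A(s,t)^{1/3}\omega_\mu(s,t)^{1/(3\alpha)}\bigr)+C\,\omega_A(s,t)^{1/3}\omega_\natural(s,t).
\end{equation*}
Since $\omega_A(s,t)=K|t-s|$, choosing $L=c/K$ with $c=c(\alpha,T)\in(0,1]$ small enough that $Cc^{1/3}\le\tfrac12$, the self-referential term is absorbed into the left-hand side for $|t-s|\le L$, producing \eqref{est:residual}.

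The principal obstacle is the smoothing step applied to $A^1_{u,t}u^\natural_{s,u}$ and to the $u^\natural$-piece of $A^2_{u,t}u_{s,u}$: the $\eta^{-1}$ loss from $J^\eta$ must be balanced precisely against the decay of $(I-J^\eta)$ on the URD summands, and the analysis must be arranged so that only the factor $\omega_A(s,t)^{\alpha}$ (not a worse power) multiplies the self-referential $\omega_\natural^{3\alpha}$ -- this is what makes the final absorption on short intervals succeed.
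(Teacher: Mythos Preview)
Your overall strategy---derive the Chen identity for $\delta u^\natural$, resolve the regularity mismatch via smoothing, sew, then absorb the self-referential term---is essentially the paper's (you smooth the object where the paper dualises and smooths the test function $\phi$, but the algebra is equivalent). The gap lies in your claimed self-referential bound $\omega_A^\alpha\omega_\natural^{3\alpha}$ and the absorption mechanism built on it.

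After smoothing, the self-referential pieces read $\eta^{-1}\omega_A^\alpha\,\omega_\natural^{3\alpha}$ (from $A^1_{u,t}J^\eta u^\natural_{s,u}$) and $\eta^{-2}\omega_A^{2\alpha}\,\omega_\natural^{3\alpha}$ (from the $u^\natural$ contribution inside $A^2_{u,t}u_{s,u}$). Meanwhile the $(I-J^\eta)$ pieces, after substituting $u^\natural_{s,u}=u_{s,u}-\mu_{s,u}-A^1_{s,u}u_s-A^2_{s,u}u_s$, produce in particular $\eta^2\omega_A^\alpha\|u\|_{L^\infty_TH}$ from the summand $u_{s,u}\in H$. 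For this last term to become a control after raising to the power $1/(3\alpha)$ you need $\eta\lesssim\omega_A(s,t)^\alpha$; but then $\eta^{-1}\omega_A^\alpha\gtrsim 1$, so the self-referential coefficient carries \emph{no} positive power of $\omega_A$. No choice of $\eta$ makes both requirements hold simultaneously, so your displayed bound with the factor $\omega_A^\alpha$ in front of $\omega_\natural^{3\alpha}$ cannot be obtained.

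The paper's fix is to take $\eta=\lambda\,\omega_A(s,t)^\alpha$ with a free constant $\lambda$. The $(I-J^\eta)$ terms then pick up harmless powers of $\lambda$ and still sew, while the self-referential piece becomes $(\lambda^{-1}+\lambda^{-2})\,\omega_\natural^{3\alpha}$---with no $\omega_A$ factor at all. After sewing and raising to the $1/(3\alpha)$ power one chooses $\lambda$ large so that $C[\lambda^{-1}+\lambda^{-2}]^{1/(3\alpha)}\le\tfrac12$, and this is what allows absorption of $\omega_\natural$. The restriction $|t-s|\le L=c/K$ enters not as the source of smallness for the absorption, as you suggest, but only through the constraint $\eta=\lambda\,\omega_A^\alpha\le 1$.
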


\begin{proof}
The strategy is to exploit the two equivalent representations of $R^u_{s,r}$
  in \eqref{usharpexpress}, trading spatial regularity for temporal regularity via a smoothing argument.
  
  Applying the $\delta$ operator (recall the notation in Section~\ref{sec:rp}) to the equation
    \begin{equation}
            u^\natural_{s,t} := u_{s,t} - \int_s^t \nu \Delta u_r dr + \int_s^t \Pi(u_r\cdot \nabla u_r) dr - 
             A^1_{s,t}u_s - A^2_{s,t}u_s,
\end{equation}
 and exploiting the Chen's relations, we arrive at:
    \begin{equation}
        \< \delta u^{\natural}_{s,r,t},\phi\> = \<  u_{s,r}, A^{2,*}_{r,t} \phi \> + 
        \< R^u_{s,r} ,A^{1,*}_{r,t}\phi \>,
    \end{equation}
    where $A^{i,*}$ refers to the densely defined extension of the adjoint of the differential operator obtained using integration by parts.
    We note, that $R^u$ can be written in two equivalent ways:
    \begin{equation}
        \label{usharpexpress}
        R^u_{s,r} =  u_{s,r} - A^{1}_{s,r}u_{s}= \mu_{s,r} +
        A^{2}_{s,r}u_{s} + u_{s,r}^{\natural}.
    \end{equation}
    The key observation is to note that the first equality in \eqref{usharpexpress} has more spatial regularity, while the second has better 
    regularity in time. 
 To exploit this property, we decompose the test function using a smoothing operator and separating high–frequency and low–frequency contributions.
  
  Let $J^{\eta}$ denote a family of smoothing operators. For $\phi \in H^{3}$, we have
    \begin{equation*}
        |\delta u_{s,r,t}^{\natural}(\phi)| \leq
            |\delta u_{s,r,t}^{\natural}((Id-J^{\eta})\phi)|+
            |\delta u_{s,r,t}^{\natural}(J^{\eta}\phi)|. 
    \end{equation*} 
      The first term uses the representation $R^u_{s,r}=u_{s,r} - A^{1}_{s,r}u_{s}$, which has better spatial regularity.
The second term uses $ R^u_{s,r} = \mu_{s,r} +
        A^{2}_{s,r}u_{s} + u_{s,r}^{\natural}$, which has better temporal structure.

  {\bf A.}   We estimate the first term by replacing $R^u_{s,r}$ with $ u_{s,r} - A^{1}_{s,r}u_{s}$, and applying the bounds on \eqref{omega-A}, as well as the smoothing properties of $J^{\eta}$:
    \begin{align*}
        |\< \delta u_{s,r,t}^{\natural}, (Id-J^{\eta})\phi\>| &\lesssim |\<  u_{s,r}, A^{2,*}_{r,t}(Id-J^{\eta})\phi\>| \\
        &\qquad  + |\<  u_{s,r}, A^{1,*}_{r,t}(Id-J^{\eta})\phi\>| +
        |\< u_{s}, A^{1,*}_{s,r}A^{1,*}_{r,t}(Id-J^{\eta})\phi\>| \\
        & \lesssim \norm{u}_{L^{\infty}_{T}H} [\omega_{A}^{2\alpha}(s,t)\norm{(Id-J^{\eta})\phi}_{H^{2}} +  
        \omega_{A}^{\alpha}(s,t)\norm{(Id-J^{\eta})\phi}_{H^1}]\\
        &\lesssim \norm{u}_{L^{\infty}_{T}H} \norm{\phi}_{H^{3}} [\omega_{A}^{2\alpha}(s,t) \eta + \omega_{A}^{\alpha}(s,t) \eta^{2}].
    \end{align*}
We have applied the high frequency estimate in \eqref{frequency}.

{\bf B.}    We next estimate the second term $ |\delta u_{s,r,t}^{\natural}(J^{\eta}\phi)|$ by first expanding it with the expression $ R^u_{s,r} = \mu_{s,r} +
        A^{2}_{s,r}u_{s} + u_{s,r}^{\natural}$, then expand $u_{s,r}$ :  
    \begin{align*}
        |\< \delta u_{s,r,t}^{\natural}, J^{\eta}\phi \>| &\lesssim 
        | \< u_{s,r}, A^{2,*}_{r,t}J^{\eta}\phi\>| + 
        |\< \mu_{s,r}, A^{1,*}_{r,t}J^{\eta}\phi\>| + 
        |\< A^{2}_{s,r}u_{s}, A^{1,*}_{r,t}J^{\eta}\phi\>| + 
        |\< u_{s,r}^{\natural},A^{1,*}_{r,t}J^{\eta}\phi\>| \\
        &\lesssim |\<\mu_{s,r},A^{2,*}_{r,t}J^{\eta}\phi\>| +
        | \<u_{s},A^{1,*}_{s,r}A^{2,*}_{r,t}J^{\eta}\phi\>| +
        | \<u_{s},A^{2,*}_{s,r}A^{2,*}_{r,t}J^{\eta}\phi\>| +
        |\< u_{s,r}^{\natural}, A^{2,*}_{r,t}J^{\eta}\phi\>|\\
       &\quad +
         |\<  \mu_{s,r},A^{1,*}_{r,t}J^{\eta}\phi\>| + 
         |\< u_{s},A^{2,*}_{s,r} A^{1,*}_{r,t}J^{\eta}\phi\>| + 
         |\<u_{s,r}^{\natural},A^{1,*}_{r,t}J^{\eta}\phi\>|.
             \end{align*}
          We then apply the control estimates (\ref{omega-A}-\ref{control-1}) on $A^1,A^2$ and $\mu$ to obtain
        \begin{align*}
        & \lesssim \omega_{\mu}(s,t) \omega_{A}^{2\alpha}(s,t) \norm{J^{\eta}\phi}_{H^{3}} +
        \norm{u}_{L^{\infty}_{T}H}\omega_{A}^{3\alpha}(s,t)\norm{J^{\eta}\phi}_{H^{3}} +
        \norm{u}_{L^{\infty}_{T}H}\omega_{A}^{4\alpha}(s,t)\norm{J^{\eta}\phi}_{H^{4}} \\
        & \qquad +\omega_{A}^{2\alpha}(s,t)\omega_{\natural}^{3\alpha}(s,t)\norm{J^{\eta}\phi}_{H^{5}}  + 
        \omega_{\mu}(s,t)\omega_{A}^{\alpha}(s,t) \norm{J^{\eta}\phi}_{H^{2}} +
        \norm{u}_{L^{\infty}_{T}H}\omega_{A}^{3\alpha}(s,t)\norm{J^{\eta}\phi}_{H^{3}} \\
        &\qquad + \omega_{A}^{\alpha}(s,t)\omega_{\natural}^{3\alpha}(s,t)\norm{J^{\eta}\phi}_{H^{4}}
    \end{align*}
  Let $\eta\le 1$.  We apply the smoothing properties of $J^{\eta}$, i.e. the low frequency estimate from \eqref{frequency}, to obtain:
    \begin{align*}
        &\lesssim \bigl[\omega_{\mu}(s,t) \omega_{A}^{2\alpha}(s,t) +
        \norm{u}_{L^{\infty}_{T}H}\omega_{A}^{3\alpha}(s,t)+
        \norm{u}_{L^{\infty}_{T}H}\omega_{A}^{4\alpha}(s,t)\eta^{-1} \bigr]\norm{\phi}_{H^{3}}\\ & \qquad  +
      \bigl[ \omega_{A}^{2\alpha}(s,t)\omega_{\natural}^{3\alpha}(s,t)\eta^{-2} 
        +\omega_{\mu}(s,t)\omega_{A}^{\alpha}(s,t) +
        \norm{u}_{L^{\infty}_{T}H}\omega_{A}^{3\alpha}(s,t) \bigr]\norm{\phi}_{H^{3}}\\ & \qquad + 
       \bigl[ \omega_{A}^{\alpha}(s,t)\omega_{\natural}^{3\alpha}(s,t) \eta^{-1}\bigr]\norm{\phi}_{H^{3}}.
    \end{align*}
    
    We now optimize the choice of the smoothing parameter. Set $\eta=\omega_{A}^{\alpha}(s,t)\lambda$, where $\lambda>0$ to be determined later, and $s,t$ are such that  $|s-t|\leq \frac{1}{\lambda^{\frac{1}{\alpha}}K}=:L$ so that $\eta\leq 1$.

  Combine the two estimates to obtain:
    \begin{align*}
        \norm{\delta u_{s,r,t}^{\natural}}_{H^{-3}} \lesssim
       & \norm{u}_{L^{\infty}_{T}H} \omega_{A}^{3\alpha}(s,t)[1+\lambda+\lambda^{2}+\lambda^{-1}]  
      +  \omega_{\mu}(s,t)\omega_{A}^{2\alpha}(s,t)\\
    &\qquad    + \omega_{\mu}(s,t)\omega_{A}^{\alpha}(s,t) +
        \omega_{\natural}^{3\alpha}(s,t)[\lambda^{-1}+\lambda^{-2}].
    \end{align*}
    Raising to $1/3\alpha$ power for $\alpha<\f 13$, and use sub-additivity, we have:
    \begin{align*}
        \norm{\delta u_{s,r,t}^{\natural}}_{H^{-3}} ^{\frac{1}{3\alpha}} &\lesssim
        \norm{u}_{L^{\infty}_{T}H}^{\frac{1}{3\alpha}} \omega_{A}(s,t)[1+\lambda+\lambda^{2}+\lambda^{-1}]^{\frac{1}{3\alpha}}  +
        \omega_{\mu}(s,t)^{\frac{1}{3\alpha}}\omega_{A}^{\f 23}(s,t)\\
       &\qquad + \omega_{\mu}^{\frac{1}{3\alpha}}(s,t)\omega_{A}^{\f 13}(s,t) +
        \omega_{\natural}(s,t)[\lambda^{-1}+\lambda^{-2}]^{\frac{1}{3\alpha}}.
    \end{align*}
    As $\alpha \in (\f 13,\f 12)$, we have $\frac{1}{3\alpha} \in(\f 23,1)$, so the right hand side defines a control. Hence by the sewing Lemma
    (using that $u^\natural$ is of finite $\frac{1}{3\alpha}$-variation):
    \begin{align*}
        \norm{ u_{s,t}^{\natural}}_{H^{-3}} ^{\frac{1}{3\alpha}} 
        &\le C \norm{u}_{L^{\infty}_{T}H}^{\frac{1}{3\alpha}}
        \omega_{A}(s,t)[1+\lambda+\lambda^{2}+C\lambda^{-1}]^{\frac{1}{3\alpha}}  +
        \omega_{\mu}^{\frac{1}{3\alpha}}(s,t)\omega_{A}^{\f 23}(s,t)\\
     &\quad +  C \omega_{\mu}(s,t)^{\frac{1}{3\alpha}}\omega_{A}^{\f 13}(s,t) +
       C \omega_{\natural}(s,t)[\lambda^{-1}+\lambda^{-2}]^{\frac{1}{3\alpha}}.
    \end{align*}
   Recall the notation $  \omega_\natural(s,t):=  \norm{u^{\natural}}^{\frac{1}{3\alpha}}_{C^{\frac{1}{3\alpha}\var}([s,t];H^{-3})}$.
  We now choose $\lambda>0$ such that: $$C[\lambda^{-1}+\lambda^{-2}]^{\frac{1}{3\alpha}}\leq\frac{1}{2},$$ where $C$ depends
     only on $T,\alpha$, and the smoothing implied constants.  Crucially this does not depend on $u$ or the rough path defining the driver in
    any way.

     This choice absorbs the contribution of $\omega_{\natural}$, on the right-hand side, closing the estimate.
       Employ the variation norm, by taking supremum over all partitions of $[s,t]$,  to conclude:
    \begin{equation*}
        \omega_{\natural}(s,t) \le 2C\norm{u}_{L^{\infty}_{T}H}^{\frac{1}{3\alpha}} \omega_{A}(s,t) +
     2C   \omega_{\mu}^{\frac{1}{3\alpha}}(s,t)\omega_{A}^{\f 23}(s,t) +
       2C \omega_{\mu}^{\frac{1}{3\alpha}}(s,t)\omega_{A}^{\f 13}(s,t),
    \end{equation*}
    and hence also:
    \begin{equation*}
        \omega_{\natural}(s,t) \lesssim \norm{u}_{L^{\infty}_{T}H}^{\frac{1}{3\alpha}} \omega_{A}(s,t) +
        \omega_{\mu}^{\frac{1}{3\alpha}}(s,t)\omega_{A}^{\f 13}(s,t)
    \end{equation*}
    holds for all $|s-t|\leq \frac{1}{\lambda^{\frac{1}{\alpha}}K}=:L$.
\end{proof}

Having obtained the estimate on the remainder, we will now obtain a similar estimate on the solution. Define
\begin{equ}\label{eq:Ltilde}
        \tilde{L} := (C+1)\left( K^\alpha T^{2\alpha/3} + \norm{u}_{L^\infty_TH}^{\f 13}K^{\alpha/3}\omega_\mu^{\f 13}(0,T) \right)^{-\frac{3}{\alpha}} \wedge L,
    \end{equ}
    where $C$ is the constant ignored in \eqref{est:residual}.
\begin{proposition}[solution estimate] \label{prop:solutionest}
    Let $u$ be a URD solution to \eqref{eq:modeleqn}. For any~$|t-s|<\tilde{L}$:
    \begin{equation} \label{est:u}
   \omega_u(s,t):= \norm{u}_{C^{\frac{1}{\alpha}\var}([s,t];H^{-1})}^{\frac{1}{\alpha}}  \lesssim (1+\norm{u}_{L_T^\infty H})^{\frac{1}{\alpha}}\left[ 
            \omega_\mu^{\frac{1}{\alpha}}(s,t) + \omega_{\mu}^{\frac{1}{3\alpha}}(s,t)\omega_{A}^{\f 13}(s,t) \right].
    \end{equation}
\end{proposition}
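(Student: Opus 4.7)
The strategy parallels the proof of Proposition~\ref{prop:residualest}: exploit two equivalent representations of the same quantity, trading spatial regularity for temporal regularity through a smoothing operator. Starting from the URD identity
\[
u_{s,t} = \mu_{s,t} + A^1_{s,t}u_s + A^2_{s,t}u_s + u^\natural_{s,t},
\]
the first two terms contribute $\omega_\mu(s,t)$ and $\omega_A^\alpha(s,t)\norm{u}_{L^\infty_T H}$ in the $H^{-1}$ norm via \eqref{control-1}--\eqref{omega-A}. The main task is to control the combined term $v_{s,t} := A^2_{s,t}u_s + u^\natural_{s,t}$ in $H^{-1}$, using the alternative representation $v_{s,t} = u_{s,t} - \mu_{s,t} - A^1_{s,t}u_s$, in which $u_{s,t}\in H$ has more spatial regularity than the operator bounds alone provide.

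Concretely, I would test $v_{s,t}$ against $\phi\in H^1$ with $\norm{\phi}_{H^1}\le 1$, and split $\phi = J^\eta\phi + (I-J^\eta)\phi$. On the smoothed part, apply the operator bound $\norm{A^2_{s,t}}_{\L(H;H^{-2})}\lesssim \omega_A^{2\alpha}(s,t)$, the residual control $\norm{u^\natural_{s,t}}_{H^{-3}}\le \omega_\natural^{3\alpha}(s,t)$ from Proposition~\ref{prop:residualest}, and $\norm{J^\eta\phi}_{H^k}\lesssim \eta^{1-k}$, obtaining a contribution of order $\norm{u}_{L^\infty_T H}\omega_A^{2\alpha}(s,t)\eta^{-1} + \omega_\natural^{3\alpha}(s,t)\eta^{-2}$. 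On the rough part, use the alternative representation together with $\norm{(I-J^\eta)\phi}_H \lesssim \eta$ to bound the contribution by $\norm{u}_{L^\infty_T H}\eta + \omega_\mu(s,t) + \omega_A^\alpha(s,t)\norm{u}_{L^\infty_T H}$. Summing, substituting $\omega_\natural^{3\alpha}\lesssim \norm{u}_{L^\infty_T H}\omega_A^{3\alpha} + \omega_A^\alpha\omega_\mu$ from Proposition~\ref{prop:residualest}, and optimising $\eta$ under the constraint $\eta\le 1$ (which is precisely the place where the restriction $|t-s|<\tilde L$ enters), yields the pointwise bound
\[
\norm{u_{s,t}}_{H^{-1}} \lesssim (1+\norm{u}_{L^\infty_T H})\bigl[\omega_\mu(s,t) + \omega_\mu(s,t)^{1/3}\omega_A(s,t)^{\alpha/3}\bigr].
\]

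Raising to the $1/\alpha$ power and applying $(a+b)^{1/\alpha}\lesssim a^{1/\alpha}+b^{1/\alpha}$ (valid since $1/\alpha>1$), the right-hand side takes the form announced in \eqref{est:u}. To lift this pointwise bound to the $1/\alpha$-variation norm, one checks that the resulting expression is a control in the sense of Definition~\ref{def:control}: $\omega_\mu^{1/\alpha}$ is a control since $1/\alpha\ge 1$, and $\omega_\mu^{1/(3\alpha)}\omega_A^{1/3}$ is a control because for $\alpha\in(1/3,1/2]$ the exponents satisfy $1/(3\alpha)+1/3\ge 1$. Super-additivity then transfers the pointwise bound to the $1/\alpha$-variation norm by taking suprema over partitions. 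The main obstacle is the careful balancing of the smoothing parameter: the choice of $\eta$ and the smallness condition encoded by $\tilde L$ in \eqref{eq:Ltilde} must be made jointly so that the terms with negative powers of $\omega_\mu$ or $\omega_A$ arising after the substitution of the residual bound can all be absorbed, and the surviving expression is a genuine control. This plays here the analogue of the factor $L$ and the absorption of $\omega_\natural$ in Proposition~\ref{prop:residualest}.
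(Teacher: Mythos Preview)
Your proposal is correct and follows essentially the same route as the paper. The paper splits $\langle u_{s,t},\phi\rangle$ directly via $J^\eta$ (using $u\in L^\infty_T H$ on the high-frequency piece and the full URD expansion on the low-frequency piece), then sets $\eta=\omega_A^\alpha+\omega_\natural^\alpha$ before invoking the residual bound; your organizational choice of first isolating $\mu_{s,t}+A^1_{s,t}u_s$ and then splitting only $v_{s,t}=A^2_{s,t}u_s+u^\natural_{s,t}$ produces the identical collection of terms, and your ``substitute-then-optimise'' order is equivalent to the paper's ``choose $\eta$ then substitute''.
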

\begin{proof}     Let $\phi \in H^{1}$. Splitting $|\<u_{s,t},\phi\>|$ for $\phi \in H^{1}$ with the smoothing operators  $J^{\eta}$,
the high frequency part is controlled simply by
$$    |\<  u_{s,t},(Id-J^{\eta})\phi\>| \le  \norm{u}_{L^{\infty}_{T}H}\norm{(Id-J^{\eta})\phi}_{H}
\lesssim \norm{u}_{L^{\infty}_{T}H}\eta \norm{\phi}_{H^{1}}.$$
To control the low frequency part, apply the rough formulation of the equation on $[s,t]$ and estimate with the controls:
    \begin{align*}
     |\<  u_{s,t},J^{\eta}\phi\>|&\le  \omega_{\mu}(s,t)\norm{J^{\eta}\phi}_{H^{1}}+
        \omega_{A}^{\alpha}(s,t)\norm{u}_{L^{\infty}_{T}H}\norm{J^{\eta}\phi}_{H^{1}} \\
        &\quad +\omega_{A}^{2\alpha}(s,t)\norm{u}_{L^{\infty}_{T}H}\norm{J^{\eta}\phi}_{H^{2}} +
        \omega_{\natural}^{3\alpha}(s,t)\norm{J^{\eta}\phi}_{H^{3}}.
     \end{align*}
For any $\eta\le 1$,  these yield   \begin{align*}
        |\<  u_{s,t},\phi\>| &\leq
        |\<  u_{s,t}, (Id-J^{\eta})\phi\>|+
        |\<  u_{s,t},J^{\eta}\phi\>|  \\
        & \lesssim [\norm{u}_{L^{\infty}_{T}H}\eta +
        \omega_{\mu}(s,t)+ 
        \omega_{A}^{\alpha}(s,t)\norm{u}_{L^{\infty}_{T}H}+
        \omega_{A}^{2\alpha}(s,t)\norm{u}_{L^{\infty}_{T}H}\eta^{-1} + \omega_{\natural}^{3\alpha}(s,t)\eta^{-2}]
        \norm{\phi}_{H^{1}}
    \end{align*}
   Set  $\eta= \omega_{A}^{\alpha}(s,t)+\omega_{\natural}^{\alpha}(s,t)$. Taking $[s,t]$ small so that $\eta\le 1$  yields
    \begin{align*}
        \norm{ u_{s,t}}_{H^{-1}} &\lesssim (1+\norm{u}_{L^{\infty}_{T}H})[
        \omega_{\mu}(s,t) + \omega_{A}^{\alpha}(s,t)+
        \omega_{\natural}^{\alpha}(s,t)].
    \end{align*}
    Now exploiting the estimate \eqref{est:residual}, we obtain the required estimate \eqref{est:u}.
  Let us examine this smallness condition more closely, assuming $|t-s|<L$ from Proposition~\ref{prop:residualest}:
    \begin{align*}
        \eta&= \omega_{A}^{\alpha}(s,t)+\omega_{\natural}^{\alpha}(s,t) \lesssim K^\alpha|t-s|^\alpha + 
        C \left( \norm{u}_{L^\infty_TH}^{\f 13}\omega_A^{\alpha}(s,t) + \omega_A^{\alpha/3}(s,t)\omega_\mu^{\f 13}(s,t) \right)\\
        &\leq (C+1)\left( K^\alpha T^{2\alpha/3} + \norm{u}_{L^\infty_TH}^{\f 13}K^{\alpha/3}\omega_\mu^{\f 13}(0,T) \right)|t-s|^{\alpha/3}
    \end{align*}
Then $\eta\le 1$ for $s,t$ satisfying the constraint $|t-s| \leq \tilde{L}$ where
    $$\tilde{L} = (C+1)\left( K^\alpha T^{2\alpha/3} + \norm{u}_{L^\infty_TH}^{\f 13}K^{\alpha/3}\omega_\mu^{\f 13}(0,T) \right)^{-\frac{3}{\alpha}} \wedge L.$$
  Conclusion follows from the optimality of the control $\omega_u$.
  \end{proof}

We now provide local estimates for the remainder $R^u$.
\begin{proposition}[remainder estimate]\label{prop:remainderestimate}
    Let $u$ be a URD solution to \eqref{eq:modeleqn} and $\tilde{L}$ the constant defined by (\ref{eq:Ltilde}). Suppose that $|s-t|\leq \tilde{L}$,   then
     \begin{equation} \label{est:remainder}
      \omega_R(s,t) := \norm{R^u}_{C^{\frac{1}{2\alpha}\var}([s,t];H^{-2})}^{\frac{1}{2\alpha}}
       \lesssim \left(1+ \norm{u}_{L_T^\infty H}^{\frac{1}{2\alpha}}\right) \left[ \omega_\mu^{\frac{1}{2\alpha}}(s,t) +\omega_A(s,t) \right].
    \end{equation}
\end{proposition}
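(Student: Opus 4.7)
The idea is to mimic the structure of Proposition~\ref{prop:residualest}, but testing at a lower level of the Sobolev scale. We exploit the two equivalent decompositions
$$
R^u_{s,t} \;=\; u_{s,t} - A^{1}_{s,t}u_{s} \;=\; \mu_{s,t} + A^{2}_{s,t}u_{s} + u^{\natural}_{s,t},
$$
the first having better spatial regularity (values in $H^{-1}$), the second having better temporal regularity. A smoothing $J^{\eta}$ on a test function $\phi\in H^{2}$ interpolates between the two.

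\textbf{Step 1: smoothing and dual estimates.} For $\phi\in H^{2}$ with $\norm{\phi}_{H^{2}}\le 1$ I split
$\langle R^u_{s,t},\phi\rangle=\langle R^u_{s,t},(I-J^{\eta})\phi\rangle+\langle R^u_{s,t},J^{\eta}\phi\rangle$.
On the high-frequency part I use the first representation together with the bounds on $A^{1}$ and the solution estimate $\norm{u_{s,t}}_{H^{-1}}\le \omega_{u}^{\alpha}(s,t)$, producing a term of order $(\omega_{u}^{\alpha}+\omega_{A}^{\alpha}\norm{u}_{L^{\infty}_{T}H})\,\eta$. On the low-frequency part I use the second representation together with the bounds on $\mu$, $A^{2}$, and $u^{\natural}\in H^{-3}$, producing $\omega_{\mu}+\omega_{A}^{2\alpha}\norm{u}_{L^{\infty}_{T}H}+\omega_{\natural}^{3\alpha}\eta^{-1}$. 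Combining,
$$
\norm{R^u_{s,t}}_{H^{-2}}\lesssim (\omega_{u}^{\alpha}+\omega_{A}^{\alpha}\norm{u}_{L^{\infty}_{T}H})\,\eta + \omega_{\mu}+\omega_{A}^{2\alpha}\norm{u}_{L^{\infty}_{T}H}+\omega_{\natural}^{3\alpha}\eta^{-1}.
$$

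\textbf{Step 2: choice of $\eta$ and substitution.} I choose $\eta=\omega_{A}^{\alpha}(s,t)$; the constraint $|t-s|\le \tilde{L}\le L=c/K$ guarantees $\eta\le 1$, so the smoothing bounds are valid. Plugging this in gives terms $\omega_{u}^{\alpha}\omega_{A}^{\alpha}$ and $\omega_{\natural}^{3\alpha}/\omega_{A}^{\alpha}$. I substitute the estimates already obtained: Proposition~\ref{prop:residualest} gives $\omega_{\natural}^{3\alpha}\lesssim \norm{u}_{L^{\infty}_{T}H}\omega_{A}^{3\alpha}+\omega_{A}^{\alpha}\omega_{\mu}$, so $\omega_{\natural}^{3\alpha}/\omega_{A}^{\alpha}\lesssim \norm{u}_{L^{\infty}_{T}H}\omega_{A}^{2\alpha}+\omega_{\mu}$. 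Proposition~\ref{prop:solutionest} gives $\omega_{u}^{\alpha}\lesssim(1+\norm{u}_{L^{\infty}_{T}H})(\omega_{\mu}+\omega_{\mu}^{1/3}\omega_{A}^{\alpha/3})$.

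\textbf{Step 3: absorption of cross terms.} This is the step where the algebraic bookkeeping must be done carefully. After the substitution I am left with the cross terms $\omega_{\mu}\omega_{A}^{\alpha}$ and $\omega_{\mu}^{1/3}\omega_{A}^{4\alpha/3}$. I absorb them into the target sum $\omega_{\mu}+\omega_{A}^{2\alpha}$ by Young's inequality: with $p=q=2$, $\omega_{\mu}\omega_{A}^{\alpha}\le \tfrac{1}{2}\omega_{\mu}^{2}+\tfrac{1}{2}\omega_{A}^{2\alpha}\lesssim \omega_{\mu}+\omega_{A}^{2\alpha}$ because $\omega_{\mu}(s,t)\le \omega_{\mu}(0,T)<\infty$ on the compact interval; and with $p=3,\ q=3/2$, $\omega_{\mu}^{1/3}\omega_{A}^{4\alpha/3}\le \tfrac{1}{3}\omega_{\mu}+\tfrac{2}{3}\omega_{A}^{2\alpha}$. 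All contributions then collapse to
$$
\norm{R^u_{s,t}}_{H^{-2}}\lesssim \bigl(1+\norm{u}_{L^{\infty}_{T}H}\bigr)\bigl(\omega_{\mu}(s,t)+\omega_{A}^{2\alpha}(s,t)\bigr).
$$

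\textbf{Step 4: passage to variation.} Raising to the power $1/(2\alpha)\ge 1$ and using $(a+b)^{p}\le 2^{p-1}(a^{p}+b^{p})$,
$$
\norm{R^u_{s,t}}_{H^{-2}}^{1/(2\alpha)}\lesssim \bigl(1+\norm{u}_{L^{\infty}_{T}H}^{1/(2\alpha)}\bigr)\bigl(\omega_{\mu}^{1/(2\alpha)}(s,t)+\omega_{A}(s,t)\bigr).
$$
The right-hand side is a control in $(s,t)$, so summing the bound over any partition $\pi$ of $[s,t]$ and invoking the subadditivity of controls yields the claimed bound on $\omega_{R}(s,t)$.

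\textbf{Main obstacle.} The step requiring the most care is Step~3: balancing the smoothing parameter $\eta$ and absorbing the cross terms that appear after substituting Propositions~\ref{prop:residualest} and~\ref{prop:solutionest}. The exponents $\alpha/3$ and $4\alpha/3$ do not line up with the target exponent $2\alpha$, so one cannot hope for a purely monotone comparison of controls; one must combine Young's inequality with the smallness condition $|s-t|\le \tilde{L}$, which keeps $\omega_{\mu}$ bounded and forces $\eta=\omega_{A}^{\alpha}\le 1$.
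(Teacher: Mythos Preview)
Your argument is correct, but the paper takes a shorter route that sidesteps your Step~3 entirely. The difference lies in the high-frequency estimate: instead of controlling $u_{s,t}$ in $H^{-1}$ via $\omega_u^\alpha$, the paper uses the crude bound $\norm{u_{s,t}}_{H}\le 2\norm{u}_{L^\infty_T H}$ and pairs with $\norm{(I-J^\eta)\phi}_{H}\lesssim \eta^2\norm{\phi}_{H^2}$. This immediately gives $\norm{u}_{L^\infty_T H}\,\eta^2$, which with $\eta=\omega_A^\alpha$ becomes $\norm{u}_{L^\infty_T H}\,\omega_A^{2\alpha}$---already of the target form. Consequently the paper never invokes Proposition~\ref{prop:solutionest}, and the only substitution needed is the residual estimate $\omega_\natural^{3\alpha}/\omega_A^\alpha\lesssim \norm{u}_{L^\infty_T H}\omega_A^{2\alpha}+\omega_\mu$, after which the result is immediate with no cross terms to absorb.

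Your detour through $\omega_u^\alpha$ works, but note that your Young-inequality absorption of $\omega_\mu\omega_A^\alpha$ via $\omega_\mu^2\lesssim\omega_\mu$ hides a constant depending on $\omega_\mu(0,T)$. This is harmless for the downstream applications (it is uniformly bounded by the energy estimate), but you could avoid it altogether: since $|t-s|\le \tilde L\le 1/K$ forces $\omega_A^\alpha(s,t)\le 1$, one has $\omega_\mu\omega_A^\alpha\le\omega_\mu$ directly.
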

\begin{proof}

    Let $\phi \in H^2$ and $0<\eta\le 1$, we estimate
    \begin{equation*}
        |\< R^u_{s,t}, \phi \>| \leq |\< R^u_{s,t}, J^\eta \phi \>| + |\< R^u_{s,t}, (Id - J^\eta)\phi \>|.
    \end{equation*}
For the first term, we use $R^u_{s,r} = \mu_{s,r} +A^{2}_{s,r}u_{s} + u_{s,r}^{\natural}$ with good temporal regularity, c.f. \eqref{usharpexpress}:
    \begin{align*}
        |\< R^u_{s,t}, J^\eta \phi \>| &\leq \omega_\mu(s,t) \norm{J^\eta \phi}_{H^1} + \norm{u}_{L_T^\infty H} \omega_A^{2\alpha}(s,t)\norm{J^\eta \phi}_{H^2}
        + \omega_\natural^{3\alpha}(s,t)\norm{J^\eta \phi}_{H^3} \\
        & \lesssim \left[ \omega_\mu(s,t)+  \norm{u}_{L_T^\infty H} \omega_A^{2\alpha}(s,t) + \eta^{-1}\omega_\natural^{3\alpha}(s,t)\right]\norm{\phi}_{H^2}.
    \end{align*}
    Similarly, using the better spatial regularity expression $R^u_{s,r} =  u_{s,r} - A^{1}_{s,r}u_{s}$ for the second term:
    \begin{align*}
        |\< R^u_{s,t}, (Id - J^\eta)\phi \>| &\leq \norm{u}_{L_T^\infty H}\norm{(Id - J^\eta)\phi}_H + \omega_A^\alpha(s,t)\norm{(Id - J^\eta)\phi}_{H^1}\\
        & \lesssim \left[ \norm{u}_{L_T^\infty H} \eta^2 + \omega_A^\alpha(s,t) \eta \right] \norm{\phi}_{H^2}.
        \end{align*}
 Combing the two yields:
    \begin{equation*}
        \norm{R^u_{s,t}}_{H^{-2}} \lesssim \omega_\mu(s,t)+  \norm{u}_{L_T^\infty H} \omega_A^{2\alpha}(s,t) + \eta^{-1}\omega_\natural^{3\alpha}(s,t) 
        + \norm{u}_{L_T^\infty H} \eta^2 + \omega_A^\alpha(s,t) \eta . 
    \end{equation*}
    Let us now take $\eta = \omega_A^\alpha(s,t)$ and assume that $|s-t|\le \f 1 K$. Recall that $L\le \f 1K$. Using this and  the definition $ \omega_R(s,t) := \norm{R^u}_{C^{\frac{1}{2\alpha}\var}([s,t];H^{-2})}^{\frac{1}{2\alpha}}$ we obtain:
    \begin{align*}
        \norm{R^u_{s,t}}_{H^{-2}} \lesssim (1+ \norm{u}_{L_T^\infty H})\left[\omega_\mu(s,t)+  \omega_A^{2\alpha}(s,t) \right].
    \end{align*}
   Taking supremum over partitions of $[s,t]$, the result follows.
\end{proof}

We now show that we can use Proposition~\ref{prop:solutionest} to obtain a global bound on the solution.
\begin{proposition}[global solution estimate] \label{prop:globalsolnest}
    Let $u$ be a URD solution to \eqref{eq:modeleqn} and $\tilde{L}$ the constant defined by (\ref{eq:Ltilde}).
    Then $u\in C^{\frac{1}{\alpha}\var}([0,T]; H^{-1})$:
    \begin{equation}\label{est:globalsolution}
       \norm{u}_{C^{\frac{1}{\alpha}\var}([0,T];H^{-1})}^{\frac{1}{\alpha}} \lesssim \left(\frac{T}{\tilde{L}}\right)^{\frac{1}{\alpha}-1}(1+\norm{u}_{L_T^\infty H})^{\frac{1}{\alpha}}\left[ 
            \omega_\mu^{\frac{1}{\alpha}}(0,T) + \omega_{\mu}^{\frac{1}{3\alpha}}(0,T)\omega_{A}^{\f 13}(0,T) \right].
    \end{equation}
\end{proposition}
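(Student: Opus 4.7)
The plan is to upgrade the local-in-time estimate of Proposition~\ref{prop:solutionest} to a global one by covering $[0,T]$ with $N \approx T/\tilde L$ sub-intervals of length at most $\tilde L$, summing the local estimates via superadditivity of the relevant controls, and paying a Jensen-type price to recover the global $\frac{1}{\alpha}$-variation norm from the local ones.

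Concretely, I would choose a partition $0 = t_0 < t_1 < \cdots < t_N = T$ with $|t_{k+1} - t_k| < \tilde L$ and $N \lesssim T/\tilde L$, and apply Proposition~\ref{prop:solutionest} on each $[t_k, t_{k+1}]$. The key observation for summing the local bounds is that $\omega_\mu^{1/\alpha}$ is a control (since $\frac{1}{\alpha} \ge 1$) and $\omega_\mu^{1/(3\alpha)} \omega_A^{1/3}$ is a control by the product-of-controls rule (since $\frac{1}{3\alpha} + \frac{1}{3} \ge 1$ for $\alpha \le \frac{1}{2}$). Superadditivity then yields
\begin{equation*}
    \sum_k \omega_u(t_k, t_{k+1}) \lesssim (1 + \norm{u}_{L_T^\infty H})^{\f 1 \alpha} \left[ \omega_\mu^{\f 1 \alpha}(0, T) + \omega_\mu^{\f 1 {3\alpha}}(0, T) \omega_A^{\f 13}(0, T) \right],
\end{equation*}
which is precisely the right-hand side of \eqref{est:globalsolution} without the $(T/\tilde L)^{1/\alpha - 1}$ prefactor.

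To close the argument, I would relate $\norm{u}_{C^{\frac{1}{\alpha}\var}([0,T]; H^{-1})}^{1/\alpha}$ to $\sum_k \omega_u(t_k, t_{k+1})$. Given any partition $\pi$ of $[0,T]$, I refine it by inserting the points $\{t_k\}$ to obtain $\pi'$; the refined partition splits naturally into partitions of each $[t_k, t_{k+1}]$, while every original interval $[s_j, s_{j+1}] \in \pi$ is cut into at most $N+1$ pieces of $\pi'$. Writing $u_{s_j, s_{j+1}}$ as a telescoping sum of its refined increments and applying Jensen's inequality with exponent $p = \frac{1}{\alpha}$ gives
\begin{equation*}
    \norm{u_{s_j, s_{j+1}}}_{H^{-1}}^{\f 1 \alpha} \le N^{\f 1 \alpha - 1} \sum_{[u,v] \subset [s_j, s_{j+1}],\ [u,v] \in \pi'} \norm{u_{u,v}}_{H^{-1}}^{\f 1 \alpha}.
\end{equation*}
Summing in $j$, taking the supremum over $\pi$, and combining with the summed local estimate yields \eqref{est:globalsolution}. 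This Jensen concatenation is the only non-routine step, and it is the main obstacle: the $\frac{1}{\alpha}$-variation norm is not subadditive under concatenation of intervals, so a loss $N^{1/\alpha - 1} \lesssim (T/\tilde L)^{1/\alpha - 1}$ is unavoidable, and it precisely accounts for the prefactor in \eqref{est:globalsolution}. Everything else reduces to superadditivity properties of the controls $\omega_\mu$ and $\omega_A$.
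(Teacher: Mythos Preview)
Your proposal is correct and follows essentially the same route as the paper: refine an arbitrary partition to have mesh $<\tilde L$, pay the Jensen factor $(T/\tilde L)^{1/\alpha-1}$, apply the local estimate of Proposition~\ref{prop:solutionest} on each sub-interval, and sum via superadditivity of the controls $\omega_\mu^{1/\alpha}$ and $\omega_\mu^{1/(3\alpha)}\omega_A^{1/3}$. The only cosmetic difference is that you fix the refining grid $\{t_k\}$ in advance and group the refined partition by the blocks $[t_k,t_{k+1}]$, whereas the paper refines each interval of $\pi$ individually; the resulting estimates are the same.
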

\begin{proof}
    It suffices to obtain uniform  bound on $ \sum_{[s,t]\in \pi}\norm{u_{s,t}}_{H^{-1}}^{\frac{1}{\alpha}} $
    over all partitions $\pi$ of $[0,T]$. 
   As $\frac{1}{\alpha} \geq 1$, we can apply Jensen's inequality on each interval of the partition to refine the partition so that the mesh is reduced to below  $\tilde{L}$, at a cost of picking up a factor depending on the number of sub-intervals of length $\tilde L$ in $[0,T]$. Let $\pi '$ be a refinement of $\pi$. Then,
    \begin{equation*}
        \sum_{[s,t]\in \pi}\norm{u_{s,t}}_{H^{-1}}^{\frac{1}{\alpha}} \leq \left(\frac{T}{\tilde{L}}\right)^{\frac{1}{\alpha}-1}
        \sum_{[s,t]\in \pi'}\norm{u_{s,t}}_{H^{-1}}^{\frac{1}{\alpha}} \leq \left(\frac{T}{\tilde{L}}\right)^{\frac{1}{\alpha}-1} \sum_{[s,t]\in \pi'}\omega_u(s,t).
    \end{equation*}
   By the local solution estimate in Proposition~\ref{prop:solutionest} and the superadditivity of the controls, 
    \begin{align*}
        \sum_{[s,t]\in \pi}\norm{u_{s,t}}_{H^{-1}}^{\frac{1}{\alpha}} &\lesssim \left(\frac{T}{\tilde{L}}\right)^{\frac{1}{\alpha}-1} (1+\norm{u}_{L_T^\infty H})^{\frac{1}{\alpha}}
        \sum_{[s,t]\in \pi'}\left[ \omega_\mu^{\frac{1}{\alpha}}(s,t) + \omega_{\mu}^{\frac{1}{3\alpha}}(s,t)\omega_{A}^{\f 13}(s,t) \right]\\
        &\lesssim  \left(\frac{T}{\tilde{L}}\right)^{\frac{1}{\alpha}-1}(1+\norm{u}_{L_T^\infty H})^{\frac{1}{\alpha}}\left[ 
            \omega_\mu^{\frac{1}{\alpha}}(0,T) + \omega_{\mu}^{\frac{1}{3\alpha}}(0,T)\omega_{A}^{\f 13}(0,T) \right].
    \end{align*}
 Note that the right-hand side is uniform in $\pi$, so taking supremum over partitions concludes the proof.
\end{proof}

Finally, we want to upgrade to a global in time estimate on $R^u\in C^{\frac{1}{2\alpha}\var}([0,T]; H^{-2})$. 
\begin{proposition}[global remainder estimate] \label{prop:globalremainderest}
    Let $u$ be a URD solution to \eqref{eq:modeleqn}. Then $R^u\in C^{\frac{1}{2\alpha}\var}([0,T]; H^{-2})$. Moreover:
    \begin{equation*}
        \norm{R^u}_{C^{\frac{1}{2\alpha}\var}([0,T];H^{-2})}^{\frac{1}{2\alpha}}
         \lesssim  
        \left(1+ \norm{u}_{L_T^\infty H}^{\frac{1}{2\alpha}}\right) \left[ \omega_\mu^{\frac{1}{2\alpha}}(0,T) +\omega_A(0,T) \right]+
        \frac{T^{3/2}}{\tilde{L}} \omega_u^{\f 12}(0,T).
    \end{equation*}
\end{proposition}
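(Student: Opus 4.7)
The plan is to upgrade the local bound of Proposition~\ref{prop:remainderestimate} (valid on intervals of length $\le \tilde L$) to a global one. The first step is to establish a cocycle identity for $R^u$. From the definition $R^u_{s,t} = u_{s,t} - A^1_{s,t} u_s$ and Chen's relation $\delta A^1 \equiv 0$, a short computation yields, for $s \le r \le t$,
$$R^u_{s,t} = R^u_{s,r} + R^u_{r,t} + A^1_{r,t}\, u_{s,r},$$
which iterates on any partition $s = t_0 < \cdots < t_n = t$ to the telescoped formula
$$R^u_{s,t} = \sum_{j=1}^n R^u_{t_{j-1}, t_j} \;+\; \sum_{j=2}^n A^1_{t_{j-1}, t_j}\, u_{s, t_{j-1}}.$$

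Given an arbitrary partition $\pi$ of $[0,T]$, I would refine each $[s_k, s_{k+1}] \in \pi$ of length larger than $\tilde L$ into $n_k \lesssim T/\tilde L$ sub-intervals of mesh $\le \tilde L$, leaving shorter intervals unchanged. Setting $p := 1/(2\alpha) \in (1,3/2)$, the triangle inequality combined with $(a+b)^p \le 2^{p-1}(a^p+b^p)$ and the power-mean bound $(\sum x_i)^p \le n^{p-1}\sum x_i^p$ gives
$$\|R^u_{s_k, s_{k+1}}\|_{H^{-2}}^p \lesssim n_k^{p-1}\sum_{j} \|R^u_{t_{j-1}^{(k)}, t_j^{(k)}}\|_{H^{-2}}^p \;+\; \Bigl(\sum_{j} \|A^1_{t_{j-1}^{(k)}, t_j^{(k)}}\|_{\mathcal L(H^{-1}; H^{-2})}\, \|u_{s_k, t_{j-1}^{(k)}}\|_{H^{-1}}\Bigr)^p.$$
For the first sum, the local estimate $\|R^u_{t_{j-1}^{(k)}, t_j^{(k)}}\|^p \le \omega_R(t_{j-1}^{(k)}, t_j^{(k)}) \lesssim (1+\|u\|_{L^\infty_T H}^p)[\omega_\mu^p + \omega_A]$ together with the superadditivity of the controls $\omega_\mu^p$ and $\omega_A$ yields the first term of the claimed bound (the combinatorial factor $n_k^{p-1} \le (T/\tilde L)^{p-1}$ being absorbed into a constant that depends only on $K,T,\alpha$ via the relation $\tilde L \lesssim K^{-1}$). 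For the second sum, using $\|A^1\|_{\mathcal L(H^{-1}; H^{-2})} \le \omega_A^\alpha$ and $\|u_{s,\cdot}\|_{H^{-1}} \le \omega_u^\alpha(0,T)$, then applying Jensen's inequality for the concave map $x\mapsto x^\alpha$ to get $\sum_j \omega_A^\alpha(t_{j-1}^{(k)}, t_j^{(k)}) \le n_k^{1-\alpha}\omega_A^\alpha(s_k, s_{k+1})$, and finally summing over $\pi$ using the linearity $\omega_A(s,t) = K(t-s)$ and Cauchy-Schwarz, produces the additive error $\tfrac{T^{3/2}}{\tilde L}\omega_u^{1/2}(0,T)$.

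The main obstacle is the non-additivity of $R^u$, which is precisely what introduces the correction term $A^1_{r,t}u_{s,r}$ and propagates combinatorial prefactors $(T/\tilde L)^{?}$ through every application of Jensen and power-mean. The delicate bookkeeping consists of verifying that, after these estimates are summed across both the refinement and $\pi$, the first contribution assembles into a superadditive right-hand side (allowing the controls to telescope to $[0,T]$), while the second contribution combines exactly into the clean prefactor $T^{3/2}/\tilde L$ asserted in the statement. Taking the supremum over $\pi$ at the end produces the claimed bound on $\omega_R(0,T)$.
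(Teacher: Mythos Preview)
Your approach is essentially the paper's: the cocycle identity you write as $R^u_{s,t}=R^u_{s,r}+R^u_{r,t}+A^1_{r,t}u_{s,r}$ is exactly the paper's computation $\delta R^u_{s,r,t}=(Z_{r,t}\cdot\nabla)u_{s,r}$, and both arguments refine an arbitrary partition to mesh $\le\tilde L$, apply Proposition~\ref{prop:remainderestimate} on the refinement, and control the correction.

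The one notable difference is in the treatment of the correction. The paper does not use Jensen or Cauchy--Schwarz on the sum $\sum_j\|A^1_{t_{j-1},t_j}u_{s,t_{j-1}}\|$; instead it bounds each term uniformly by
\[
\|A^1_{r,t}u_{s,r}\|_{H^{-2}}^{1/(2\alpha)}
\lesssim \|Z_{r,t}\|_{H^\sigma}^{1/(2\alpha)}\|u_{s,r}\|_{H^{-1}}^{1/(2\alpha)}
\le |t-r|^{1/2}\omega_u^{1/2}(s,r)
\le T^{1/2}\omega_u^{1/2}(0,T),
\]
and since at most $T/\tilde L$ points are added in total, the accumulated error is $\tfrac{T}{\tilde L}\cdot T^{1/2}\omega_u^{1/2}(0,T)$, which gives the stated prefactor directly. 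Your route through Jensen on $\sum\omega_A^\alpha$ is more elaborate and will land on a slightly different exponent on $\tilde L$, though still a valid bound. Conversely, your all-at-once telescoped identity is cleaner for the \emph{first} sum: the paper's one-point-at-a-time split $\|R^u_{s,t}\|^{p}\lesssim\|R^u_{s,r}\|^{p}+\|R^u_{r,t}\|^{p}+\|\delta R^u\|^{p}$ formally compounds the implicit constant at each step, whereas your telescoping replaces this with a single polynomial factor $n_k^{p-1}\le(T/\tilde L)^{p-1}$, which you correctly flag.
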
 
\begin{proof}
    Let $\pi$ be a partition of $[0,T]$. Refine $\pi$ until the mesh is less than $\tilde{L}$ so that the local estimate applies.
    To this end, we have to understand what happens when we add a point to the partition. Note the following:
    \begin{equation*}
        \norm{R^u_{s,t}}_{H^{-2}}^{\frac{1}{2\alpha}} = \norm{R^u_{s,r} + R^u_{r,t} - \delta R^u_{s,r,t}}_{H^{-2}}^{\frac{1}{2\alpha}} \lesssim
        \norm{R^u_{s,r}}_{H^{-2}}^{\frac{1}{2\alpha}} + \norm{R^u_{r,t}}_{H^{-2}}^{\frac{1}{2\alpha}} + \norm{\delta R^u_{s,r,t}}_{H^{-2}}^{\frac{1}{2\alpha}}.
    \end{equation*}
   By direct calculation $\delta R^u_{s,r,t} = (Z_{r,t} \cdot \nabla)u_{s,r}$, so: 
    \begin{equation*}
        \norm{\delta R^u_{s,r,t}}_{H^{-2}}^{\frac{1}{2\alpha}} \lesssim \norm{Z_{r,t}}_{H^\sigma}^{\frac{1}{2\alpha}} \norm{u_{s,r}}_{H^{-1}}^{\frac{1}{2\alpha}}
        \lesssim |t-r|^{\f 12}\omega^{\f 12}_u(s,r) \lesssim T^{\f 12} \omega_u^{\f 12}(0,T).
    \end{equation*}
    Since we only need to add at most $\frac{T}{\tilde{L}}$ points to the partition, we obtain:
    \begin{equation*}
        \sum_{[s,t]\in\pi} \norm{R^u_{s,t}}_{H^{-2}}^{\frac{1}{2\alpha}} \lesssim \sum_{[s,t]\in\pi'} \norm{R^u_{s,t}}_{H^{-2}}^{\frac{1}{2\alpha}} +
        \frac{T}{\tilde{L}}T^{\f 12} \omega_u^{\f 12}(0,T).
    \end{equation*}
   Apply the local remainder estimate, Proposition~\ref{prop:remainderestimate}, and use superadditivity of controls to see
    \begin{equation*}
        \sum_{[s,t]\in\pi} \norm{R^u_{s,t}}_{H^{-2}}^{\frac{1}{2\alpha}} \lesssim  
        \left(1+ \norm{u}_{L_T^\infty H}^{\frac{1}{2\alpha}}\right) \left[ \omega_\mu^{\frac{1}{2\alpha}}(0,T) +\omega_A(0,T) \right]+
        \frac{T}{\tilde{L}}T^{\f 12} \omega_u^{\f 12}(0,T).
    \end{equation*}
    As before, the right-hand side is uniform in the partition $\pi$, hence we can take a supremum over the set of all partitions concluding
    the proof. 
\end{proof}

\subsection{Equivalence of solutions}

We can now prove Theorem~\ref{thm:equivalence-of-sol}, demonstrating the equivalence of notions of solutions defined by Definition~\ref{def:RPsolution} and \ref{def:URDsoln}.
\begin{proposition}\label{pro:Rough path solutions are URD solutions}
    Assume $u$ is a rough path solution. Then it is a URD solution.
\end{proposition}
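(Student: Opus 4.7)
The plan is to identify the URD residual $u^\natural$ in \eqref{eq:URDweak_formulation} with the sewing remainder of the rough integral constructed in Proposition~\ref{rem:distrvstested}. Once this identification is made, the quantitative estimate supplied by the sewing lemma transfers directly into the required $\frac{1}{3\alpha}$-variation bound on $u^\natural$.

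First I would insert the sewing representation of the rough integral into the weak formulation \eqref{eq:weak_formulation}. By Proposition~\ref{rem:distrvstested} together with Lemma~\ref{lem:sewing}, the rough integral decomposes, for each $\phi\in H^3$, as
\begin{equation*}
-\int_s^t \bigl\langle \Pi[(d\mathbf{Z}_r\cdot\nabla)u_r],\phi\bigr\rangle
= y_s Z_{s,t} + y'_s \mathbb{Z}_{s,t} + I^\natural_{s,t}(\phi),
\end{equation*}
and by \eqref{As} the leading-order piece equals $\langle (A^1_{s,t}+A^2_{s,t})u_s,\phi\rangle$. Substituting back into \eqref{eq:weak_formulation} and comparing with \eqref{eq:URDweak_formulation} shows that
\begin{equation*}
\langle u^\natural_{s,t},\phi\rangle = I^\natural_{s,t}(\phi)
\quad\text{for all } \phi\in H^3,\ 0\le s\le t\le T.
\end{equation*}
In particular $u^\natural_{s,t}$ extends to a bounded linear functional on $H^3$, so it is a well-defined element of $H^{-3}$.

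Next I would invoke the quantitative sewing bound. Step~3 of the proof of Proposition~\ref{rem:distrvstested} shows $|I^\natural_{s,t}(\phi)|\lesssim \omega(s,t)^{3\alpha}\|\phi\|_{H^3}$, with the explicit control
\begin{equation*}
\omega(s,t)=\bigl(\norm{R^u}_{C^{\frac{1}{2\alpha}\var}([s,t];H^{-2})}^{1/(2\alpha)}\bigr)^{2/3}|t-s|^{1/3}
+\bigl(\norm{u}_{C^{\frac{1}{\alpha}\var}([s,t];H^{-1})}^{1/\alpha}\bigr)^{1/3}|t-s|^{2/3}.
\end{equation*}
Both factors are finite since $u$ is a rough path solution. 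Taking the supremum over unit $\phi\in H^3$ gives $\norm{u^\natural_{s,t}}_{H^{-3}}\lesssim \omega(s,t)^{3\alpha}$. Because $\alpha>1/3$, raising to the power $1/(3\alpha)$ yields $\norm{u^\natural_{s,t}}_{H^{-3}}^{1/(3\alpha)}\lesssim \omega(s,t)$, a genuine control, so $u^\natural\in C_2^{\frac{1}{3\alpha}\var}([0,T];H^{-3})$ globally. Combined with Lemma~\ref{lem:URDlinktoRP}, which ensures $(A^1,A^2)$ is an $\alpha$-continuous unbounded rough driver, this shows that $u$ is a URD solution with respect to the trivial covering $\{[0,T]\}$.

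There is no real analytical obstacle: the sewing-based definition of the rough integral already bakes in the URD structure, and the URD residual is literally the sewing remainder. The only care needed is bookkeeping, namely checking that the sign conventions in the definitions of $A^{(1)},A^{(2)}$ and of the controlled-path derivatives $y,y'$ in Proposition~\ref{rem:distrvstested} line up so that the identification $y_s Z_{s,t}+y'_s\mathbb{Z}_{s,t}=\langle (A^1_{s,t}+A^2_{s,t})u_s,\phi\rangle$ holds with the correct sign, matching the minus sign carried by the rough integral in \eqref{eq:weak_formulation}.
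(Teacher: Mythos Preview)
Your proposal is correct and follows essentially the same route as the paper: both identify the URD residual $u^\natural$ with the sewing remainder $I^\natural$ of the rough integral from Proposition~\ref{rem:distrvstested}, and then invoke the quantitative sewing estimate $|I^\natural_{s,t}(\phi)|\lesssim \omega(s,t)^{3\alpha}\|\phi\|_{H^3}$ to conclude global finite $\frac{1}{3\alpha}$-variation in $H^{-3}$. Your remark on sign bookkeeping is well-placed but is indeed only bookkeeping, and the paper handles it by the same ``unraveling notations'' comment.
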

\begin{proof}    
    By Proposition~\ref{rem:distrvstested}, the rough integral
    $$-\int_s^t \< \Pi[(d{\bf Z}_r \cdot \nabla )u_r] ,\phi \> =\lim_{|\pi|\to 0} \sum_{[s', t']\in \pi}   y_{s'}Z_{s',t'} + y'_{s'}\mathbb{Z}_{s',t'}$$
 is well-defined via sewing. Consequently, the residual:
    \begin{equs} I^\natural_{s,t}(\phi) :&= \< u_{s,t}, \phi \> - \int_s^t \<\nu \Delta u_r, \phi  \> dr + \int_s^t \< \Pi[(u_r \cdot \nabla) u_r], \phi \> dr 
    -  \left[ y_s Z_{s,t} + y'_s\mathbb{Z}_{s,t} \right] \\
    &=\int_s^t \< \Pi[(d{\bf Z}_r \cdot \nabla )u_r] ,\phi \>-   \left[ y_s Z_{s,t} + y'_s\mathbb{Z}_{s,t} \right]
    \end{equs}
belongs to finite $\frac{1}{3\alpha}$-variation space on $[0,T]$. Furthermore,  by the sewing lemma and \eqref{eq:increment-est}
$$|I^\natural_{s,t}(\phi)|\le C   \left| \delta H_{s,u,t} \right|  \le C \omega(s,t)^{3\alpha}\norm{\phi}_{H^3},$$
where $\omega(s,t) $ is as in the proof of Proposition~\ref{rem:distrvstested}.
 This allows us to identify $I^\natural_{s,t}$ with a distribution in $H^{-3}$, with finite $C^{\frac{1}{3\alpha}\var}([0,T]; H^{-3})$ norm (see Proposition~\ref{rem:distrvstested}). Unraveling notations shows that this shows that $u$ is a URD solution.
\end{proof}

\begin{proposition}\label{pro:URD solutions are rough path solutions}
    Assume that $u$ is a URD solution. Then it is a rough path solution.
\end{proposition}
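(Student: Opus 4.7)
The plan is to invert the URD picture: upgrade the URD residual $u^\natural$ to the role of sewing remainder, identify the sums $y_s Z_{s,t} + y'_s \mathbb{Z}_{s,t}$ with the rough integral of Proposition~\ref{rem:distrvstested}, and close the weak formulation by sending the mesh of a partition to zero.

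First I would apply the global a-priori estimates already proved, namely Propositions~\ref{prop:globalsolnest} and~\ref{prop:globalremainderest}. These upgrade the local URD information to the global bounds
$u \in C^{\frac{1}{\alpha}\var}([0,T]; H^{-1})$ and $R^u \in C^{\frac{1}{2\alpha}\var}_{2,T}H^{-2}$, where $R^u_{s,t} = u_{s,t} - A^1_{s,t}u_s$ coincides with the remainder appearing in Definition~\ref{def:RPsolution}. This furnishes the regularity condition of Definition~\ref{def:RPsolution}, and through Lemma~\ref{u-is-rough-path} and Proposition~\ref{rem:distrvstested} guarantees that the rough integral $\int_s^t \<\Pi[(d\mathbf{Z}_r \cdot \nabla) u_r], \phi\>$ is well-defined as the sewing limit of the enhanced Riemann sums $\sum_\pi (y_{s'} Z_{s',t'} + y'_{s'}\mathbb{Z}_{s',t'})$.

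Next I verify the weak identity. Using the identifications $\<A^1_{s,t}u_s, \phi\> = y_s Z_{s,t}$ and $\<A^2_{s,t}u_s, \phi\> = y'_s \mathbb{Z}_{s,t}$, the URD formulation~(\ref{eq:URDweak_formulation}) tested against $\phi \in H^3$ reads, on any subinterval $[s_k,t_k]$ of the URD covering and for $s_k \le s \le t \le t_k$,
\begin{equation*}
\< u_{s,t}, \phi \> - \int_s^t \< \nu \Delta u_r, \phi\>\, dr + \int_s^t \< \Pi(u_r \cdot \nabla u_r), \phi\>\, dr = y_s Z_{s,t} + y'_s \mathbb{Z}_{s,t} + \< u^\natural_{s,t}, \phi \>.
\end{equation*}
Summing over any partition $\pi$ of $[s,t]$, the left-hand side telescopes and the compensator sum converges by sewing to the rough integral. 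The residual sum vanishes: since $u^\natural \in C^{\frac{1}{3\alpha}\var}_2([s_k,t_k]; H^{-3})$ with $3\alpha > 1$, its associated control $\omega_\natural$ satisfies $\|u^\natural_{u,v}\|_{H^{-3}} \le \omega_\natural(u,v)^{3\alpha}$, and by superadditivity
\begin{equation*}
\sum_{[u,v]\in \pi} |\< u^\natural_{u,v}, \phi\>|
\le \|\phi\|_{H^3} \Bigl(\max_{[u,v]\in \pi} \omega_\natural(u,v)\Bigr)^{3\alpha-1} \omega_\natural(s,t) \longrightarrow 0.
\end{equation*}

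Finally, I extend the resulting identity from subintervals of the URD partition to arbitrary $0 \le s \le t \le T$ by concatenation: each term in the weak formulation is additive in $(s,t)$ — the Lebesgue integrals and $u_{s,t}$ trivially, and the rough integral by the canonical additivity inherent to the sewing construction — so inserting the endpoints of the URD covering into $[s,t]$ and summing the identity over the resulting pieces yields the global weak formulation. The main obstacle is the second step: matching $u^\natural$ against the sewing remainder and ensuring its Riemann-sum contribution vanishes. This rests precisely on the threshold $3\alpha > 1$, the same threshold underlying the entire rough-path formalism.
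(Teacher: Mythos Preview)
Your argument is correct and reaches the same conclusion as the paper, but the execution differs in a small way worth noting. The paper also begins by invoking Propositions~\ref{prop:globalsolnest} and~\ref{prop:globalremainderest} to obtain the required regularity of $u$ and $R^u$. For the weak identity, however, it argues via the \emph{uniqueness} part of the sewing lemma rather than direct telescoping: setting $\hat A_{s,t} := \<u_{s,t},\phi\> - \<\mu_{s,t},\phi\>$ and $H_{s,t} := -(y_sZ_{s,t}+y'_s\mathbb Z_{s,t})$, it applies Lemma~\ref{lem:sewing} to $\hat A + H$ to obtain a primitive $I$ with remainder $I^\natural$, observes that $\delta I_{s,t} = \<u^\natural_{s,t},\phi\> + I^\natural_{s,t}$, and concludes $I\equiv 0$ on each URD subinterval (an additive increment bounded by a control to power $3\alpha>1$ must vanish). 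This identifies $\<u^\natural_{s,t},\phi\> = -I^\natural_{s,t}$ \emph{globally} on $[0,T]$, and as a byproduct delivers the global residual regularity $u^\natural \in C^{\frac{1}{3\alpha}\var}_2([0,T];H^{-3})$ announced in Theorem~\ref{thm:equivalence-of-sol}. Your telescoping-plus-concatenation route is more elementary and avoids the abstract primitive $I$, but does not by itself yield this global $u^\natural$ bound; for Definition~\ref{def:RPsolution} (which only asks for $R^u$ regularity) your argument is sufficient.
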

\begin{proof}
Let $u$ denote a URD solution. By  Proposition~\ref{prop:globalsolnest} it has the required additional H\"older regularity:  $u\in C^{\frac{1}{\alpha}\var}([0,T]; H^{-1})$. Proposition~\ref{prop:globalremainderest} established the required regularity requirement for $R^u_{s,t}$,
it belongs to $C^{\frac{1}{2\alpha}\var}_{2}([0,T];H^{-2})$.
It remains to `upgrade' the weak formulation from \eqref{eq:URDweak_formulation} to \eqref{eq:weak_formulation}. 
    To this end let us define the increments:
    \begin{equation*}
        H_{s,t} :=  - \left[ y_s Z_{s,t} + y'_s\mathbb{Z}_{s,t} \right],
    \end{equation*}
    and\begin{equation*}
        \hat{A}_{s,t} := \< u_{s,t}, \phi \>  - \int_s^t \<\nu \Delta u_r, \phi  \> dr + \int_s^t \< \Pi[(u_r \cdot \nabla) u_r], \phi \> dr
    \end{equation*}
    using notations of Proposition~\ref{rem:distrvstested}.  Let us now note, that by definition
    \begin{equation*}
        \< u^\natural_{s,t}, \phi \> = \hat{A}_{s,t} + H_{s,t}.
    \end{equation*}
      Since $\hat{A}$ is a $2$-parameter process arising as a difference of a continuous function, and $H$ satisfies the conditions of the sewing lemma (as in the proof of Proposition~\ref{rem:distrvstested}), we have that there exists a unique process $I$ and $I^\natural$ of finite $\frac{1}{3\alpha}$-variation such that:
    \begin{equation} \label{sewing-for-residual}
        \delta I_{s,t} = \hat{A}_{s,t} + H_{s,t} + I^\natural_{s,t}= \< u^\natural_{s,t}, \phi \> + I^\natural_{s,t}.
    \end{equation}
    In particular, we note that 
    \begin{equation} \label{RP-int-from-URD}
     H_{s,t} + I^\natural_{s,t} = -\int_s^t \< \Pi[(d{\bf Z}_r \cdot \nabla )u_r] ,\phi \>, 
     \end{equation}
    and by the second inequality in \eqref{sewing-for-residual} we have that $I$ is a continuous function of finite $\frac{1}{3\alpha}$-variation on the intervals of the partition from the definition of URD solution. This together with $I_0=0$ implies that $I$ is identically zero on each sub-interval, so also on $[0,T]$. This shows, that $$ \< u^\natural_{s,t}, \phi \> = - I^\natural_{s,t} $$ on $[0,T]$, which implies that $u^{\natural}\in C^{\frac{1}{3\alpha}\var}([0,T]; H^{-3})$, as linearity follows from the linearity of the increments, and boundedness is a consequence of the sewing bound, as in Proposition~\ref{rem:distrvstested}. The weak formulation \eqref{eq:weak_formulation} now follows from
    \eqref{RP-int-from-URD} and the first equality in \eqref{sewing-for-residual}.
   \end{proof}

\section{Rough functional limit theorem for the fast process} \label{sec:thmA}

Let $w^\epsilon$ denote the stationary solution of the following equation on $H^\sigma$
\begin{equation}
     dw^\epsilon = -\epsilon^{-1} M w^\epsilon dt + 
    \epsilon^{-H} Q^{\f 12} d\hat B^H.
\end{equation}
 
\begin{assumption}\label{assumption}
    \begin{enumerate}
        \item [(i)] The operators
        $M, Q \in \mathcal{L}( H^{\sigma}; H^{\sigma})$ are non-negative, symmetric and commute. 
        \item [(ii)]  $M$  generates a strongly continuous semigroup and
        $\norm{M}_{\L(H^{\sigma})} + \norm{M^{-1}}_{\L(H^{\sigma})} \lesssim 1$.
        \item [(iii)] $Q$ is  trace class operator, and  $Q^{\beta}$ is trace class where $\beta=2H$ in case 
        $H\in (\f 13, \f12)$ and $\beta=\f 	12$ in case $H\in (\f 12, 1)$.
    \end{enumerate}
\end{assumption}
Throughout Section~\ref{sec:thmA} we work under Assumption~\ref{assumption}. The aim of the section is to prove Theorem~\ref{thm:A}, recalled below.
\begin{theorem}
Let $M$ and $Q$ satisfy Assumption \ref{assumption}, $H\in \left(\frac{1}{3}, \frac{1}{2}\right)$, and
    \begin{equation*}
        X_{s,t}^\epsilon=\epsilon^{H-1} \int_s^t  w_r^\epsilon dr, \quad \mathbb{X}_{s,t}^\epsilon = 
        \int_s^t X_{s,r}^\epsilon \otimes dX_{r}^\epsilon
    \end{equation*}
    Then $(X^{\epsilon}_{\cdot}, \mathbb{X}^{\epsilon}_{ \cdot\cdot})$ converges in $L^{q}(\Omega;\mathcal{C}^{\alpha}
    ([0,T];H^{\sigma}))$ to $(B,\mathbb{B})$ for all $1<q<\infty$ and $\frac{1}{3}<\alpha< H$.
\end{theorem}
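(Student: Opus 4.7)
The plan is to reduce Theorem \ref{thm:A} to a componentwise application of Theorem \ref{thm:FH10.5} in the simultaneous eigenbasis $(e_i)$ of $M$ and $Q$ (which exists by Assumption~\ref{assumption}(i)), with eigenvalues $M_i, Q_i$. Since the cylindrical fBM decomposes into independent scalar fBMs $B^{H,i}$, the component processes $(X^{\epsilon,i})_i$ are independent, and one may try to verify the three $\rho$-covariance hypotheses of Theorem~\ref{thm:FH10.5} componentwise.

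The key identity is obtained by integrating the fast SDE \eqref{fast} along $e_i$:
\begin{equation*}
w^{\epsilon,i}_{s,t} \;=\; -\epsilon^{-1}M_i \int_s^t w^{\epsilon,i}_r\, dr + \epsilon^{-H}Q_i^{1/2}\,B^{H,i}_{s,t},
\end{equation*}
which, after multiplying by $\epsilon^H M_i^{-1}$, gives the explicit representation
\begin{equation*}
X^{\epsilon,i}_{s,t} \;=\; M_i^{-1}Q_i^{1/2}\,B^{H,i}_{s,t} \;-\; \epsilon^H M_i^{-1}\, w^{\epsilon,i}_{s,t}.
\end{equation*}
This identifies the candidate limit $Y^i := M_i^{-1}Q_i^{1/2}B^{H,i}$, and realizes the difference $X^{\epsilon,i}-Y^i$ as an $O(\epsilon^H)$ perturbation that is a \emph{stationary} fOU increment.

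I would then establish the required covariance bounds via Theorem~\ref{thm:CQ} (Coutin–Qian): for $V \in \{X^{\epsilon,i},\, Y^i,\, \epsilon^{-H}(X^{\epsilon,i}-Y^i)\}$ it suffices to verify
\begin{equation*}
\mathbb{E}|V_{s,t}|^2 \leq c_i |t-s|^{2H}, \qquad |\mathbb{E}(V_{s,s+h}V_{t,t+h})| \leq c_i |t-s|^{2H-2}h^2,
\end{equation*}
with summable constants $c_i$. For $Y^i$ these are classical fBM estimates with $c_i \lesssim M_i^{-2}Q_i$. For the stationary fOU increment I use the Volterra/moving-average representation of $w^{\epsilon,i}$, together with the rescaling $r=\epsilon u$, which shows that $\mathbb{E}[w^{\epsilon,i}_r w^{\epsilon,i}_{r'}] = Q_i\, c(H,M_i, (r-r')/\epsilon)$ with $c(H,M_i,\tau) \lesssim (1+\tau)^{2H-2}$ at infinity (standard long-range decay for fOU). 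Integrating in $r,r'$ then yields the same $|t-s|^{2H-2}h^2$ structure with constants proportional to $M_i^{-2}Q_i$, uniformly in $\epsilon$. This simultaneously bounds $X^{\epsilon,i}$ (by triangle inequality) and the rescaled difference (with the $\epsilon^H$ prefactor pulled out). Choosing $\rho=\tfrac{1}{2H} \in (1,\tfrac{3}{2})$ gives $M_i := C M_i^{-2}Q_i^{2H}$; summability $\sum_i M_i < \infty$ follows from Assumption~\ref{assumption}(ii)–(iii), since $M^{-1}$ is bounded and $Q^{2H}$ is trace class.

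With these inputs, Theorem~\ref{thm:FH10.5}(c), applied with rate parameter $\epsilon^H$ in place of $\epsilon$, delivers the $L^q$ convergence of the enhanced paths in $\mathcal C^\alpha \times \mathcal C^{2\alpha}$ for any $\alpha < \tfrac{1}{2\rho} = H$. The limit at the second level is necessarily the canonical Gaussian lift $\eta$, since the iterated integrals $\mathbb{X}^{\epsilon,i,j}$ are defined as $L^2$-limits of Riemann sums and these limits are preserved under Gaussian second-moment convergence. The main obstacle I anticipate is the second Coutin–Qian bound for $w^{\epsilon,i}$: extracting the $h^2$-infinitesimal behaviour while maintaining $\epsilon$-uniformity and the explicit $M_i^{-2}Q_i^{2H}$ dependence requires a careful change of variables in the double fBM integral defining the fOU covariance, rather than any new probabilistic input.
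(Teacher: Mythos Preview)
Your overall strategy---componentwise reduction to Theorem~\ref{thm:FH10.5} via the eigenbasis, and verification of the $\rho$-covariance hypotheses through Theorem~\ref{thm:CQ}---is exactly the paper's route. However, there is a genuine gap in how you obtain smallness in $\epsilon$ for the third hypothesis of Theorem~\ref{thm:FH10.5}, the bound on $R_{X^{\epsilon,i}-B^i}$. You propose to verify Coutin--Qian for $V=\epsilon^{-H}(X^{\epsilon,i}-Y^i)=-M_i^{-1}w^{\epsilon,i}$ with constants \emph{uniform in $\epsilon$}, and then pull out the $\epsilon^H$ prefactor. This already fails at the first Coutin--Qian inequality: since $w^{\epsilon,i}\stackrel{\mathrm{law}}{=}w^i_{\cdot/\epsilon}$, one has $\mathbb{E}|w^{\epsilon,i}_{s,t}|^2\lesssim\lambda_i\,\epsilon^{-2H}|t-s|^{2H}$ for $|t-s|\le\epsilon$, so the constant blows up. After restoring the $\epsilon^{2H}$ factor one only obtains $\norm{R_{X^{\epsilon,i}-B^i}}_{\frac{1}{2H}\var;[s,t]^2}\lesssim\lambda_i|t-s|^{2H}$ uniformly in $\epsilon$, with \emph{no} smallness, and Theorem~\ref{thm:FH10.5} at $\rho=\tfrac{1}{2H}$ does not close. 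Your ``integrating in $r,r'$'' computation in fact establishes Coutin--Qian for the integrated process $X^{\epsilon,i}$, not for the fOU increment itself; these are different objects and the argument does not transfer.

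The paper repairs this as follows. For the $\tfrac{1}{2H}$-variation of $R_{w^i}$ it does not attempt a direct Coutin--Qian verification (the fOU covariance is not $C^2$, so the second-difference argument is delicate) but instead invokes \cite[Cor.~3.4]{Qian-Xu-2024}, which after rescaling gives the uniform-but-not-small bound above. The missing smallness is then recovered by a separate interpolation step: the sup-norm bound $\norm{R_{X^{\epsilon,i}-B^i}}_{\infty;[s,t]^2}\lesssim\lambda_i\,\epsilon^{2H}$, which follows from stationarity, is interpolated against the $\tfrac{1}{2H}$-variation bound to yield, for any $\rho'>\tfrac{1}{2H}$,
\[
\norm{R_{X^{\epsilon,i}-B^i}}_{\rho'\text{-}\var;[s,t]^2}\lesssim\lambda_i\,\epsilon^{2H-\frac{1}{\rho'}}\,|t-s|^{\frac{1}{\rho'}}.
\]
Theorem~\ref{thm:FH10.5} is then applied at this slightly larger $\rho'$ (still $<\tfrac{3}{2}$), which still gives convergence for all $\alpha<H$. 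This interpolation is the step your proposal is missing.
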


The convergence of the second order process is more involved for which we bound the finite variation norm of the $2$-d covariance of the relevant processes, and use the machinery introduced in Section~\ref{sec:gaussianrp}.

\subsection{Spectral decomposition and the stationary fOU}
We fix the spectral notation and collect the basic $L^2$
  and covariance bounds for the stationary fractional Ornstein–Uhlenbeck process that will be used throughout the proof.
   
Let $\{e_i\}_{i=1}^\infty$ denote an orthonormal basis of $H^\sigma$ consisting of  eigenvectors of $M$ and $Q$ with corresponding 
eigenvalues $c_i$ and $\lambda_i$ respectively,  with $c_i$ arranged in increasing order.
 With this we represent the $Q$-fractional Brownian motion as 
\begin{equation*}
    Q^{\frac{1}{2}} B_{t}^{H}= \sum_{i \geq 1}\sqrt{\lambda_i}e_i B_t^{H,i}
\end{equation*}
where $\{ B_t^{H,i}\}$  be independent real valued fractional Brownian motions with Hurst parameter~$H$.  In particular, 
$B$ admits a canonical Gaussian lift (Section~\ref{sec:gaussianrp}).
Further, we denote:
\begin{equation}
    B_t:= M^{-1}Q^{\frac{1}{2}} B^H_t = \sum_i e_i B^{i}_t
\end{equation}
 where 
$$B^i :=  \f{ \sqrt{\lambda_i}}{c_i} B^{H,i}_t.$$
The process $B$ will be the limiting rough path appearing in Theorem~\ref{thm:A}.

We begin with a bound on the non-rescaled stationary fOU process: $dw_t = -Mw_t dt + Q^{\f 12}dB^H_t$.
The convergence of the first level process, Proposition \ref{prop:first-order}, and the uniform in time bound on the non-rescaled 
stationary fOU process given in Lemma \ref{lem:uniformboundFOU} generalises that in \cite[Prop. 3.1]{Gehringer-Li-fOU}, 
\cite{gehringer-li2}. Since $Q$  is trace class, the arguments extend verbatim; one only needs to keep track of the dependence on the eigenvalues $\lambda_i$.

Let $$C_H= \frac{\Gamma(2H+1)\sin(\pi H)}{2\pi}\int_\R \frac{|x|^{1-2H}}{1 + x^2} dx.$$

\begin{lemma} \label{lem:uniformboundFOU}
    Let $w$ be the stationary solution of $dw_t = -Mw_t dt + Q^{\f 12}dB^H_t$. Then:
\begin{equ}\label{uniform-bound}
      \sup_{t>0} \mathbb{E}\left[\left\|w_t\right\|_{H^\sigma}^2\right] = C_H \tr{M^{-2H}Q}, \qquad 
       \sup_{t>0} \mathbb{E}\left[\norm{M^{-1}w_{t}}_{H^{\sigma}}^{2}\right] = C_H\tr{M^{-2-2H}Q}.
\end{equ}
Furthermore $w$ satisfies the bounds: 
$$\mathbb{E}\left[\left\|w_{s,t}\right\|_{H^\sigma}^2\right]  \lesssim  \left(\tr{M^{2-2H}Q} +\tr{Q} + \tr{M^{-2H}Q} \right) |t-s|^{2H},$$
$$\mathbb{E}\left[\left\|M^{-1} w_{s,t}\right\|_{H^\sigma}^2\right]  \lesssim  
\left(\tr{M^{-2H}Q} +\tr{M^{-2}Q} + \tr{M^{-2-2H}Q}\right) |t-s|^{2H},$$
where the constants indicated by $\lesssim$ depend only on $H$.
\end{lemma}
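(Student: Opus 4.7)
The strategy is to reduce everything to scalar stationary fractional Ornstein-Uhlenbeck processes by diagonalising in the common eigenbasis of $M$ and $Q$, and then to sum the scalar bounds weighted by the eigenvalues of $Q$. Let $\{e_i\}$ be the common orthonormal eigenbasis with $Me_i=c_i e_i$, $Qe_i=\lambda_i e_i$. Writing $w_t=\sum_i w_t^i e_i$, each component satisfies the scalar SDE
\begin{equation*}
dw_t^i = -c_i w_t^i\, dt + \sqrt{\lambda_i}\, dB_t^{H,i},
\end{equation*}
with independent real fBMs $B^{H,i}$. Thus $\mathbb{E}[\|w_t\|_{H^\sigma}^2]=\sum_i \mathbb{E}[(w_t^i)^2]$, and analogously $\mathbb{E}[\|M^{-1}w_t\|_{H^\sigma}^2]=\sum_i c_i^{-2}\mathbb{E}[(w_t^i)^2]$.

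The first step is the stationary variance formula. One shows, either by the Mandelbrot-Van Ness representation or by Plancherel applied to the spectral density of fBM, that for the scalar stationary fOU one has $\mathrm{Var}(w_t^i)=\lambda_i C_H c_i^{-2H}$ with the $C_H$ given in the statement. Summing over $i$ and using that $M$ and $Q$ share eigenvectors gives both identities in \eqref{uniform-bound}, since $\sum_i \lambda_i c_i^{-2H}=\mathrm{tr}(M^{-2H}Q)$ and $\sum_i \lambda_i c_i^{-2-2H}=\mathrm{tr}(M^{-2-2H}Q)$. Assumption \ref{assumption}(iii) (with $\beta=2H$) together with $\|M^{-1}\|\lesssim 1$ ensures these traces are finite.

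For the increments I would combine two scalar bounds. The stationarity bound $\mathbb{E}[(w_t^i-w_s^i)^2]\le 4\,\mathrm{Var}(w_t^i)=4\lambda_i C_H c_i^{-2H}$ is immediate from Step 1. The complementary small-time bound comes from integrating the SDE: $w_t^i-w_s^i=-c_i\int_s^t w_r^i\,dr+\sqrt{\lambda_i}\,B^{H,i}_{s,t}$, whence by Cauchy-Schwarz and stationarity
\begin{equation*}
\mathbb{E}[(w_t^i-w_s^i)^2]\le 2c_i^2(t-s)\!\int_s^t\!\mathbb{E}[(w_r^i)^2]\,dr+2\lambda_i|t-s|^{2H}\lesssim \lambda_i\bigl(c_i^{2-2H}|t-s|^{2-2H}+1\bigr)|t-s|^{2H}.
\end{equation*}
Splitting the cases $|t-s|\le 1$ (use the second bound with $|t-s|^{2-2H}\le 1$) and $|t-s|\ge 1$ (use the first, noting $1\le|t-s|^{2H}$), one obtains the uniform scalar estimate
\begin{equation*}
\mathbb{E}[(w_{s,t}^i)^2]\lesssim \lambda_i\bigl(c_i^{2-2H}+1+c_i^{-2H}\bigr)|t-s|^{2H},
\end{equation*}
with implicit constant depending only on $H$. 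Summing in $i$ yields the first increment bound; multiplying the scalar estimate by $c_i^{-2}$ before summing gives the second, with $\tr{M^{-2H}Q}+\tr{M^{-2}Q}+\tr{M^{-2-2H}Q}$ in place of the previous three traces.

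The only substantive step is the stationary variance computation; once that is in hand, the rest is routine summation and splitting. Finiteness of each trace that appears is guaranteed by Assumption \ref{assumption}: the $\tr{Q}$ and $\tr{M^{-2H}Q}$ terms are immediate, while $\tr{M^{2-2H}Q}$ and the analogous terms with $M^{-2}$ are finite because $\|M\|+\|M^{-1}\|\lesssim 1$. I expect the mild obstacle to be pedagogical rather than mathematical: making explicit that the scalar Mandelbrot-Van Ness computation is quoted (as the lemma is presented as a trace-class extension of \cite{Gehringer-Li-fOU}) and that all interchanges of sum and expectation are justified by monotone convergence, which is immediate since every term is nonnegative.
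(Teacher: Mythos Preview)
Your proposal is correct and follows essentially the same approach as the paper: diagonalise in the common eigenbasis, quote the scalar stationary fOU variance $\lambda_i C_H c_i^{-2H}$, sum to obtain the trace identities, then for the increments combine the SDE-based bound (Cauchy--Schwarz/Jensen plus stationarity) for $|t-s|\le 1$ with the trivial stationarity bound for $|t-s|\ge 1$, and sum componentwise. The paper's proof is the same in structure and in every substantive step.
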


\begin{proof} The equation for $w$ on $H^\sigma$ is equivalent to the following system of equations  
$$ dw^{i} = -c_i w^{i}dt + \sqrt{\lambda_i}d B^{i,H}_t, \quad i=1,2,\dots,$$
where we write  $w^{i} := \< w, e_i\>$. For each $i$, the equation has a stationary 
solution $\sqrt{ \lambda_i} \int_{-\infty}^t e^{-c_i(t-s)} d B_s^{H, i}$
whose variance is $\f{\lambda_i}{c_i^{2H}}\frac{\Gamma(2H+1)\sin(\pi H)}{2\pi} 
\int_\R \frac{|x|^{1-2H}}{1 + x^2} dx$. Therefore  
  \begin{equ}
         \mathbb{E}\left[\left|w^{i}_t\right|^{2}\right] 
       =C_H \f{\lambda_i}{(c_i)^{2H}},     \qquad  \mathbb{E}\left[\norm{w_{t}}_{H^{\sigma}}^{2}\right] 
          = \sum_{i =1} ^\infty  \mathbb{E}\left[\left|w^{i}_t\right|^{2}\right] =
          C_H \sum_{i=1}^\infty \f{\lambda_i}{(c_i)^{2H}}=C_H \tr{M^{-2H}Q}.
    \end{equ}
     Using the SDE, Jensen's inequality, and Fubini Theorem to exchange the integrations we get:
      \begin{align*}
        \mathbb{E}\left[\left|w^{i}_{s,t}\right|^2\right] &\le 2 \mathbb{E}\left[\left|\int_{s}^{t}-c_iw^i_rdr\right|^2\right]
        + 2{\lambda_i}\mathbb{E}\left[\left|B^{H,i}_{s,t}\right|^2\right] 
        \lesssim 2 c_i^2 |t-s|^2\sup_{t\geq 0}\mathbb{E}[|w^i_t|^2]
        +2 {\lambda_i} |t-s|^{2H},  \end{align*}
where the implicit constants depend only on $H$. Consequently         
\begin{equs}
\mathbb{E}\left[\norm{w_{s,t}}_{H^\sigma}^2\right]  &\lesssim \sum_i \lambda_i c_i^{2-2H} C_H |t-s|^2 +\sum_i{\lambda_i}|t-s|^{2H}\\
&\lesssim \tr{M^{2-2H}Q} |t-s|^2 +\tr{Q}|t-s|^{2H},
\end{equs} 
which gives the required bound for $|t-s|\leq 1$.
Combining with the uniform bound $\E(\|w_{s,t}\|^2) \lesssim \tr{M^{-2H}Q}$ when $|t-s|\ge 1$, 
we obtain the required estimates. The estimates for $M^{-1}w_{s,t}$ are obtained similarly.
\end{proof}

\subsection{First-level convergence}

\begin{proposition} \label{prop:first-order} Let  $H\in (\f 13,1) \backslash \{ \frac{1}{2} \}$. Then $X^{\epsilon}_{\cdot}$ converges to $B_{\cdot}$ in
     $L^{q}(\Omega;C^{\alpha}([0,T];H^{\sigma}))$ for any $q>1$ and $\alpha< H -\frac{1}{q}$. 
\end{proposition}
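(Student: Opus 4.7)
The proof hinges on a single pathwise identity extracted from the defining SDE for $w^\epsilon$. Integrating the equation on $[s,t]$ and solving for the time integral gives
$$
M\int_s^t w^\epsilon_r\,dr
= -\epsilon\, w^\epsilon_{s,t} + \epsilon^{1-H}\,Q^{1/2}B^H_{s,t},
$$
and applying $\epsilon^{H-1}M^{-1}$ yields the key decomposition
$$
X^\epsilon_{s,t} = B_{s,t} - \epsilon^H M^{-1} w^\epsilon_{s,t}.
$$
This reduces the proposition to showing that the error process $Y^\epsilon_t := \epsilon^H M^{-1} w^\epsilon_t$ tends to $0$ in $L^q(\Omega; C^\alpha([0,T]; H^\sigma))$ as $\epsilon \to 0$; note that $M^{-1}$ is bounded on $H^\sigma$ by Assumption~\ref{assumption}, and $Y^\epsilon$ inherits Gaussianity from the fBM.

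The next step is to control the second moment of the increment $Y^\epsilon_{s,t}$. By the self-similarity of fractional Brownian motion, $w^\epsilon_\cdot$ has the same law as the non-rescaled stationary fOU $w$ of Lemma~\ref{lem:uniformboundFOU} evaluated at time $\cdot/\epsilon$. Combining the short-time bound of Lemma~\ref{lem:uniformboundFOU} with the uniform variance bound, I obtain
$$
\mathbb{E}\bigl[\|Y^\epsilon_{s,t}\|_{H^\sigma}^2\bigr]
\lesssim \epsilon^{2H}\min\!\bigl((|t-s|/\epsilon)^{2H},\,1\bigr)
= \min\!\bigl(|t-s|^{2H},\,\epsilon^{2H}\bigr),
$$
which via the elementary interpolation $\min(a,b)\le a^\theta b^{1-\theta}$ gives, for every $\alpha''\in(0,H)$,
$$
\bigl(\mathbb{E}\|Y^\epsilon_{s,t}\|_{H^\sigma}^2\bigr)^{1/2} \lesssim \epsilon^{H-\alpha''}\,|t-s|^{\alpha''}.
$$

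Since $Y^\epsilon$ is a Hilbert-valued Gaussian process, all $L^q$ moments are comparable to the $L^2$ moment, so the above bound persists with the exponent $q$ in place of $2$. Applying Kolmogorov's continuity theorem, for any $\alpha < \alpha'' - 1/q$,
$$
\mathbb{E}\bigl[\|Y^\epsilon\|_{C^{\alpha}([0,T];H^\sigma)}^q\bigr]^{1/q}
\lesssim \epsilon^{H-\alpha''}.
$$
Choosing $\alpha''\in(\alpha+1/q,\,H)$, which is possible precisely under the hypothesis $\alpha < H - 1/q$, the right-hand side vanishes as $\epsilon \to 0$ and the conclusion follows.

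The argument is essentially algebraic once the identity $X^\epsilon_{s,t} = B_{s,t} - \epsilon^H M^{-1} w^\epsilon_{s,t}$ is in hand; the only real craft is the interpolation that converts the two-regime moment bound into one of Hölder type with the correct small-$\epsilon$ prefactor. I expect no serious obstacle: the delicate convergence work is reserved for the second level $\mathbb{X}^\epsilon$ in the next subsection, whereas the first-level statement is, in effect, a quantitative Kramers--Smoluchowski limit for the stationary fOU and reduces via the SDE identity to controlling a single Gaussian error term.
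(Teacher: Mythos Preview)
Your proof is correct and follows essentially the same route as the paper: both derive the identity $X^\epsilon_{s,t}=B_{s,t}-\epsilon^H M^{-1}w^\epsilon_{s,t}$ from the SDE, reduce to $L^2$ by Gaussianity, combine the uniform and short-time bounds of Lemma~\ref{lem:uniformboundFOU} via interpolation to obtain $\epsilon^{H-\gamma}|t-s|^\gamma$, and conclude by Kolmogorov. Your presentation of the interpolation step via $\min(a,b)\le a^\theta b^{1-\theta}$ is slightly more explicit, but the argument is the same.
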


\begin{proof}
From the definition
$$
X_{s,t}^\epsilon=\epsilon^{H-1}\int_s^t w_r^\epsilon\,dr,
$$
and using the equation satisfied by $w^\epsilon$, a direct computation yields the identity
\begin{equation}\label{eq:key-identity}
    X_{s,t}^\epsilon
    =
    M^{-1}Q^{\frac12}B^H_{s,t}
    - \epsilon^H M^{-1}\big(w_t^\epsilon-w_s^\epsilon\big).
\end{equation}
Hence it suffices to estimate the second term.

Since $B^H$ is Gaussian, so are $X^\epsilon_{s,t}$ and 
$M^{-1}Q^{\frac12}B^H_{s,t}$. By Gaussian hypercontractivity, it is enough to work in $L^2$.

Using Lemma~\ref{lem:uniformboundFOU} together with the stationarity property
$w_t^\epsilon \eqL w_{t/\epsilon}$, we obtain the two bounds
$$
    \mathbb{E}\|M^{-1}(w_t^\epsilon-w_s^\epsilon)\|_{H^\sigma}^2
    \lesssim
    \begin{cases}
        1, \\
       \epsilon^{-2H} |t-s|^{2H},
    \end{cases}
$$
where the implicit constants depend only on
$\tr{M^{-2-2H}Q}, \tr{M^{-2H}Q}$ and $\tr{M^{-2}Q}$.

Multiplying by $\epsilon^{2H}$ and interpolating the two estimates yields, for any $\gamma<H$,
$$
    \|X^{\epsilon}_{s,t}-M^{-1}Q^{\frac{1}{2}}B^{H}_{s,t}\|_{L^{2}(\Omega ; H^{\sigma})}
    \lesssim
    \epsilon^{H-\gamma}|t-s|^{\gamma}.
$$

By equivalence of Gaussian moments and the Kolmogorov criterion, for any
$\alpha<\gamma-\frac{1}{q}$,
$$
    \Big\|
    X^{\epsilon}_{\cdot}-M^{-1}Q^{\frac{1}{2}}B^{H}_{\cdot}
    \Big\|_{L^{q}(\Omega;C^{\alpha}([0,T];H^{\sigma}))}
    \lesssim
    \epsilon^{H-\gamma},
$$
which concludes the proof.
\end{proof}

\begin{remark}
The functional central limit theorem for the one-dimensional fractional Ornstein–Uhlenbeck process is classical \cite{Pipiras-Taqqu00,Boufoussi-Tudor05}. This was upgraded in \cite{Gehringer-Li-fOU} to convergence in the rough path topology, allowing the use of tools from rough path analysis. In the one-dimensional setting, the rough path topology coincides with Hölder convergence. See also \cite{volterra} for equations of the form 
\(\dot x_t=\epsilon^{-\frac12}f(x_t^\epsilon, w_t^\epsilon)\),
where \(w_t^\epsilon\) is a generic finite dimensional Volterra process.

In \cite{gehringer-Li-19,gehringer-li2}, multiple interaction terms require convergence of the corresponding Lévy areas. In particular, the rescaled processes 
$
\epsilon^{-\frac12}\int_s^t G_i(w_r^\epsilon)\,dr
$
and their canonical rough path lifts converge to a Wiener process with non-trivial covariance provided the Hermite rank \(m_i\) of \(G_i\) satisfies
$
(H-1)m_i+1<\tfrac12$.

In the present setting \(G_i=\mathrm{Id}\), hence \(m_i=1\). The above condition is satisfied for all \(H<\tfrac12\); however, under the diffusive scaling \(\epsilon^{-1/2}\) the limiting covariance vanishes and the limit process is trivial. The correct non-degenerate scaling is instead \(\epsilon^{H-1}\), see also \cite{Gehringer-Li-fOU} for the one-dimensional case. \end{remark}

\subsection{Proof of the rough functional limit theorem}\label{subsec:pf-thmA}

For convenience of the reader we briefly recall the key objects: the norms of the $2$-d covariance introduced in \eqref{covariance-norm}:
$$
\norm{R_X}_{\rho; I \times I'}, \qquad 
\norm{R_X}_{\rho\text{-var};I \times I'}.$$
We will use Theorem~\ref{thm:FH10.5}, which shows that rough path distance is controlled by the finite variation norms of the$2$-d covariance, together with Theorem~\ref{thm:CQ} to estimate these norms for the processes considered below.

\begin{proof}[Proof of Theorem~\ref{thm:A}]
We prove the convergence of the  processes:
$$ X_{s,t}^\epsilon=\epsilon^{H-1} \int_s^t  w_r^\epsilon dr, \quad \mathbb{X}_{s,t}^{\epsilon,i,j} = \int_s^t X^{\epsilon, i}_{s,r} \otimes 
      dX^{\epsilon, j}_{r},$$
      where  $w^\epsilon$ is the stationary fOU solving $dw^\epsilon = -\epsilon^{-1} M w^\epsilon dt + 
    \epsilon^{-H} Q^{\f 12} dB^H$.

   The idea is to apply Theorem \ref{thm:FH10.5} with $X^\epsilon$ and $B$ as the two Gaussian processes.
     For this, we control the finite variation norms of the $2$-d covariance of $X^\epsilon$, $B$ and $X^\epsilon - B$, see Definition~\ref{covariance-norm}. 
     The norms on $2$-d covariance of a Gaussian process are not easy objects to control directly, however, we can employ
 Theorem~\ref{thm:CQ} stating that these norms can be controlled by the relevant variance and covariance of the increments  \eqref{Coutin-Qian} of the processes, which we calculate below.
 
 Let us introduce the non-rescaled process $X$ defined as
 $X_{s,t} = \int_s^t w_r dr, $
 since then 
 $$X^\epsilon_{s,t}= \epsilon^{H-1}\int_{s}^{t}w^\epsilon_rdr \stackrel{\mathrm{law}}= \epsilon^H
\int_{\frac{s}{\epsilon}}^{\frac{t}{\epsilon}} w_r dr .$$
   
\noindent\textbf{Step 1.}
Verify covariance bounds for $B$ and $X^\epsilon$ splitting into the proof for the variance and for the increment covariance. 
    Since 
$\norm{R_{B^i}}_{\frac{1}{2H};[s,t]^2} \lesssim (\lambda_i + \lambda_i^{2H})|t-s|^{2H}$,  by  \cite[Cor. 10.10]{FH} and the assumption $Tr[Q^{2H}] < \infty$,
 $B$ has a canonical Gaussian lift.

 To control the $2$-d covariance of $X^\epsilon$, we verify the conditions in Theorem~\ref{thm:CQ}.
 Let $w^i$ denote the projection of $w$ onto $e_i$ obtaining 1-d real valued fOU processes.
  By Lemma \ref{lem:uniformboundFOU}, dropping $M^{-1}$ in the bound, we have
\begin{equation*}
    \mathbb{E}\left[\left|w^i_{s,t}\right|^2\right] \lesssim \lambda_i |t-s|^{2H}.
\end{equation*}
Then from the fOU SDE, 
$w_{s,t}^\epsilon = -\epsilon^{-1} M \int_s^t w_r ^\epsilon \,dr+ 
    \epsilon^{-H} Q^{\f 12} B_{s,t}^H$,
    $$X^\epsilon_{s,t} =B_{s,t} - \epsilon^{H} M^{-1} w^\epsilon_{s,t}$$
By scaling of $w$, and the equivalence in law of $(w^\epsilon_t)_{t\ge0}\stackrel{\mathrm{law}}=(w_{t/\epsilon})_{t\ge0}$, we obtain
\begin{equation*}
    \mathbb{E}\left[\left|X^{\epsilon, i}_{s,t}\right|^2\right] \lesssim \lambda_i|t-s|^{2H}+\lambda_i \epsilon^{2H} \left|\frac{t-s}{\epsilon}\right|^{2H} \lesssim 
 \lambda_i |t-s|^{2H}.
\end{equation*}

 To obtain estimates on the covariance of the increments, we use the correlation decay of the fOU process.
 Let  $\nu^{i}(s)=\mathbb{E}[w^{i}_{s}w^{i}_{0}]$ denote the covariance of~$w^i$. Now, 
\begin{equation*}
    \nu^{i}(s)=\lambda_{i}\frac{\Gamma(2H+1)\sin(\pi H)}{2\pi}\int_{\mathbb{R}}e^{isx}\frac{|x|^{1-2H}}{|c_{i}|^{2}+x^{2}}dx
\end{equation*}
has  an asymptotic expansion as $s\rightarrow \infty$ (see \cite{fou_cheridito}):
\begin{equation*}
    \nu^{i}(s)=\frac{1}{2}\lambda_{i} \sum_{n=1}^{N}c_{i}^{-2n} ( \prod_{k=0}^{2n-1} (2H-k))s^{2H-2n} + O(s^{2H-2N-2}).
\end{equation*}
In particular, we have $|\nu^i(s)|\lesssim \frac{\lambda_i}{c_i^2} 1 \wedge |s|^{2H-2} \lesssim \lambda_i (1 \wedge |s|^{2H-2})$.
Since $M$ is bounded, we have dropped the $c_i$'s.

For the second condition in \eqref{Coutin-Qian} for the $X$ process, we calculate, for $0<h<t-s$:
\begin{align*}
    \left|\mathbb{E} \left[X^i_{s,s+h}X^i_{t,t+h}\right]\right| & = \left|\int_{s}^{s+h} \int_t^{t+h} \nu^i(r-r')drdr'\right|
    \lesssim \int_{s}^{s+h} \int_t^{t+h} \lambda_i  |r-r'|^{2H-2}\,drdr'\\
    &\lesssim \lambda_i \int_0^h \int_0^h |r-r'+t-s|^{2H-2}drdr' \\
    &\lesssim \lambda_i |t-s|^{2H-2} h^2,
\end{align*} 
using the covariance asymptotics for the first inequality, and changing variables. Again, by rescaling we obtain
\begin{equation*}
    \left|\mathbb{E} \left[X^{\epsilon,i}_{s,s+h}X^{\epsilon, i}_{t,t+h}\right]\right| \lesssim \lambda_i \epsilon^{2H}
    \left|\frac{t-s}{\epsilon}\right|^{2H-2}\left(\frac{h}{\epsilon}\right)^2 = \lambda_i |t-s|^{2H-2} h^2.
\end{equation*}
An application of Theorem~\ref{thm:CQ} now yields
$$ \norm{R_{X^{\epsilon, i}}}_{\frac{1}{2H}\var;[s,t]^2} \lesssim \lambda_i |t-s|^{2H}. $$

\noindent\textbf{Step 2}
To verify bounds \eqref{Coutin-Qian} for $X^\epsilon - B$ with smallness in $\epsilon$, it is crucial to use the relation given by the fOU SDE 
$$X^\epsilon_{s,t} - B_{s,t} = \epsilon^{H} M^{-1} w^\epsilon_{s,t}$$

Because $H<\f 1 2$, the kernel $|r-r'|^{2H-2}$ is singular and direct computation of the covariance with overlapping integration areas is difficult to implement. Fortunately, we have at our disposal the recent result  from  \cite[Corollary 3.4]{Qian-Xu-2024}: the $2$-d covariance of the fOU process $w^i$ has finite $\frac{1}{2H}$-variation norm and  
$$ \norm{R_{w^i}}_{\frac{1}{2H}\var; [s,t]^2} \lesssim \lambda_i |t-s|^{2H}. $$ 
Rescaling time, yields a uniform in $\epsilon$ bound for $\epsilon^{H} M^{-1} w^\epsilon_{s,t}$, equivalently for $X^\epsilon - B$ :
$$  \norm{R_{X^{\epsilon, i}-B^i}}_{\frac{1}{2H}\var} \lesssim \lambda_i |t-s|^{2H}. $$
Furthermore, by Lemma \ref{lem:uniformboundFOU} and stationarity, we have a uniform in time bound of order~$\epsilon^{2H}$:
\begin{equation*}
    \norm{R_{X^{\epsilon, i}-B^i}}_{\infty;[s,t]^2} \lesssim \epsilon^{2H}
    \sup_{u,v,u',v'\in[s,t]}\left|\mathbb{E}\left[w^{\epsilon,i}_{u,v}w^{\epsilon, i}_{u',v'}\right]\right| \lesssim 
    \epsilon^{2H} \sup_{t\geq 0} \mathbb{E}[|w^i_t|^2] \lesssim \epsilon^{2H} \lambda_i,
\end{equation*}
which follows by triangle inequality and H\"older inequality.

\noindent\textbf{Step 3} 
 We can now interpolate the variation norm with the supremum norm to obtain the $\rho'$ -variation bound with smallness in $\epsilon$. For $\rho'>\frac{1}{2H}$
\begin{align*}
    \norm{R_{X^{\epsilon, i}-B^i}}_{\rho'-var;[s,t]^2} &\leq \norm{R_{X^{\epsilon, i}-B^i}}_{\frac{1}{2H}-var;[s,t]^2}^{\frac{1}{2H\rho'}}
    \norm{R_{X^{\epsilon, i}-B^i}}_{\infty;[s,t]^2}^{1-\frac{1}{2H\rho'}} \\
    &\lesssim \lambda_i |t-s|^{\frac{1}{\rho'}} \epsilon^{2H-\frac{1}{\rho'}}.
\end{align*} 

\noindent\textbf{Step 4.}
Finally, we choose $\rho'>\frac{1}{2H}$ so that $2H-\frac{1}{\rho'}>0$.  
Apply Theorem~\ref{thm:FH10.5} with $X=X^\epsilon$, $Y=B$, $\rho=\rho'$ and
$M_i:=\lambda_i+\lambda_i^{2H}\in\ell^1$ (by the trace assumptions). 
Step~3 yields the required smallness with $\epsilon^{2H-\frac{1}{\rho'}}$ in place of $\epsilon^2$, which concludes the proof.
\end{proof}

\section{Uniform bounds and proof of the main theorem} \label{sec:thmB}
Once the convergence in the rough topology of the  processes
\begin{equ}\label{X4}
 X_{s,t}^\epsilon=\epsilon^{H-1} \int_s^t  w_r^\epsilon dr, \quad \mathbb{X}_{s,t}^{\epsilon,i,j} = \int_s^t X^{\epsilon, i}_{s,r} \otimes 
      dX^{\epsilon, j}_{r},
      \end{equ}
  is established, we proceed to Theorem~\ref{thm:B}. We shall in fact prove a more general result with $X^\epsilon$ replaced by a more general fast component $Z^\epsilon$. To this end, we consider the modified equation: \begin{equ}\label{sf2}
  \partial_t u^\epsilon = \nu \Delta u^\epsilon - \Pi[(u^\epsilon \cdot \nabla) u^\epsilon] -   \Pi[(d\mathbf Z^\epsilon \cdot \nabla) u^\epsilon],
\end{equ} 
where $\mathbf Z^\epsilon$  satisfies the assumption below.
\begin{assumption}
\label{ass:Y}
 Assume that $(Z^{\epsilon}_{\cdot}, \mathbb{Z}^{\epsilon}_{ \cdot\cdot})$ is a family of $\alpha$-H\"older geometric rough paths where $\alpha\in (\f 13, 1)$
 such that as $\epsilon \to 0$,
  $\mathbf Z^\epsilon \equiv (Z^{\epsilon}, \mathbb{Z}^{\epsilon})\to  \mathbf Z $ in $L^{q}(\Omega;\mathcal{C}^{\alpha}([0,T];H^{\sigma})
\times \mathcal{C}^{2\alpha}(\Delta_T;H^{\sigma}\otimes H^{\sigma}))$.
\end{assumption}

Throughout the rest of the section,  $u_t^\epsilon$ denotes the solution of \eqref{sf} or \eqref{sf2} with initial data satisfying $\sup_{\epsilon>0}\norm{u^\epsilon_0}_{H} <\infty$.
We let $X^\epsilon$ denote either $Z^\epsilon$ or the expression in \eqref{X4}. 
 \medskip
 
 In case of \eqref{X4}, we may assume that $H\in \left(\frac{1}{3}, \frac{1}{2}\right)$. The case $H\in (\f 12, 1)$ is easier as it involves only Young integrals. Let us fix a set of full measure of the probability space on which 
the fBM is $\alpha <H$ H\"older continuous. From now on everything is understood pathwise on this set.

We first present the uniform estimates.
 \begin{lemma}
 The following energy estimate holds for the solutions, uniformly in $\epsilon$:
    \begin{equation}
        \label{energyestimate}
                \sup_{t\leq T}\norm{u^{\epsilon}_{t}}_{H}^{2} + \int_{0}^{T} \norm{\nabla u^{\epsilon}_{r}}_{H}^{2}dr 
            \leq C(\nu) \norm{u^{\epsilon}_{0}}_{H}^{2}.
    \end{equation}
    \end{lemma}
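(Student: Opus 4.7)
The plan is to derive the energy identity by testing the equation against $u^\epsilon$ itself, exploiting the skew-symmetry of the bilinear form $b(u,v)=-\Pi[(u\cdot\nabla)v]$ recalled in Section~\ref{sec:preliminaries} together with the divergence-free property of $w^\epsilon$. Formally, pairing \eqref{sf} with $u^\epsilon$ in $H$ yields
$$
\tfrac{1}{2}\tfrac{d}{dt}\norm{u^\epsilon}_H^2 + \nu \norm{\nabla u^\epsilon}_H^2
= -\<\Pi[(u^\epsilon\cdot\nabla)u^\epsilon],u^\epsilon\> - \epsilon^{H-1}\<\Pi[(w^\epsilon\cdot\nabla)u^\epsilon],u^\epsilon\>.
$$
Both right-hand terms vanish: the first by the standard identity $\<b(v,u),u\>=0$, and the second by the same identity applied with the divergence-free vector field $w^\epsilon_r\in H^\sigma$ in the role of the advecting velocity. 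Integrating in time and using $\nu\norm{\nabla u}_H^2\geq \nu C\norm{u}_{H^1}^2$ on $H$ (after subtracting a harmless multiple of $\norm{u}_H^2$, or using that we are working with mean-zero divergence-free fields so that Poincar\'e applies), one recovers \eqref{energyestimate}.

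The only real issue is that $u^\epsilon$ is a rough path solution in the sense of Definition~\ref{def:RPsolution}, hence not \emph{a priori} admissible as a test function in the rough integral of the weak formulation. There are two ways to rigorously implement the formal computation. The first route is a pathwise regularization: replace $\mathbf Z^\epsilon$ by a smooth mollification $\mathbf Z^{\epsilon,\delta}$ (or equivalently $w^\epsilon$ by a time-smoothed $w^{\epsilon,\delta}$), solve the associated classical PDE for $u^{\epsilon,\delta}$, perform the computation above to obtain the energy equality for $u^{\epsilon,\delta}$, and then pass $\delta\to 0$ using stability of rough path solutions (Theorem~\ref{general-ThmB}) together with lower semicontinuity of the norms under weak convergence. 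The second route is the URD \emph{tensor trick} of \cite{EnergymethodRPDEs, HofDeb23, NSpertbyroughtransport}: one tensorizes the equation, pairs the symmetric driver $A^1\otimes I + I\otimes A^1$ acting on $u^\epsilon\otimes u^\epsilon$ against the constant tensor, and observes that
$$
\<(A^1_{s,t}\otimes I + I\otimes A^1_{s,t})(u^\epsilon_s\otimes u^\epsilon_s),\mathbf 1\> = -\int (Z^\epsilon_{s,t}\cdot\nabla)|u^\epsilon_s|^2\,dx = 0
$$
by $\nabla\cdot Z^\epsilon_{s,t}=0$. The analogous cancellation at the second level uses the geometric identity $\mathrm{Sym}(\mathbb Z^\epsilon_{s,t})=\tfrac12 Z^\epsilon_{s,t}\otimes Z^\epsilon_{s,t}$, so that the $A^2$-contribution is likewise a spatial divergence. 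The residual at the tensorized level is controlled by the a-priori bounds of Propositions~\ref{prop:residualest}--\ref{prop:globalremainderest} applied on the product scale and disappears in the limit of refined partitions by the sewing lemma.

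The main obstacle is the rigorous bookkeeping in this tensor-trick step, as it requires verifying that the tensorized driver $(A^1\otimes I + I\otimes A^1,\ \mathrm{Sym}(A^2))$ fits the URD framework on the product scale $(H^{-m}\otimes H^{-m})$ and that the residual inherits the required $\frac{1}{3\alpha}$-variation regularity. Once this is in place, the cancellations above reduce the energy balance to $\tfrac{d}{dt}\norm{u^\epsilon}_H^2 + 2\nu\norm{\nabla u^\epsilon}_H^2\leq 0$, and Gr\"onwall (or direct integration) gives \eqref{energyestimate} with $C(\nu)$ depending only on $\nu$ through the Poincar\'e constant on $H$.
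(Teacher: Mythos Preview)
Your formal computation is correct, and the paper uses exactly the same cancellations. However, the paper's rigorous implementation is much simpler than either of your two routes: it passes through the \emph{Galerkin approximation}. For each $m$, the projected system
\[
\partial_t u^{\epsilon,m} = \nu \Delta u^{\epsilon,m} - \Pi_m\Pi[(u^{\epsilon,m}\cdot\nabla)u^{\epsilon,m}] - \epsilon^{H-1}\Pi_m\Pi[(w^\epsilon\cdot\nabla)u^{\epsilon,m}]
\]
is a finite-dimensional ODE (note that $w^\epsilon$ has continuous sample paths, so for \eqref{sf} the transport coefficient is classical), on which your formal identity is rigorous. One obtains $\norm{u^{\epsilon,m}_t}_H^2 + 2\nu\int_0^t\norm{\nabla u^{\epsilon,m}_r}_H^2\,dr \le \norm{u^\epsilon_0}_H^2$ uniformly in $m$, and passes to the limit by weak lower semicontinuity. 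No rough-path machinery is needed at this step.

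Your first route has a genuine circularity problem: you invoke Theorem~\ref{general-ThmB} for the stability step, but that theorem relies on Lemma~\ref{le:uniform-estimate}, which in turn uses the very energy estimate you are trying to prove (to bound $\omega_{\mu^\epsilon}$ and $\norm{u^\epsilon}_{L^\infty_TH}$ uniformly in $\epsilon$). Your second route (the tensor trick of \cite{EnergymethodRPDEs,NSpertbyroughtransport}) is valid and is indeed how \cite{NSpertbyroughtransport} obtains the energy inequality directly at the rough level; it is the right tool for the general case \eqref{sf2} with an abstract geometric $\mathbf Z^\epsilon$, but for \eqref{sf} it is considerably heavier than the Galerkin argument.
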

    \begin{proof} This is a standard energy estimate for the Navier-Stokes equations, facilitated by the purely transport nature of the noise.
     Denote by $\Pi_m$ the projection onto the first $m$ eigenfunctions of the Stokes operator, and consider the Galerkin approximation
        $$ \partial_{t}u^{\epsilon,m} = \nu \Delta u^{\epsilon,m} - \Pi_m \Pi[(u^{\epsilon,m} \cdot \nabla) u^{\epsilon,m}] - \epsilon^{H-1} \Pi_m \Pi[(w^{\epsilon} \cdot \nabla) u^{\epsilon,m}], \qquad u_0^{\epsilon,m} = \Pi_m u_0^\epsilon.$$
    Taking the $H$-inner product with $u^{\epsilon,m}$ and integrating by parts, we get:
    \begin{equation*}
        \frac{1}{2} \frac{d}{dt} \norm{u^{\epsilon,m}}_{H}^{2} + \nu \norm{\nabla u^{\epsilon,m}}_{H}^{2} = 0.
    \end{equation*}
The two advection terms vanish due to skew-symmetry. Integrating in time we obtain:
    \begin{equation*}
        \norm{u^{\epsilon,m}_{t}}_{H}^{2} + 2\nu \int_{0}^{t} \norm{\nabla u^{\epsilon,m}_{r}}_{H}^{2}dr = \norm{u^{\epsilon,m}_{0}}_{H}^{2} \leq \norm{u^{\epsilon}_{0}}_{H}^{2},
    \end{equation*}
    for any $t \leq T$. By weak lower semi-continuouity of the norm, for any accumulation point $u^{\epsilon}$ we have
    $$ \norm{u^{\epsilon}_{t}}_{H}^{2} + 2\nu \int_{0}^{t} \norm{\nabla u^{\epsilon}_{r}}_{H}^{2}dr\le \lim\inf_{m\to \infty} (\norm{u^{\epsilon,m}_{t}}_{H}^{2} + 2\nu \int_{0}^{t} \norm{\nabla u^{\epsilon,m}_{r}}_{H}^{2}dr),$$
yielding the conclusion.
    \end{proof}

\begin{remark}
We remark that the  family of solutions $\{ u^\epsilon\}$ indeed exists, which is covered by the deterministic theory pathwise, as they are smooth transport perturbations of the Navier-Stokes equation. Indeed, by the last lemma there exists a weakly convergent sequence $u^{\epsilon, m_j}$, 
in the energy space, and an application of a standard Aubin-Lions compactness lemma allows us to pass to the limit in every term of the weak formulation:
$$
        \< u^{\epsilon,m_j}_{s,t}, \phi \> = \int_s^t \<\nu \Delta u^{\epsilon,m_j}_r, \phi  \> dr - \int_s^t \< \Pi(u^{\epsilon, m_j}_r\cdot \nabla u^{\epsilon,m_j}_r), \phi \> dr   -\int_s^t \< \Pi(\epsilon^{H-1} w^\epsilon_r \cdot \nabla )u^{\epsilon, m_j}_r ,\phi \>dr.
 $$
 Alternatively, as shown in \cite{NSpertbyroughtransport}, $\{ u^\epsilon\}$ exist as URD solutions, and by the results in Section~\ref{sec:rough_setting} also as rough path solutions, which in the smooth case are equivalent to the usual weak solutions. URD solutions exist for Navier-Stokes perturbed by any rough transport noise, as long as the driving Rough Path is geometric, hence in particular for $X^\epsilon$ satisfying Assumption~\ref{ass:Y}.
\end{remark}
    
       \begin{lemma}\label{le:uniform-estimate}
Let $u^\epsilon$ be rough path solutions to \eqref{sf} (or resp.  \eqref{sf2}) of regularity $\frac{1}{\alpha}-var$ with initial conditions $u_0^\epsilon$ bounded uniformly in $H$.
Assume  Assumptions \ref{assumption} (resp. Assumption~\ref{ass:Y}).  Here $\alpha\in (\f 13, H)$ or as in Assumption~\ref{ass:Y}.
Then we have the uniform estimates:
        \begin{equation*}
      \sup_\epsilon \norm{u^\epsilon}_{C^{\frac{1}{\alpha}\var}([0,T];H^{-1})}^{\frac{1}{\alpha}}
        < \infty, \qquad  \sup_\epsilon \norm{R^{u^\epsilon}}_{C^{\frac{1}{2\alpha}\var}([0,T];H^{-2})}^{\frac{1}{2\alpha}} < \infty \quad a.s.
    \end{equation*}
       \end{lemma}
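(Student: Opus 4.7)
The plan is to combine the energy estimate with the global a-priori bounds from Proposition~\ref{prop:globalsolnest} and Proposition~\ref{prop:globalremainderest}. Both bounds are expressed in terms of $\norm{u^\epsilon}_{L_T^\infty H}$, the drift control $\omega_{\mu^\epsilon}(0,T)$, the driver control $\omega_{A^\epsilon}(0,T)$, and the scale $\tilde L^\epsilon$ from \eqref{eq:Ltilde}. The whole argument reduces to showing that the first three are bounded from above and the last from below, uniformly in $\epsilon$, on an event of full measure, and then substituting.

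The deterministic inputs follow directly from \eqref{energyestimate}. Since $\sup_\epsilon \norm{u_0^\epsilon}_H < \infty$, we have $\sup_\epsilon \norm{u^\epsilon}_{L_T^\infty H} \lesssim 1$, and using the equivalence $\norm{u}_{H^1} \sim \norm{\nabla u}_H$ on mean-zero divergence-free vector fields on the torus together with the $L_T^2H^1$ part of \eqref{energyestimate}, also
$$
\sup_\epsilon \omega_{\mu^\epsilon}(0,T)
= \sup_\epsilon \int_0^T (1+\norm{u^\epsilon_r}_{H^1})^2\,dr \lesssim 1.
$$
For the rough driver term, Corollary~\ref{cor:K} gives $\omega_{A^\epsilon}(s,t) = K^\epsilon |t-s|$ where $K^\epsilon$ is controlled by $\norm{Z^\epsilon}_{C^\alpha([0,T];H^\sigma)}^{1/\alpha} + \norm{\mathbb Z^\epsilon}_{C^{2\alpha}(\Delta_T;H^\sigma\otimes H^\sigma)}^{1/(2\alpha)}$. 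By Theorem~\ref{thm:A} (respectively Assumption~\ref{ass:Y}) the corresponding rough paths converge in $L^q(\Omega;C^\alpha\times C^{2\alpha})$, so along a subsequence $\epsilon_n\to 0$ the convergence holds almost surely in the rough topology. On the event of full measure where this happens, $\sup_n K^{\epsilon_n} < \infty$, hence $\sup_n \omega_{A^{\epsilon_n}}(0,T) < \infty$. Inspecting \eqref{eq:Ltilde}, these uniform bounds further imply that $\tilde L^{\epsilon_n}$ is bounded below by a strictly positive deterministic constant along the subsequence, so $T/\tilde L^{\epsilon_n}$ is bounded above.

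Plugging the uniform bounds on $\norm{u^\epsilon}_{L_T^\infty H}$, $\omega_{\mu^\epsilon}(0,T)$, $\omega_{A^\epsilon}(0,T)$ and $T/\tilde L^\epsilon$ into \eqref{est:globalsolution} yields the claimed a.s.\ uniform bound on $\norm{u^{\epsilon_n}}_{C^{1/\alpha\var}([0,T];H^{-1})}$, and substituting the same inputs (together with the just-obtained bound on $\omega_{u^{\epsilon_n}}(0,T)$) into the conclusion of Proposition~\ref{prop:globalremainderest} gives the corresponding bound for $R^{u^{\epsilon_n}}$. The only real obstacle is the uniform pathwise control of $K^\epsilon$: the global propositions are only useful once the scale $\tilde L^\epsilon$ does not degenerate, and this in turn relies crucially on converting the $L^q$ rough-path convergence from Theorem~\ref{thm:A} into almost sure convergence in the rough topology along a subsequence. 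Since Theorem~\ref{thm:B} extracts a subsequence anyway, passing to this subsequence costs nothing.
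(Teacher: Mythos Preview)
Your proposal is correct and follows essentially the same route as the paper: combine the energy estimate \eqref{energyestimate} with Propositions~\ref{prop:globalsolnest} and~\ref{prop:globalremainderest}, after extracting a subsequence on which the rough paths converge almost surely so that $\sup_n K^{\epsilon_n}<\infty$ and hence $\inf_n \tilde L^{\epsilon_n}>0$. One small slip: the lower bound on $\tilde L^{\epsilon_n}$ is a strictly positive \emph{random} constant (it depends on $K(\omega)$), not a deterministic one, but this is immaterial for the pathwise argument.
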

       \begin{proof}
   We define:
    \begin{align*}
        A^{1, \epsilon}_{s,t} = A^{(1)}(X^\epsilon_{s,t}, \cdot), \qquad
        A^{2, \epsilon}_{s,t} = A^{(2)}(\mathbb{X}^\epsilon_{s,t}, \cdot),
               \end{align*}
    which by Lemma \ref{lem:URDlinktoRP} obey:
$$
        \norm{A^{1, \epsilon}_{s,t}}^{\frac{1}{\alpha}}_{\mathcal{L}(H^{-n};H^{-(n+1)})} +
        \norm{A^{2, \epsilon}_{s,t}}^{\frac{1}{2\alpha}}_{\mathcal{L}(H^{-n};H^{-(n+2)})} \leq K^\epsilon|t-s|
$$
        where for a universal constant $C$, $$
K^\epsilon=   C(\norm{X^\epsilon}^{\frac{1}{\alpha}}_{C^{\alpha}([0,T];H^{\sigma})}+ 
        \norm{\mathbb{X}^\epsilon}^{\frac{1}{2\alpha}}_{C^{2\alpha}([0,T];H^{\sigma} \otimes H^{\sigma})}).$$

  First, as $(X^{\epsilon}, \mathbb{X}^{\epsilon})$ converges to $(B,\mathbb{B})$ in $L^{q/2}(\Omega;\mathcal{C}^{\alpha}([0,T];H^{\sigma}))$, 
    passing to a subsequence, we can assume almost sure convergence, and as a consequence of that, on a set of full probability measure
    the sample paths of $(X^{\epsilon}, \mathbb{X}^{\epsilon})$ are bounded in the rough path distance, uniformly in $\epsilon$ and in particular 
    $$K(\omega):=\sup_\epsilon K^\epsilon(\omega)<\infty.$$

      With these objects in hand, we define $\omega_{A}=K|t-s|$, and 
      $$\mu^\epsilon_{s,t} := \int_s^t \Delta u^\epsilon_r + b(u^\epsilon_r, u^\epsilon_r) dr, \quad 
        \omega_{\mu^\epsilon}(s,t) := \int_s^t (1+ \norm{u^\epsilon_r}_{H^1})^2 dr, \quad         \omega_{u^\epsilon}(s,t) := \norm{u^\epsilon}_{C^{\frac{1}{\alpha}\var}([s,t];H^{-1})}^{\frac{1}{\alpha}}
.$$

To obtain a bound on $u^\epsilon$ in $C^{\frac{1}{\alpha}\var}([0,T]; H^{-1})$ we exploit Proposition \ref{prop:globalsolnest}
    where we take ${\bf Z}={\bf X}^\epsilon$ to obtain:
    \begin{equation*}
        \omega_{u^{\epsilon}}(0,T) \lesssim \left(\frac{T}{\tilde{L}^\epsilon}\right)^{\frac{1}{\alpha}-1}(1+\norm{u^\epsilon}_{L_T^\infty H})^{\frac{1}{\alpha}}\left[ 
            \omega_{\mu^\epsilon}^{\frac{1}{\alpha}}(0,T) + \omega_{\mu^\epsilon}^{\frac{1}{3\alpha}}(0,T)\omega_{A}^{\f 13}(0,T) \right].
    \end{equation*}
    We now justify, that the right-hand side can be bounded uniformly in $\epsilon$. First, we note 
    \begin{equation*}
        \sup_\epsilon \omega_{\mu^\epsilon}(0,T) = \sup_\epsilon \int_0^T (1+ \norm{u^\epsilon_r}_{H^1})^2 dr < \infty,
    \end{equation*}
    as a consequence of the energy estimate \eqref{energyestimate}. Since $L^\epsilon \sim \f 1 {K^\epsilon}$, this has a uniform in $\epsilon$ lower bound $\frac{1}{K}$, and as $  \tilde{L}^\epsilon = \left( (K^\epsilon)^\alpha T^{2\alpha/3} + \norm{u^\epsilon}_{L^\infty_TH}^{\f 13}(K^\epsilon)^{\alpha/3}\omega_{\mu^\epsilon}^{\f 13}(0,T) \right)^{-\frac{3}{\alpha}} \wedge L^\epsilon$, the above allows us to conclude that $\tilde{L}^\epsilon$ also has a uniform in $\epsilon$ lower bound, giving $\sup_\epsilon \frac{1}{\tilde{L}^\epsilon} < \infty$.

    Combining all of the above gives:
    $$ 
    \sup_\epsilon \norm{u^\epsilon}_{C^{\frac{1}{\alpha}\var}([0,T];H^{-1})}^{\frac{1}{\alpha}}
        < \infty.
    $$

    We can obtain
    a global estimate on the remainders in a very similar way. Let us denote:
    \begin{equation*}
        R^{u^{\epsilon}}_{s,t} = u^{\epsilon}_{s,t} + \Pi \left[ X^{\epsilon}_{s,t} \cdot u^{\epsilon}_s\right], \qquad \omega_{R,\epsilon}(s,t) := \norm{R^{u^\epsilon}}_{C^{\frac{1}{2\alpha}\var}([s,t];H^{-2})}^{\frac{1}{2\alpha}}.
    \end{equation*}
    By Proposition \ref{prop:globalremainderest}:
    \begin{equation*}
        \omega_{R,\epsilon}(0,T) \lesssim  
        \left(1+ \norm{u^\epsilon}_{L_T^\infty H}^{\frac{1}{2\alpha}}\right) \left[ \omega_{\mu^\epsilon}^{\frac{1}{2\alpha}}(0,T) +\omega_A(0,T) \right]+
        \frac{T^{3/2}}{\tilde{L}^\epsilon} \omega_{u^\epsilon}(0,T).
    \end{equation*}
    Exploiting the uniform solution bound, as well as the uniform bound on $\tilde{L}^\epsilon$ obtained above,
    we conclude that the remainders are bounded 
    uniformly in epsilon in $C^{\frac{1}{2\alpha}\var}([0,T]; H^{-2})$:
    $$ 
    \sup_\epsilon \norm{R^{u^\epsilon}}_{C^{\frac{1}{2\alpha}\var}([0,T];H^{-2})}^{\frac{1}{2\alpha}} < \infty,
    $$
    completing the proof.
        \end{proof}
        
      We next state a result, which supplies a compact embedding needed for the proof. 
    \begin{lemma} \label{lem:cptembed}
        The following embedding is compact:
        \begin{equation*}
            L^\infty_T H \cap L^2_T H^1 \cap C^{\frac{1}{\alpha}\var}_T H^{-1} \hookrightarrow L^2_T H \cap C_T H_w \cap C_T H^{-1}.
        \end{equation*}
    \end{lemma}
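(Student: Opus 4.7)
Let $\{u_n\}$ be a sequence bounded in $L^\infty_T H \cap L^2_T H^1 \cap C^{\frac{1}{\alpha}\var}_T H^{-1}$. The plan is to extract, by a diagonal argument, a single subsequence that converges in each of the three target spaces. This reduces to three independent compactness statements, which I address in turn.

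\textbf{Step 1: Compactness in $L^2_T H$.} The embedding $H^1\hookrightarrow H$ is compact (Rellich--Kondrachov on the torus) and $H\hookrightarrow H^{-1}$ is continuous. The sequence is bounded in $L^2_T H^1$ and, since finite $\tfrac{1}{\alpha}$-variation implies uniform continuity, it is uniformly equicontinuous as a family of $H^{-1}$-valued paths. The Aubin--Lions--Simon lemma (in its $p$-variation formulation, or simply by interpolating the variation bound with $L^2_T H^1$) then yields relative compactness in $L^2_T H$.

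\textbf{Step 2: Compactness in $C_T H_w$.} Since $\{u_n(t)\}$ is uniformly bounded in $H$ (via the $L^\infty_T H$ bound), each slice is contained in a fixed closed ball of $H$, on which the weak topology is metrizable (as $H$ is separable). A convenient metric is $d(x,y) := \sum_k 2^{-k}(1\wedge |\langle x-y,e_k\rangle|)$ for a countable dense family $\{e_k\}\subset H^1$. For every $\phi\in H^1$,
\[
|\langle u_n(t)-u_n(s),\phi\rangle| \le \norm{u_n(t)-u_n(s)}_{H^{-1}}\norm{\phi}_{H^1}
\lesssim \norm{\phi}_{H^1}\,|t-s|^{\alpha}\sup_n\norm{u_n}_{C^{\frac{1}{\alpha}\var}_T H^{-1}},
\]
where I have used the embedding $C^{\frac{1}{\alpha}\var} \hookrightarrow C^{\alpha}$ via the uniform bound. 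Hence $\{u_n\}$ is equicontinuous in $H_w$. The Arzelà--Ascoli theorem in this metric space produces a subsequence converging in $C_T H_w$.

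\textbf{Step 3: Compactness in $C_T H^{-1}$.} The embedding $H\hookrightarrow H^{-1}$ is compact on the torus, so the slices $\{u_n(t)\}$ are relatively compact in $H^{-1}$ for each $t$. Equicontinuity in $H^{-1}$ is immediate from the variation bound, since $\norm{u_n(t)-u_n(s)}_{H^{-1}} \lesssim |t-s|^{\alpha}$ uniformly in $n$. A second application of Arzelà--Ascoli, now in the Banach space $H^{-1}$, yields a subsequence converging in $C_T H^{-1}$.

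Extracting a common subsequence (which must have a unique limit, identified as the weak-$H$ limit in Step~2) completes the proof. The only non-routine point is handling $C_T H_w$, where one must be careful that the weak topology is a genuine topological group only on bounded sets; the metric trick above sidesteps this and lets the classical Arzelà--Ascoli apply verbatim.
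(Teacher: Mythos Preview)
Your Steps 2 and 3 hinge on the claim that $C^{\frac{1}{\alpha}\var}\hookrightarrow C^\alpha$, but this inclusion goes the other way: $\alpha$-H\"older paths have finite $\tfrac{1}{\alpha}$-variation, not conversely. A uniform $p$-variation bound does \emph{not} yield equicontinuity. Concretely, set $u_n(t)=\psi\bigl(n(t-\tfrac12)\bigr)\,v$ with $v\in H^1$ fixed and $\psi:\mathbb R\to[0,1]$ smooth, monotone, $\psi(0)=\tfrac12$. Then $\{u_n\}$ is bounded in $L^\infty_TH\cap L^2_TH^1$, and since the scalar path is monotone its $p$-variation is at most $1$, so $\sup_n\|u_n\|_{C^{1/\alpha\var}_TH^{-1}}\le\|v\|_{H^{-1}}$. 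Yet $\|u_n(\tfrac12)-u_n(\tfrac12-\tfrac1n)\|_{H^{-1}}\to\tfrac12\|v\|_{H^{-1}}\neq0$, so equicontinuity fails and Arzel\`a--Ascoli cannot be invoked; in fact this family has no subsequence converging in $C_TH^{-1}$ or $C_TH_w$. Your Step~1 survives (the translation estimate in Simon's criterion does hold for this family), but the two uniform-convergence conclusions do not follow from the stated hypotheses alone.

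The paper itself does not give a proof; it defers to Appendix~A of \cite{NSpertbyroughtransport} (Lemmas~A.2--A.3) and describes the result as an Aubin--Lions adaptation to $p$-variation spaces. In that reference, and in the paper's actual application, the relevant increments are dominated not merely by a $p$-variation norm but by \emph{explicit common controls}: $\omega_A(s,t)=K|t-s|$ and $\omega_\mu(s,t)=\int_s^t(1+\|u_r\|_{H^1})^2\,dr$, whose uniform continuity at the diagonal (via uniform integrability of $\|u^\epsilon\|_{H^1}^2$) is what supplies the missing equicontinuity. To repair your argument you must import this extra structure---a single control dominating all members of the family---or else reformulate the lemma with that hypothesis added; the bare $C^{\frac{1}{\alpha}\var}$ bound is insufficient, as the counterexample shows.
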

    Proof of this can be found in Appendix A in \cite{NSpertbyroughtransport}. This particular statement is a combination of 
    simplified versions of Lemmas A2 and A3 therein. This statement can be seen as an adaptation of the classical Aubin-Lions 
    compactness Lemma to the $p$-variation spaces.
    
    Finally, we are in the position to present the proof of our main result.
 \begin{theorem} \label{general-ThmB}
 Consider the setting of Lemma~\ref{le:uniform-estimate}.
     Then the  following statements hold.
    \begin{itemize}
        \item[(i)] There exists a subsequence $\{ u^{\epsilon_n}\}$ such that $u^{\epsilon_n} \rightarrow u$ in  $L^{2}([0,T];H) \cap C([0,T],H_{w})$, 
        and the limit $u$ is a rough path solution to \eqref{eq:limit-eq}
        \begin{equation*}
            \partial_t u = \nu \Delta u- \Pi[(u \cdot \nabla) u] - \Pi[(d \mathbf{B} \cdot \nabla) u],
        \end{equation*}
        in case of (\ref{sf}) and in case of \eqref{sf2}, it solves 
         \begin{equation*}
            \partial_t u = \nu \Delta u- \Pi[(u \cdot \nabla) u] - \Pi[(d \mathbf{Z} \cdot \nabla) u].
        \end{equation*}
        
        \item[(ii)] The convergence $\{ u^{\epsilon_n}\}$ also holds in an analog of a controlled rough path distance, meaning:
        $$ u^{\epsilon_n} \rightarrow u \text{ in } C^{\frac{1}{\alpha-\delta} \var}([0,T]; H^{-1})$$
        $$ R^{u^{\epsilon_n}} \rightarrow R^u \text{ in } C^{\frac{1}{2(\alpha-\delta)}\var}([0,T]; H^{-2})$$
        hold for any $\delta>0$.
    \end{itemize}
\end{theorem}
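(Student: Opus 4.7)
The plan is to extract a subsequential limit by a compactness/interpolation argument, pass to the limit in the URD weak formulation (which is more convenient than the controlled rough path formulation since it is linear in the driver), and then invoke Theorem~\ref{thm:equivalence-of-sol} to upgrade the URD solution to a rough path solution. The convergence in controlled rough path topology is an immediate consequence of uniform variation bounds combined with Lemma~\ref{lem:pvarcptness}.

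\textbf{Compactness and extraction of a subsequence.} The energy estimate \eqref{energyestimate} gives uniform control in $L^\infty_T H\cap L^2_T H^1$, while Lemma~\ref{le:uniform-estimate} provides the uniform bounds $\sup_\epsilon \norm{u^\epsilon}_{C^{1/\alpha\var}_T H^{-1}} + \sup_\epsilon \norm{R^{u^\epsilon}}_{C_2^{1/(2\alpha)\var}_T H^{-2}} < \infty$ almost surely. Lemma~\ref{lem:cptembed} then yields a subsequence $u^{\epsilon_n}$ converging in $L^2_T H \cap C_T H_w \cap C_T H^{-1}$; by a diagonal argument we may further assume $u^{\epsilon_n} \to u$ uniformly in $H^{-1}$. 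Combining the uniform $\tfrac{1}{\alpha}$-variation bound with uniform $C_T H^{-1}$ convergence and the interpolation in Lemma~\ref{lem:pvarcptness}, we obtain $u^{\epsilon_n}\to u$ in $C^{1/(\alpha-\delta)\var}([0,T];H^{-1})$ for every $\delta>0$.

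\textbf{Convergence of the remainders.} By Assumption~\ref{ass:Y} (or Theorem~\ref{thm:A} for the case of \eqref{X4}), $X^{\epsilon_n}\to X$ in $C^{\alpha}_T H^\sigma$. Writing $R^{u^\epsilon}_{s,t} = u^\epsilon_{s,t} + \Pi[(X^\epsilon_{s,t}\cdot\nabla) u^\epsilon_s]$, the convergence of both factors in the transport term (as elements of $H^\sigma$ and $H^{-1}$ respectively, using that $H^\sigma\hookrightarrow W^{2,\infty}$ handles the derivative) gives pointwise convergence of $R^{u^{\epsilon_n}}$ to a limit two–parameter process which we identify with $R^u$. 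Applying Lemma~\ref{lem:pvarcptness} again in the scale $H^{-2}$ with the uniform $\tfrac{1}{2\alpha}$-variation bound upgrades this to convergence in $C_2^{1/(2(\alpha-\delta))\var}([0,T];H^{-2})$. In particular $R^u$ has finite $\tfrac{1}{2\alpha}$-variation with values in $H^{-2}$ by lower semicontinuity.

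\textbf{Passing to the limit in the URD equation.} It is convenient to work with the URD formulation \eqref{eq:URDweak_formulation}, since each term is linear in the driver $(A^{1,\epsilon},A^{2,\epsilon})$. Test against $\phi\in H^3$ and examine each term. The linear terms $\langle u^{\epsilon_n}_{s,t},\phi\rangle$ and $\int_s^t \langle \nu u^{\epsilon_n}_r,\Delta\phi\rangle\,dr$ pass to the limit directly from the strong $L^2_T H$ and $C_T H^{-1}$ convergences. The driver terms $\langle A^{j,\epsilon_n}_{s,t}u^{\epsilon_n}_s,\phi\rangle$ pass to the limit because they can, by integration by parts and the divergence-free property, be written as $\langle u^{\epsilon_n}_s, \Pi(X^{\epsilon_n}_{s,t}\cdot\nabla\phi)\rangle$ and $\langle u^{\epsilon_n}_s, \Pi(\mathbb{X}^{\epsilon_n,\,T}_{s,t}\,\cdot\,\nabla\Pi(\cdot\nabla\phi))\rangle$, pairing a strongly convergent factor in $H^{-1}$ (resp.\ $H$) with a strongly convergent factor in $H^1$ (resp.\ $H^2$) obtained from the rough path convergence in $H^\sigma$. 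The main obstacle is the nonlinear Navier–Stokes term $\int_s^t \langle u^{\epsilon_n}_r\otimes u^{\epsilon_n}_r, \nabla\Pi\phi\rangle\,dr$: here we use the classical split $u^{\epsilon_n}\otimes u^{\epsilon_n} - u\otimes u = (u^{\epsilon_n}-u)\otimes u^{\epsilon_n} + u\otimes (u^{\epsilon_n}-u)$ together with strong $L^2_T L^2$ convergence of $u^{\epsilon_n}$ and the uniform $L^2_T L^6$ bound from $L^2_T H^1$ (via Sobolev embedding in dimension three), which makes the quadratic form continuous when tested against $\nabla\Pi\phi\in L^\infty$. Finally, the residual $u^{\epsilon_n,\natural}$ has uniformly bounded $\tfrac{1}{3\alpha}$-variation norm in $H^{-3}$ by Proposition~\ref{prop:residualest} (applied with the uniform bounds on $K^{\epsilon_n}$, $\norm{u^{\epsilon_n}}_{L^\infty_T H}$, and $\omega_{\mu^{\epsilon_n}}$), and it converges in the sense of distributions to the residual $u^\natural$ associated to $u$ and $(X,\mathbb{X})$; lower semicontinuity of the $\tfrac{1}{3\alpha}$-variation norm shows that $u^\natural \in C_2^{1/(3\alpha)\var}([0,T];H^{-3})$. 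Thus $u$ is a URD solution to the limit equation on the whole interval $[0,T]$, and Theorem~\ref{thm:equivalence-of-sol} promotes it to a rough path solution in the sense of Definition~\ref{def:RPsolution}.
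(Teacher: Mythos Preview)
Your proof is correct and structurally very close to the paper's: Steps 1 and 2 (compactness via Lemma~\ref{lem:cptembed}, variation-topology convergence via Lemma~\ref{lem:pvarcptness}) are identical. The one substantive difference is in Step 3: the paper's \emph{main} proof passes to the limit directly in the rough path weak formulation \eqref{eq:weak_formulation}, using that the rough integral $\int_s^t\langle \Pi(d\mathbf{X}^{\epsilon_n}_r\cdot\nabla)u^{\epsilon_n}_r,\phi\rangle$ is continuous in the controlled pair $(u^{\epsilon_n},R^{u^{\epsilon_n}})$ and in the rough path $\mathbf{X}^{\epsilon_n}$ (continuity of the sewing/rough integral, \cite[Thm.~4.17]{FH}); it thus obtains a rough path solution without ever invoking Theorem~\ref{thm:equivalence-of-sol}. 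Your route---pass to the limit in the URD formulation \eqref{eq:URDweak_formulation} term by term, use lower semicontinuity for the residual's $\tfrac{1}{3\alpha}$-variation, then upgrade via Theorem~\ref{thm:equivalence-of-sol}---is exactly the alternative argument the paper records in the Remark immediately following the main proof.

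Two small comments. First, the ``diagonal argument'' for uniform $H^{-1}$ convergence is unnecessary: Lemma~\ref{lem:cptembed} already delivers convergence in $C_TH^{-1}$. Second, when you apply Lemma~\ref{lem:pvarcptness} to the remainders you should say \emph{uniform} rather than ``pointwise'' convergence of $R^{u^{\epsilon_n}}$ (which you do have, since both factors in $\Pi[(X^{\epsilon_n}_{s,t}\cdot\nabla)u^{\epsilon_n}_s]$ converge uniformly). Also, Proposition~\ref{prop:residualest} is local in time ($|t-s|\le L$), so strictly speaking you obtain $u^\natural\in C_2^{1/(3\alpha)\var}$ on intervals of uniform length $L$; this is precisely what the URD definition requires, and the global statement then follows from Theorem~\ref{thm:equivalence-of-sol}.
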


\begin{proof}
    We will carry out the proof in three steps. We consider the case of \eqref{sf}, the other case can be proved verbatim with
     $\mathbf X^\epsilon$ replaced by $\mathbf Z^\epsilon$ and $\mathbf B$ by $\mathbf Z$.
    In the first step, we establish a collection of uniform in epsilon bounds that will be used to obtain 
    pre-compactness of the family $\{ u^\epsilon\}$ in $L^2_T H \cap C_T H_w \cap C_T H^{-1}$. In the second step we show the convergence stated in part $(ii)$.
    Finally, we complete the proof of $(i)$ by identifying the limit points as solutions to \eqref{eq:limit-eq}.

    {\bf Step 1}:
    In Lemma~\ref{le:uniform-estimate}, we obtained a uniform bound of the form:
            \begin{equation*}
    \sup_\epsilon  \norm{u^\epsilon}_{C^{\frac{1}{\alpha}\var}([0,T];H^{-1})}^{\frac{1}{\alpha}} <\infty.    \end{equation*}
Along with the classical energy estimate from Lemma~\ref{energyestimate}, we have a uniform in $\epsilon$ bound in the norm of
$L^\infty_T H \cap L^2_T H^1 \cap C^{\frac{1}{\alpha}\var}_T H^{-1} $.
Applying Lemma~\ref{lem:cptembed} we obtain a convergent subsequence in $L^2_T H \cap C_T H_w \cap C_T H^{-1}$.

    {\bf Step 2}: Now we establish convergence in $(ii)$ using Lemma~\ref{lem:pvarcptness}, 
    which states that the convergence in the rough path topology follows if we have a uniform bound in a larger $p$-variation norm and some uniform convergence.  The required bounds in finite variation spaces are contained in Lemma~\ref{le:uniform-estimate}. 
     To apply Proposition~\ref{lem:pvarcptness} we show that the subsequence from Step 1 converges uniformly to a limit $u$    in $H^{-1}$, and $R^{u^{\epsilon_n}} \rightarrow R^u$ in $H^{-2}$. The convergence of $u$ in Step 1 is already uniform. For the remainder term, we note:
    \begin{equation*}
        R^{u^{\epsilon_n}}_{s,t} = u^{\epsilon_n}_{s,t} + \Pi \left[ X^{\epsilon_n}_{s,t} \cdot u^{\epsilon_n}_s\right].
    \end{equation*}
    Now, as the path $X^{\epsilon_n}$ converges in $C^\alpha([0,T]; H^\sigma)$, it also converges uniformly in $H^\sigma$. This along with uniform convergence
    of $u^{\epsilon_n}$ in $H^{-1}$, implies the desired uniform convergence of $R^{u^{\epsilon_n}}$.
The uniform bounds of $u^\epsilon$ and $R^u$ in their respective variational topology together with the convergence concludes part $(ii)$.

By the deterministic energy bounds, as shown in Step~1, any limit point of the family $\{u^\epsilon\}$ belongs to the space $L^2_TH^1 \cap C_TH_w$. The convergence in  {Step 2} imply that the limit $u \in C^{\frac{1}{\alpha}\var}([0,T]; H^{-1})$ and $R^u \in C^{\frac{1}{2\alpha}\var}([0,T]; H^{-2})$.

    {\bf Step 3}: 
 To show that the limit is a rough path solution, it remains to show that it satisfies the weak formulation \eqref{eq:weak_formulation}.
 Firstly,  for any $\phi\in H^\sigma$,  $u^{\epsilon_n}$ satisfies:
    \begin{equation}
        \< u^{\epsilon_n}_{s,t}, \phi \> = \int_s^t \<\nu \Delta u^{\epsilon_n}_r, \phi  \> dr - \int_s^t \< \Pi(u^{\epsilon_n}_r\cdot \nabla u^{\epsilon_n}_r), \phi \> dr 
        -\int_s^t \< \Pi(d{\bf X}^{\epsilon^n}_r \cdot \nabla )u^{\epsilon_n}_r ,\phi \>.
    \end{equation}
    Using the convergence established above, we see:
    \begin{align*}
     & |  \< u^{\epsilon_n}_{s,t}, \phi \> - \< u_{s,t}, \phi \>|\le \norm{u^{\epsilon_n}_{s,t} - u_{s,t}}_{H^{-3}} \norm{\phi}_{H^3}\,\\
      & \Bigl|  \int_s^t \<\nu \Delta u^{\epsilon_n}_r, \phi  \> dr - \int_s^t \<\nu \Delta u_r, \phi  \> dr\Bigr|\lesssim \int_s^t \norm{u^{\epsilon_n}_r-u_r}_{H^{-1}}\norm{\phi}_{H^3},\\
      &{} \Bigl| \int_s^t \< u^{\epsilon_n}_r\cdot \nabla u_r^{\epsilon_n} -u\cdot \nabla u , \phi \> dr \Bigr| 
       \le \int_s^t \norm{u^{\epsilon_n}_r-u_r}_{H} \; \norm{u_r^{\epsilon_n} \cdot \nabla \phi }_{H}dr+\int_s^t \norm{u_r}_H\norm{ (u_r^{\epsilon_n}-u_r) \cdot \nabla \phi}_Hdr\\
       & \lesssim  \norm{u^{\epsilon_n}-u}_{L^2([s,t];H)}  \norm{\phi}_{H^3} \norm{u^{\epsilon_n}}_{L^2([s,t];H)} + \norm{u}_{L^2([s,t];H)} \norm{\phi}_{H^3} \norm{u^{\epsilon_n}-u}_{L^2([s,t];H)}.  
    \end{align*}
 The convergence follows from the embedding indicated in Lemma~\ref{lem:cptembed}.
The convergence of the final, rough integral term follows from the continuity of rough integration
    (see Theorem 4.17 in \cite{FH}) in the controlled setting. 
    Recalling the notation from Proposition \ref{rem:distrvstested}, replacing $u$ by $u^{\epsilon_n}$ and $\mathbf Z$ by 
    ${\bf X}^{\epsilon_n}$, we have:
    \begin{align*}
        \left| \int_s^t \< \Pi(d{\bf X}^{\epsilon_n}_r \cdot \nabla )u^{\epsilon_n}_r ,\phi \> - \int_s^t \< \Pi(d{\bf B}_r \cdot \nabla )u_r ,\phi \>  \right| &\lesssim \norm{\phi}_{H^3} \Bigl( \norm{u^{\epsilon_n} -u }_{C^{\frac{1}{\alpha} \var}([0,T];H^{-1})} \\ &+ \norm{R^{u^{\epsilon_n}} -R^u}_{C_2^{\frac{1}{2\alpha}\var}([0,T];H^{-2})} + 
        \rho_\alpha({\bf X}^{\epsilon_n}, {\bf B})\Bigr).
    \end{align*}
  By part Step 2 of the proof, and Theorem \ref{thm:A}, the above convergence is justified. This allows us to pass to the limit in the weak formulation and finishes the proof.
    \end{proof}

\begin{remark}
    By theorem \ref{thm:equivalence-of-sol}, the Rough Path, and URD solutions are equivalent. We nevertheless provide the proof of
    the last step in the URD setting as well.
    Let us recall \eqref{eq:URDweak_formulation}:
    \begin{equation*}
        \< u^\epsilon_{s,t}, \phi \> = \int_s^t \<\nu \Delta u^\epsilon_r, \phi  \> dr + \int_s^t \<u^\epsilon_r \cdot \nabla u^\epsilon_r, \phi \> dr + 
        \< A^{1,\epsilon}_{s,t}u_s,\phi \> + \< A^{2,\epsilon}_{s,t}u_s, \phi\> + \< u^{\natural,\epsilon}_{s,t}, \phi \>.
    \end{equation*}
    First we will justify the pointwise in time convergence in $H^{-3}$ of all terms,
    and then we will assert that the limit remainder is of the required regularity. The convergence of the first three terms are as in the proof above.
    Recalling the notation from Proposition \ref{rem:distrvstested}, replacing $u$ by $u^{\epsilon_n}$ and $\mathbf Z$ by 
    ${\bf X}^{\epsilon_n}$, we have:
\begin{equation*}
    \< A^{1,\epsilon_n}_{s,t}u_s,\phi \> = y^{\epsilon_n}_s X^{\epsilon_n}_{s,t}, \qquad \< A^{2,\epsilon_n}_{s,t}u_s, \phi\> = y^{\prime, \epsilon_n}_s \mathbb{X}^{\epsilon_n}_{s,t}.
\end{equation*}
Then the convergence of these terms is once again deduced from Theorem~\ref{thm:A}, and convergence of $u^{\epsilon_n}$.
     This in particular means that the residual converges as well.
    Let us define $u^\natural_{s,t} = \lim_{\epsilon \rightarrow 0}u_{s,t}^{\natural,\epsilon}$ in $H^{-3}$.
       We also denote:
    \begin{equation*}
        \omega_{\natural,\epsilon}(s,t):= \norm{u^{\natural,\epsilon}}^{\frac{1}{3\alpha}}_{C^{\frac{1}{3\alpha} - var}([s,t];H^{-3})}.
    \end{equation*}

    We now  justify the regularity in time in the sense of local finite $\frac{1}{3\alpha}$-variation.
By \eqref{est:residual}, for $|t-s|\leq L$ for $L$ some random constant:
    \begin{align*}
        \norm{u^{\epsilon, \natural}}^{\frac{1}{3\alpha}}_{C^{\frac{1}{3\alpha} - var}([s,t];H^{-3})} = \omega_{\epsilon, \natural}(s,t) &\lesssim 
        \norm{u^{\epsilon}}_{L^{\infty}_{T}H}^{\frac{1}{3\alpha}} \omega_{A}(s,t) +
        \omega_{\mu^{\epsilon}}^{\frac{1}{3\alpha}}(s,t)\omega_{A}^{\f 13}(s,t) \\
        &\lesssim \norm{u^{\epsilon}_{0}}^{\f 23\alpha}_{H} \leq M
    \end{align*}
where M is some random constant independent of $\epsilon$. Consequently:
\begin{align*}
    |u^{\natural}_{s,t}(\phi)|^{\frac{1}{3\alpha}}
    = \lim_{\epsilon \rightarrow 0}|u^{\natural, \epsilon}_{s,t}(\phi)|^{\frac{1}{3\alpha}} \leq \norm{\phi}_{H^{3}}^{\frac{1}{3\alpha}}\;
    \liminf_{\epsilon \rightarrow 0}
    \norm{u^{\epsilon, \natural}}^{\frac{1}{3\alpha}}_{C^{\frac{1}{3\alpha} - var}([s,t];H^{-3})} 
    \leq \norm{\phi}_{H^{3}}^{\frac{1}{3\alpha}} \;\liminf_{\epsilon \rightarrow 0}
    \omega_{\epsilon, \natural}(s,t).
\end{align*}
Hence, any $(s,t)$ with $|t-s|\leq L$ and a partition $\pi$ of this interval $[s,t]$ we have:
\begin{align*}
    \sum_{[u,v]\in \pi}\norm{u^{ \natural}_{u,v}}^{\frac{1}{3\alpha}}_{H^{-3}} \leq \liminf_{\epsilon \rightarrow 0}  \sum_{[u,v]\in \pi}
    \omega_{\epsilon, \natural}(u,v) \leq \liminf_{\epsilon \rightarrow 0} \omega_{\epsilon, \natural}(s,t) \leq M.
\end{align*}
Taking supremum over all partitions we obtain $\norm{u^{\natural}}^{\frac{1}{3\alpha}}_{C^{\frac{1}{3\alpha} - var}([s,t];H^{-3})}\lesssim M$.
This shows that $u^{\natural}$ is of finite $\frac{1}{3\alpha}$-variation on intervals of length $L$. Hence, 
$u$ is a URD solution to \eqref{eq:limit-eq}, so also a rough path solution.
\end{remark}
\section{Discussion on non-Gaussian inputs}

In principle it is possible to extend the main results to the non-Gaussian Hermite process driven fast motion. They are self similar, have stationary increments and live on a fixed Wiener chaos. 
Hermite processes of higher order are defined using kernel representations. For $H>\f 12$,  see e.g. \cite{tudor-hermite},
this is:
$$Z_t^{H,m}=\int_{\R^m} \int_0^t \Pi_{j=1}^m (s-\xi_j)_+^{H_0-\f 32} ds \;dW_{\xi_1}\dots dW_{\xi_m},$$
in which case the theory of rough paths is not needed, and our arguments work exactly as for the case of fBM with $H\in(\frac{1}{2}, 1)$.

 The more interesting case is for $H<\frac{1}{2}$.  A class of generalised Hermite processes is defined  in \cite{Assaad-HOU}:
 \begin{align*}
    X^{(q,H,\beta)}_{t} &= C(q,H,\beta)\int_{\mathbb{R}^q} \left[ \int_{\mathbb{R}} g_t^\beta(u)\prod_{i=1}^q 
    (u-y_i)_{+}^{-(\frac{1}{2}+\frac{1-H}{q})} du \right] \times dB(y_1) \dots dB(y_q),
\end{align*}
where $g_t^\beta(u)= (t-u)_+^\beta - (-u)_+^\beta$ for $\beta \neq 0$.
The parameters are chosen so that $H+\beta \in (0,1)$ and $H\in (\frac{1}{2}, 1)$. 

The process $ X^{(q,H,\beta)}$ has stationary increments,
and is self-similar of order $H+\beta$, so its covariance function coincides with that of fBM with Hurst parameter $H+\beta$. An application 
of Kolmogorov criterion guarantees existence of a version with almost sure H\"older continuous paths of order $(H+\beta)-$.

One can then ask for a solution to the Langevin equation driven by this process:
$$ dw_t = -c w_t dt + \lambda d X^{(q,H,\beta)}_t, $$ 
which admits a unique solution with an explicit expression $$ w_t = w_0e^{-ct} + \sigma \int_0^t e^{-c(t-s)} d X^{(q,H,\beta)}_s, $$ with the integral understood in
the Riemann-Stieltjes sense (cf Chapter 3.3 in \cite{tudor-hermite}). Moreover, there exists a stationary solution, given by:
$$ w_t = \sigma \int_{-\infty}^{t}e^{-c(t-s)}d X^{(q,H,\beta)}_s. $$
Crucial for our application is that the covariance of this process enjoys the same asymptotics as the fOU process:
$\E[w_{t+h}w_t]\sim h^{2(H+\beta) -2}$ as $h \to \infty$, see \cite[Prop. 6]{Assaad-HOU}.

Let $w^\epsilon$ be the rescaled stationary HOU process, defined as:
\begin{equation}
    dw^\epsilon_t = -\epsilon^{-1} M w^\epsilon_t dt + 
   \epsilon^{-H} Q^{\f 12} dX^{(q,H,\beta)}_t,
\end{equation}
with the equation understood componentwise and where $ Q^{\frac{1}{2}} X^{(q,H,\beta)}_t= \sum_{i \geq 1}\sqrt{\lambda_i}e_i Q_t^{H,i}$
and $\{Q_t^{H,i}\}$ are independent copies.

\medskip

{\it Conjecture.} 
\label{lemma-Hermite} 
Let $M$ and $Q$ satisfy Assumption \ref{assumption}, $H\in \left(\frac{1}{3}, \frac{1}{2}\right)$, and
   \begin{equation*}
       X_{s,t}^\epsilon=\epsilon^{H-1} \int_s^t  w_r^\epsilon dr, \quad \mathbb{X}_{s,t}^\epsilon = 
       \int_s^t X_{s,r}^\epsilon \otimes dX_{r}^\epsilon
   \end{equation*}
Then  $(X^{\epsilon}_{\cdot}, \mathbb{X}^{\epsilon}_{ \cdot\cdot})$ converges in $L^{q}(\Omega;\mathcal{C}^{\alpha}
   ([0,T];H^{\sigma}))$ to $(Z,\mathbb{Z})$ for all $q>1$,  and $\frac{1}{3}<\alpha< H$.

\medskip

Suppose that the conjecture holds, $(X^{\epsilon}_{\cdot}, \mathbb{X}^{\epsilon}_{ \cdot\cdot})$ converges, there is only one possibility for the limit.
Firstly, on the first level we have $Z= M^{-1}Q^{\frac{1}{2}}X^{(q,H,\beta)}$. In fact,  for the Young integral case, a functional CLT is known \cite{GehPhD} for 1d processes. 

Our focus is on generalised Hermite processes with regularity in $(\f 13, \f 12)$ and their rough path lifts.
The lift can be constructed 
using the same procedure as for a Gaussian lift. This is not obvious, as the generalised Hermite process is not Gaussian, but
we can still construct a lift. We note that the construction of iterated integrals as presented in Section \ref{subsec:pf-thmA}
applies provided we can apply the necessary properties of the covariance.

To guarantee the finite $2$-d variation of the covariance for $X^{(q,H,\beta)}$, one can use Lemma 10.8 and Theorem 10.9 in \cite{FH},
which never use the assumed Gaussianity. Then we essentially apply Gaussian lift theorem, using hypercontractivity instead of 
Gaussianity to argue equivalence of moments, to deduce the H\"older regularity.

Inspecting the proof of {Theorem~\ref{thm:CQ}} 
provided in \cite{FVroughpaths}, we see 
that Gaussianity is actually never used in this result. For this reason, this Theorem still applies to the components of the
processes $X_{s,t}^\epsilon$ $M^{-1}Q^{\frac{1}{2}}X^{(q,H,\beta)}$, and $w$. Thanks to identical covariance asymptotics of HOU and fOU, verifying 
assumptions of \ref{thm:CQ} for $X_{s,t}^\epsilon$ follows exactly the same as before.

We are left to deal with the covariance of $w$.  We expect:

{\it Conjecture 2.} $$  \norm{R_{X^{\epsilon, i}- M^{-1}Q^{\f 12}X^{(q,H,\beta,)}_{t}}}_{\frac{1}{2H}\var} \lesssim \lambda_i |t-s|^{2H}. $$

Conjecture 2 implies Conjecture 1.

If Conjecture 1 holds, i.e. Theorem \ref{thm:A} extends to the Hermite case, the corresponding Theorem~\ref{thm:B} follows at once, from \ref{general-ThmB}. 
The limit equation is again of the form \eqref{eq:modeleqn}  driven by a Hermite OU fast process. 
For the proof, we use Theorem~\ref{lemma-Hermite} instead of Theorem~\ref{thm:A}.

\newcommand{\etalchar}[1]{$^{#1}$}

\end{document}